\numberwithin{equation}{section}
\newtheorem{theorem}{Theorem}[section]
\newtheorem{lemma}[theorem]{Lemma}
\newtheorem{proposition}[theorem]{Proposition}
\newtheorem{corollary}[theorem]{Corollary}
\theoremstyle{definition}
\newtheorem{definition}[theorem]{Definition}
\newtheorem{example}[theorem]{Example}
\begin{document}


\title[]{Regularity and algebraic properties of certain \\ lattice ideals} 

\thanks{The first author was partially funded by CMUC and FCT
(Portugal), through 
European program COMPETE/FEDER and project PTDC/MAT/111332/2009, and by a
research grant from Santander Totta Bank (Portugal). The second
author is a member of the Center for Mathematical Analysis, Geometry,
and Dynamical Systems,  
Departamento de Matematica, Instituto Superior Tecnico, 1049-001 Lisboa, 
Portugal. The third author was partially supported by SNI}

\author{Jorge Neves}
\address{CMUC, Department of Mathematics, University of Coimbra
3001-454 Coimbra, Portugal.
}
\email{neves@mat.uc.pt}

\author{Maria Vaz Pinto}
\address{
Departamento de Matem\'atica\\
Instituto Superior T\'ecnico\\
Universidade T\'ecnica de Lisboa\\
Avenida Rovisco Pais, 1\\
1049-001 Lisboa, Portugal.
}
\email{vazpinto@math.ist.utl.pt}

\author{Rafael H. Villarreal}
\address{
Departamento de
Matem\'aticas\\
Centro de Investigaci\'on y de Estudios
Avanzados del
IPN\\
Apartado Postal
14--740 \\
07000 Mexico City, D.F.
}
\email{vila@math.cinvestav.mx}


\keywords{Lattice ideals, regularity, degree, complete intersections,
vanishing ideals}
\subjclass[2010]{Primary 13F20; Secondary 13P25, 14H45, 11T71.} 

\dedicatory{Dedicated to Professor Aron Simis on the occasion of his
$70$th birthday}

\begin{abstract} We study the regularity and the algebraic properties
of certain lattice ideals. 
We establish a map $I\mapsto \widetilde{I}$ between
the family of graded lattice ideals in an $\mathbb{N}$-graded
polynomial ring over a field $K$ and the family of graded lattice
ideals in a polynomial ring with the 
standard grading. This map is shown to preserve the complete
intersection property and the regularity of $I$ but not the degree.
We relate 
the Hilbert series and the generators of $I$ and
$\widetilde{I}$. If $\dim(I)=1$, we relate the degrees of
$I$ and $\widetilde{I}$. It is shown that the regularity of certain
lattice ideals is additive in a certain sense. Then, we give some
applications. For finite fields, we give a formula for the regularity
of  the vanishing ideal of a degenerate torus in terms of the
Frobenius number of a semigroup. We construct vanishing ideals, over
finite fields, with prescribed regularity and degree of a certain
type. Let $X$ be a subset of a projective space over a field $K$. It
is shown that the vanishing 
ideal of $X$ is a lattice ideal of dimension $1$ if and only if $X$ is
a finite subgroup of a projective torus. For finite fields, it is
shown that $X$ is a  subgroup of a projective torus if and only if $X$
is parameterized by monomials. We express the regularity of
the vanishing ideal over a bipartite graph in terms of the  
regularities of 
the vanishing ideals of the blocks of the graph.
\end{abstract}

\maketitle 

\section{Introduction}\label{intro-ci-mcurves}

Let $S=K[t_1,\ldots,t_s]=\oplus_{d=0}^\infty
S_d$ and $\widetilde{S}=K[t_1,\ldots,t_s]=\oplus_{d=0}^\infty 
\widetilde{S}_d$ be polynomial rings, over a field $K$, with the
gradings induced by setting $\deg(t_i)=d_i$ for
all $i$ and $\deg(t_i)=1$ for all $i$, respectively, where
$d_1,\ldots,d_s$ are positive integers. 
Let $F=\{f_1,\ldots,f_s\}$ be a set
of algebraically independent 
homogeneous polynomials of 
$\widetilde{S}$ of degrees $d_1,\ldots,d_s$ and let 
$$\phi\colon S\rightarrow K[F]$$ 
be the isomorphism of 
$K$-algebras given by $\phi(g)=g(f_1,\ldots,f_s)$, where
$K[F]$ is the $K$-subalgebra of $\widetilde{S}$ generated by
$f_1,\ldots,f_s$. For convenience we
denote $\phi(g)$ by $\widetilde{g}$. Given a graded ideal 
$I\subset S$ generated by $g_1,\ldots,g_m$, we associate to $I$ the
graded ideal $\widetilde{I}\subset \widetilde{S}$ generated by
$\widetilde{g}_1,\ldots,\widetilde{g}_m$ and call
$\widetilde{I}$ the {\it homogenization} of $I$ with respect to 
$f_1,\ldots,f_s$. The ideal $\widetilde{I}$ is independent of the generating 
set $g_1,\ldots,g_m$ and $\widetilde{I}$ is a graded ideal with
respect to the standard grading of $K[t_1,\ldots,t_s]$. 

If $f_i=t_i^{d_i}$ for $i=1,\ldots,s$, the map
$I\mapsto\widetilde{I}$ induces a correspondence between the family of
graded lattice ideals of $S$ and the family of graded lattice ideals of
$\widetilde{S}$. The first aim of this paper is to study this correspondence
and to relate the algebraic invariants (regularity and degree) and
properties of $I$ and $\widetilde{I}$ (especially the complete
intersection property). For finite fields, the interest in this
correspondence comes  
from the fact that any vanishing ideal
$I(X)\subset\widetilde{S}$, 
over a degenerate projective
torus $X$, arises as a toric ideal $I\subset S$ of a monomial curve, i.e.,
$\widetilde{I}=I(X)$ for some graded toric ideal $I$ of $S$  of dimension $1$ 
(see \cite[Proposition~3.2]{ci-mcurves}). In this paper, we extend
the scope of \cite{ci-mcurves} to include lattice ideals of arbitrary
dimension. The second aim of this paper is to use our methods to 
study the regularity of graded vanishing ideals and to give classifications of this
type of ideals. The algebraic invariants (degree, regularity) and the
complete intersection property of vanishing ideals over finite fields, are
of interest in algebraic coding theory \cite{GRT, algcodes,sorensen} and commutative
algebra
\cite{stcib-algorithm,bermejo-gimenez,Eisen,FMS,geramita-cayley-bacharach,
morales-thoma,Vas1}. The length, dimension and minimum distance of evaluation codes
arising from complete 
intersections have 
been studied in \cite{duursma-renteria-tapia,gold-little-schenck,
hansen,cartesian-codes,ci-codes}.

The contents of this paper are as follows. In Section~\ref{prelim}, we 
introduce the notions of degree and index of regularity via Hilbert
functions. Lattices and their lattice ideals are also introduced in this
section.  We present some of the results
that will be needed throughout the paper. All the results of this 
section are well known. 

In Section~\ref{hsai}, we establish a
map $I\mapsto \widetilde{I}$ between the graded ideals of
$S$ and $\widetilde{S}$. We relate the minimal graded resolutions, the
Hilbert series, and the regularities of $I$ and $\widetilde{I}$. 
In general, the map $I\mapsto \widetilde{I}$ does not preserve the
height of $I$ (Example~\ref{dec30-12-4}). Let $I$ be a graded ideal of $S$ and
assume that 
$K[F]\subset\widetilde{S}$ is an integral extension. We show that 
$\dim(S/I)=\dim(\widetilde{S}/\widetilde{I})$ (Lemma~\ref{jul8-12}). 
Then, using the Buchsbaum-Eisenbud acyclicity criterion
\cite[Theorem~1.4.13]{BHer}, we show that if  
\[
\ \ \ \ \ \ \ \ \ \ \ \ \textstyle 0\rightarrow 
\bigoplus_{j=1}^{b_g} S(-a_{gj})
{\rightarrow}\cdots
\rightarrow\bigoplus_{j=1}^{b_1}
S(-a_{1j}){\rightarrow} S\rightarrow
S/I\rightarrow 0 
\]
is the minimal graded free resolution of $S/I$, then 
\[
\ \ \ \ \ \ \ \ \ \ \ \ 
\ \ \ \ \ \ \ \ \ \ \ \ \textstyle 0\rightarrow 
\bigoplus_{j=1}^{b_g} \widetilde{S}(-a_{gj})
{\rightarrow}\cdots
\rightarrow\bigoplus_{j=1}^{b_1}
\widetilde{S}(-a_{1j}){\rightarrow}
\widetilde{S}\rightarrow \widetilde{S}/\widetilde{I} \rightarrow 0
\]
is the minimal graded free resolution of
$\widetilde{S}/\widetilde{I}$ (Lemma~\ref{jul9-12}). 
By the {\it regularity} of $S/I$, denoted by ${\rm reg}(S/I)$, 
we mean the Castelnuovo-Mumford regularity. This notion is introduced in
Section~\ref{hsai}. We denote the Hilbert series of $S/I$  by
$F_I(t)$.

This close relationship between the graded resolutions of $I$ and
$\widetilde{I}$ allows us to relate the Hilbert series and the regularities of
$I$ and $\widetilde{I}$.
 
\noindent {\bf Theorem~\ref{aug23-12}}{\it\ Let $I$ be a graded ideal
of $S$, then ${\rm reg}(S/I)={\rm reg}(\widetilde{S}/\widetilde{I})$
and 

\ \ \ \ \ \ \ $F_{\widetilde{I}}(t)=\lambda_1(t)\cdots\lambda_s(t)F_I(t)$, where
$\lambda_i(t)=1+t+\cdots+t^{d_i-1}$. 
}

For the rest of the introduction we will assume that $f_i=t_i^{d_i}$ for
$i=1,\ldots,s$. Accordingly, $\widetilde{I}\subset\widetilde{S}$ will
denote the homogenization 
of a graded ideal $I\subset S$ with respect to
$t_1^{d_1},\ldots,t_s^{d_s}$. 

In Section~\ref{ci-section}, we examine the map $I\mapsto
\widetilde{I}$ between the family of graded
lattice ideals of $S$ and the family of graded lattice ideals of 
$\widetilde{S}$. Let $\mathcal{L}\subset\mathbb{Z}^s$ be a {\it 
homogeneous lattice\/}, with respect to $d_1,\ldots,d_s$, and let
$I(\mathcal{L})\subset S$ be its graded lattice ideal. 
It is well known that the height of $I(\mathcal{L})$ is the rank of
$\mathcal{L}$ (see for instance \cite[Proposition~7.5]{cca}). 
If $D$ is the
non-singular diagonal matrix ${\rm diag}(d_1,\ldots,d_s)$, then
$\widetilde{I(\mathcal{L})}$ is the lattice ideal of 
$\widetilde{\mathcal{L}}=D(\mathcal{L})$ and the height of
$I(\mathcal{L})$ is equal to 
the height of $I(\widetilde{\mathcal{L}})$. 

We come to the main result of Section~\ref{ci-section} that relates the 
generators of $I(\mathcal{L})$ and $\widetilde{I(\mathcal{L})}$. 

\noindent {\bf Theorem~\ref{dec22-11}}{\it\ Let
$\mathcal{B}=\{t^{a_i}-t^{b_i}\}_{i=1}^m$ be a set of binomials. If 
$\widetilde{\mathcal{B}}=\{t^{D(a_i)}-t^{D(b_i)}\}_{i=1}^m$, then
$I(\mathcal{L})=(\mathcal{B})$ if and only if
$I(\widetilde{\mathcal{L}})=(\widetilde{\mathcal{B}})$. 
}

Then, using the fact that ${\rm ht}(I(\mathcal{L}))={\rm
ht}(I(\widetilde{\mathcal{L}}))$, we show that $I(\widetilde{\mathcal{L}})$ is
a complete intersection if 
and only if $I(\mathcal{L})$ is a complete intersection
(Corollary~\ref{ci-i(x)-p}). 

In Section~\ref{lattice-dim1-section}, we study the restriction of the
map $I\mapsto\widetilde{I}$ to the family of graded lattice ideals of
dimension $1$. In this case, the degrees of $I$ and $\widetilde{I}$ are nicely
related.

\noindent {\bf Theorem~\ref{dec5-12}}{\it\ If $I$
is a graded lattice ideal of dimension $1$, 
then 
$$ 
\deg(\widetilde{S}/\widetilde{I})=\frac{d_1\cdots
d_s}{\max\{d_1,\ldots,d_s\}}\deg(S/I).
$$
}

Let $r=\gcd(d_1,\ldots,d_s)$ and let $\mathcal{S}$ be the numerical semigroup
$\mathbb{N}(d_1/r)+\cdots+\mathbb{N}(d_s/r)$. The {\it Frobenius number\/} 
of $\mathcal{S}$, denoted by $g(\mathcal{S})$, is the largest 
integer not in $\mathcal{S}$. The Frobenius
number occurs in many branches of mathematics and is one of the most
studied invariants in the theory of 
semigroups. A great deal of effort has been directed at the 
effective computation of this number, see the monograph of Ram\'\i
rez-Alfons\'\i n \cite{frobenius-problem}.

The next result gives an explicit formula for the
regularity of $\widetilde{I}$, in terms of the Frobenius number of
$\mathcal{S}$, when $I$ is the
toric ideal of a monomial curve. This formula can be used to
compute the regularity using some available algorithms 
to compute Frobenius numbers \cite{frobenius-problem}.

\noindent {\bf Theorem~\ref{reg-deg-mcurves}}{\it\  If $I$ is the toric ideal of
$K[y_1^{d_1},\ldots,y_1^{d_s}]\subset K[y_1]$, then 
$$
{\rm reg}(\widetilde{S}/\widetilde{I})=r\cdot
g(\mathcal{S})+1+\sum_{i=1}^s(d_i-1). 
$$
}
Let $\succ$ be the reverse lexicographical order. If $I$ is a graded lattice
ideal and $\dim(S/I)=1$, we show the following equalities
$$
{\rm reg}(S/I)={\rm reg}(\widetilde{S}/\widetilde{I})=
{\rm reg}(\widetilde{S}/{\rm in}(\widetilde{I}))
={\rm reg}(S/{\rm in}(I)),
$$
where ${\rm in}({I})$, ${\rm in}(\widetilde{I})$ are the initial
ideals of $I$, $\widetilde{I}$, with respect to $\succ$, respectively
(Corollary~\ref{nov30-12}). 

We come to the last main result of Section~\ref{lattice-dim1-section}
showing that the 
regularity 
of the saturation of
certain one dimensional graded ideals is additive in a certain sense.
This will be used in Section~\ref{applications-to-i(x)-1} to study the
regularity of vanishing ideals over bipartite graphs.

\noindent {\bf Theorem~\ref{vila-12-12-12}}{\it\  
Let $V_1,\ldots,V_c$ be a partition of 
$V=\{t_1,\ldots,t_s\}$ and let $\ell$ be a positive integer. If $I_k$
is a graded binomial ideal of $K[V_k]$ such that
$t_i^\ell-t_j^\ell\in I_k$ for 
$t_i,t_j\in V_k$ and $\mathcal{I}$ is the ideal of $K[V]$ generated by
all binomials $t_i^{\ell}-t_j^{\ell}$ with $1\leq i,j\leq s$, then 
\begin{equation*}
{\rm reg}\, K[V]/(I_1+\cdots+I_c+\mathcal{I}\colon
h^\infty)= 
  \sum_{k=1}^c{\rm reg}\, K[V_k]/(I_k\colon h_k^\infty)+(c-1)(\ell-1),
\end{equation*}
where $h=t_1\cdots t_s$ and $h_k=\prod_{t_i\in V_k}t_i$ for
$k=1,\ldots,c$. Here $K[V_k]$ and $K[V]$ are polynomial rings with 
the standard grading.}

In Section~\ref{applications-to-i(x)}, we study graded vanishing
ideals over arbitrary fields and give 
some applications of the results of
Section~\ref{lattice-dim1-section}. 
Let $K$  be a field and let $\mathbb{P}^{s-1}$ be
the projective space of dimension $s-1$ over $K$. Given a sequence
$v=(v_1,\ldots,v_s)$ of  
positive integers, the set 
$$
\{[(x_1^{v_{1}},\ldots,x_s^{v_s})]\, \vert\, x_i\in K^*\mbox{ for all
}i\}\subset\mathbb{P}^{s-1}
$$
is called a {\it degenerate projective torus\/} of type $v$, 
where $K^*=K\setminus\{0\}$. If $v_i=1$ for all $i$, this set 
is called a {\it projective torus\/} in $\mathbb{P}^{s-1}$ and it is
denoted by $\mathbb{T}$. If $X$ is a subset of $\mathbb{P}^{s-1}$, 
the {\it vanishing ideal\/} of $X$, denoted by
$I(X)$, is the ideal of $\widetilde{S}$ generated by the homogeneous
polynomials that vanish on all $X$.

For finite fields, we give formulae for the degree and regularity of
graded vanishing ideals over degenerate tori. 
The next result was shown in \cite{ci-mcurves} under the hypothesis that 
$I(X)$ is a complete intersection.  

\noindent {\bf Corollary~\ref{dec9-12}}{\it\ Let $K=\mathbb{F}_q$ be
a finite field and let $X$ 
be a degenerate projective torus of 
type $v=({v_1},\ldots,v_s)$. If $d_i=(q-1)/\gcd(v_i,q-1)$ for
$i=1,\ldots,s$ and $r=\gcd(d_1,\ldots,d_s)$, then 
$$
{\rm
reg}(\widetilde{S}/I(X))=r\cdot g(\mathcal{S})+1+\sum_{i=1}^s(d_i-1)\
\mbox{ and }\ \deg(\widetilde{S}/I(X))=d_1\cdots d_s/r.
$$
} 
This result allows us to
construct graded vanishing ideals over finite fields with prescribed 
regularity and degree of a
certain type (Proposition~\ref{dec30-12-3}). 

We characterize when a graded lattice ideal of dimension $1$ is a
vanishing ideal in terms of the degree (Proposition~\ref{dec30-12-2}).
Then we classify the vanishing ideals that are lattice ideals of dimension $1$.
For finite fields, it is shown that $X$ is a  subgroup of a 
projective torus if and only if $X$
is parameterized by monomials (Proposition~\ref{dec30-12-1}). For
infinite, fields we show a formula for the vanishing ideal of 
an algebraic toric set parameterized by monomials
(Theorem~\ref{ipn-ufpe-cinvestav-1}). For finite fields, a formula for
the vanishing ideal was shown in \cite[Theorem~2.1]{algcodes}.

In Section~\ref{applications-to-i(x)-1}, 
we study graded vanishing ideals over bipartite
graphs. Let $G$ be a simple graph with vertex set $V_G=\{y_1,\ldots,y_n\}$
and edge set $E_G$.  We refer to \cite{Boll} for the general theory of
graphs. Let $\{v_1,\ldots,v_s\}\subset \mathbb{N}^n$ be the 
set of all characteristic vectors of the edges of the graph $G$. We
may identify the edges of $G$ with the variables $t_1,\ldots,t_s$ of a
polynomial ring $K[t_1,\ldots,t_s]$. The set $X\subset\mathbb{T}$
parameterized by 
the monomials $y^{v_1},\ldots,y^{v_s}$ is called the {\it projective
algebraic 
toric set parameterized by the edges} 
of $G$. 

For bipartite graphs, if the field $K$ is finite, 
we are able to express the regularity of
the vanishing ideal of $X$ in terms of the regularities of 
the vanishing ideals of the projective algebraic
toric sets parameterized by the edges of the blocks of the graph. The
blocks of $G$ are essentially the maximal $2$-connected subgraphs of $G$ (see
Section~\ref{applications-to-i(x)-1}). If $K$ is infinite, the
vanishing ideal of $X$ is the toric ideal of
$K[t^{v_1}z,\ldots,t^{v_s}z]$ (as is seen in
Theorem~\ref{ipn-ufpe-cinvestav-1}), in this case $I(X)$ is 
Cohen-Macaulay \cite{ITG,bowtie} (because $G$ is bipartite) and formulae 
for the regularity and the $a$-invariant are given in
\cite[Proposition~4.2]{shiftcon}. 

We come to the main result of Section~\ref{applications-to-i(x)-1}.

\noindent {\bf Theorem~\ref{maria-jorge-vila-12-12-12}}{\it\   
Let $G$ be a bipartite graph without isolated vertices
and let $G_1,\ldots,G_c$ be the blocks of $G$. If $K$ is a finite
field with $q$ elements and $X$ $($resp $X_k)$
is the projective algebraic toric set parameterized by
the edges of $G$ $($resp. $G_k\mathrm{)}$, then
$$
{\rm reg}\, K[E_G]/I(X)=\sum_{k=1}^c{\rm reg}\,
K[E_{G_k}]/I(X_k)+(q-2)(c-1).
$$
}
Let $P$ be
the toric ideal of $\mathbb{F}_q[y^{v_1},\ldots,y^{v_s}]$ and let $I$ 
be the binomial ideal $I=P+\mathcal{I}$, where
$\mathcal{I}$ is the binomial ideal 
$$\mathcal{I}=(\{t_i^{q-1}-t_j^{q-1}\vert\, 
t_i,t_j\in E_{G}\}).$$ 
We relate the
regularity of $I(X)$ with the Hilbert function of $S/I$ and the
primary decompositions of $I$ (Proposition~\ref{dec30-12}).  As a byproduct one
obtains a method that can be used to compute the regularity
(Proposition~\ref{dec30-12}(d)). For an
arbitrary bipartite graph, Theorem~\ref{maria-jorge-vila-12-12-12} and
\cite[Theorem 2.18]{rs-codes} can be used to bound the regularity
of $I(X)$. 

For all unexplained
terminology and additional information,  we refer to
\cite{EisStu,cca} (for the theory of 
binomial and lattice ideals),
\cite{CLO,Eisen,eisenbud-syzygies,singular-book,Sta1,Vas1}
(for commutative algebra and the
theory of Hilbert functions), and 
\cite{Boll} (for the theory of graphs).

\section{Preliminaries}\label{prelim}

In this section, we 
introduce the notions of degree and index of regularity---via Hilbert
functions---and the notion of a lattice ideal. We present some of the results
that will be needed throughout the paper. 

Let $S=K[t_1,\ldots,t_s]$ be a polynomial ring over a field $K$ and
let $I$ be an ideal of $S$. The vector space of polynomials of $S$
(resp. $I$) of degree at most $d$ is denoted by $S_{\leq d}$ (resp.
$I_{\leq d}$). The functions 
$$
H_I^a(d)=\dim_K(S_{\leq d}/I_{\leq d})\ \ \mbox{ and }\ \
H_I(d)=H_I^a(d)-H_I^a(d-1)
$$
are called the {\it affine Hilbert function} and the {\it Hilbert
function} of $S/I$ respectively. We denote the Krull dimension of
$S/I$ by $\dim(S/I)$. If $k=\dim(S/I)$, according 
to \cite[Remark~5.3.16, p.~330]{singular-book}, there are unique
polynomials 
$$
\textstyle h^a_I(t)=\sum_{i=0}^{k}a_it^i\in 
\mathbb{Q}[t]\ \mbox{ and }\ h_I(t)=\sum_{i=0}^{k-1}c_it^i\in
\mathbb{Q}[t]$$ 
of degrees $k$ and $k-1$, respectively, such that
$h^a_I(d)=H_I^a(d)$ and $h_I(d)=H_I(d)$ for 
$d\gg 0$.  By convention the zero polynomial has degree $-1$. 

\begin{definition}
The integer $a_k(k!)$, denoted by ${\rm deg}(S/I)$, is 
called the {\it degree\/} of $S/I$. 
\end{definition}

Notice that $a_k(k!)=c_{k-1}((k-1)!)$ for $k\geq 1$. If $k=0$, then
$H_I^a(d)=\dim_K(S/I)$ for $d\gg 0$ and the degree of $S/I$ is 
just $\dim_K(S/I)$. If $S=\oplus_{d=0}^{\infty}S_d$ has the standard
grading and 
$I$ is a graded ideal, then
$$
\textstyle H_I^a(d)=\sum_{i=0}^d\dim_K(S_d/I_d)
$$
where $I_d=I\cap S_d$. Thus, one has $H_I(d)=\dim_K(S_d/I_d)$ for
all $d$.

\begin{definition}\label{definition:index-of-regularity}
The \emph{index of regularity} of $S/I$, denoted by
${\rm r}(S/I)$, is the least integer $\ell\geq 0$ such that
$h_I(d)=H_I(d)$ for $d\geq \ell$. 
\end{definition}

If $S$ has the standard grading and $I$ is a graded
Cohen-Macaulay ideal of $S$ of dimension $1$, then ${\rm reg}(S/I)$,
the Castelnuovo 
Mumford regularity of $S/I$, is equal to the index of regularity of $S/I$ (see
Theorem~\ref{jul10-12}).  

\begin{proposition}{\rm(\cite[Lemma~5.3.11]{singular-book}, 
\cite{prim-dec-critical})}\label{additivity-of-the-degree}
If $I$ is an ideal of $S$ and 
$I=\mathfrak{q}_1\cap\cdots\cap\mathfrak{q}_m$ is a minimal primary
decomposition, then
$$
\deg(S/I)=\sum_{{\rm ht}(\mathfrak{q}_i)={\rm
ht}(I)}\deg(S/\mathfrak{q}_i).$$
\end{proposition}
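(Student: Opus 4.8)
The plan is to reduce everything to the leading coefficient of the affine Hilbert polynomial $h^a_I$ and to exploit additivity of affine Hilbert polynomials across Mayer--Vietoris short exact sequences. Set $k=\dim(S/I)$ and recall that, since $S$ is a polynomial ring over a field, $\mathrm{ht}(\mathfrak{q}_i)=\mathrm{ht}(I)$ holds exactly when $\dim(S/\mathfrak{q}_i)=k$; after reindexing, let $\mathfrak{q}_1,\ldots,\mathfrak{q}_r$ be the components of dimension $k$, the remaining ones having dimension $<k$. Because $\deg(S/I)=a_k\cdot k!$ depends only on the coefficient of $t^k$ in $h^a_I$, every finitely generated module of dimension $<k$ contributes $0$ to the degree, and this is the feature I will use throughout.

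First I would separate the top-dimensional part from the rest. Writing $I=J\cap J'$ with $J=\mathfrak{q}_1\cap\cdots\cap\mathfrak{q}_r$ and $J'=\bigcap_{i>r}\mathfrak{q}_i$, I use the exact sequence
$$
0\to S/(J\cap J')\to S/J\oplus S/J'\to S/(J+J')\to 0,
$$
which on affine Hilbert polynomials gives $h^a_I=h^a_J+h^a_{J'}-h^a_{J+J'}$. Since $\dim(S/J')<k$ and $V(J+J')\subseteq V(J')$ forces $\dim(S/(J+J'))<k$, both $h^a_{J'}$ and $h^a_{J+J'}$ have degree $<k$, so the coefficient of $t^k$ in $h^a_I$ agrees with that of $h^a_J$. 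Hence $\deg(S/I)=\deg(S/J)$, and I have reduced to the case where every component has dimension $k$.

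Next I would peel off top-dimensional components one at a time by induction on $r$. Writing $J=\mathfrak{q}_1\cap J''$ with $J''=\mathfrak{q}_2\cap\cdots\cap\mathfrak{q}_r$, the same sequence yields $h^a_J=h^a_{\mathfrak{q}_1}+h^a_{J''}-h^a_{\mathfrak{q}_1+J''}$. The crucial point is that $\dim(S/(\mathfrak{q}_1+J''))<k$: setting $\mathfrak{p}_i=\sqrt{\mathfrak{q}_i}$, one has $V(\mathfrak{q}_1+J'')=\bigcup_{i=2}^r\bigl(V(\mathfrak{p}_1)\cap V(\mathfrak{p}_i)\bigr)$, and because the $\mathfrak{p}_i$ are distinct minimal primes of $I$, none containing another, each $V(\mathfrak{p}_1)\cap V(\mathfrak{p}_i)$ is a proper closed subset of the irreducible $k$-dimensional variety $V(\mathfrak{p}_1)$, hence of dimension $<k$. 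Comparing coefficients of $t^k$ gives $\deg(S/J)=\deg(S/\mathfrak{q}_1)+\deg(S/J'')$, and the induction hypothesis completes the sum.

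The main obstacle is precisely this dimension count in the inductive step: I must confirm that distinct top-dimensional components meet in dimension strictly below $k$, which is where minimality of the primary decomposition---guaranteeing the $\mathfrak{p}_i$ are pairwise incomparable---is essential. A secondary technical point is the additivity of the affine Hilbert polynomial along the short exact sequence; the degree-$\leq d$ pieces need not form an exact sequence for small $d$, but they do for $d\gg 0$, which suffices since only the Hilbert polynomial (equivalently its leading coefficient) enters the definition of degree. Alternatively, one could avoid the induction altogether by taking a prime filtration of $S/I$ and invoking the associativity formula $\deg(S/I)=\sum_{\mathfrak{p}}\ell_{\mathfrak{p}}\bigl((S/I)_{\mathfrak{p}}\bigr)\,\deg(S/\mathfrak{p})$ over the minimal primes of maximal dimension, then comparing $\ell_{\mathfrak{p}}\bigl((S/I)_{\mathfrak{p}}\bigr)$ with $\ell_{\mathfrak{p}}\bigl((S/\mathfrak{q})_{\mathfrak{p}}\bigr)$ for the corresponding primary component $\mathfrak{q}$.
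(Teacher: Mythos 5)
The paper offers no proof of this proposition at all: it is quoted as a known result, with references to \cite[Lemma~5.3.11]{singular-book} and \cite{prim-dec-critical}. So your argument cannot be compared to one in the text; judged on its own, the strategy (split off the top-dimensional components, peel them off one at a time via the sequence $0\to S/(J\cap J')\to S/J\oplus S/J'\to S/(J+J')\to 0$, and observe that anything of dimension $<k$ cannot contribute to the coefficient of $t^k$) is the standard one and is sound. The key dimension count in the inductive step is correct and correctly attributed to minimality: the radicals of the height-$\mathrm{ht}(I)$ components are distinct minimal primes of $I$, hence incomparable, so any prime containing $\mathfrak{p}_1+\mathfrak{p}_i$ strictly contains $\mathfrak{p}_1$ and $\dim S/(\mathfrak{q}_1+J'')<k$. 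The alternative you sketch at the end (a prime filtration plus the associativity formula $\deg(S/I)=\sum_\mathfrak{p}\ell_\mathfrak{p}((S/I)_\mathfrak{p})\deg(S/\mathfrak{p})$) is in fact closer to how the cited sources handle it.

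The one step that would fail as written is your justification of additivity: the truncated sequences $0\to S_{\le d}/I_{\le d}\to S_{\le d}/J_{\le d}\oplus S_{\le d}/J'_{\le d}\to S_{\le d}/(J+J')_{\le d}\to 0$ are in general \emph{not} exact in the middle, even for $d\gg 0$. For example, with $J=(x,y)$ and $J'=(x-1)$ in $K[x,y,z]$ one has $J+J'=(1)$, yet $H^a_{J\cap J'}(d)=\binom{d+2}{2}+d$ while $H^a_J(d)+H^a_{J'}(d)=\binom{d+2}{2}+d+1$; the defect is $1$ for all large $d$. So the identity $h^a_I=h^a_J+h^a_{J'}-h^a_{J+J'}$ of polynomials is false in general. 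What is true, and all you need, is that the defect is eventually a polynomial of degree $<k$: passing to homogenizations $M\mapsto M^h\subset S[u]$ (which converts affine Hilbert functions into graded ones and satisfies $(J\cap J')^h=J^h\cap J'^h$) gives a genuinely exact graded sequence whose third term is $S[u]/(J^h+J'^h)$, a ring of Krull dimension at most $k$ whenever $\dim S/(J+J')<k$; hence the coefficient of $d^k$ is additive as claimed. With that repair (or any equivalent bound on the homology of the truncated complex), your proof goes through.
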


\begin{definition} Let $\mathbb{P}^{s-1}$ be a projective space over
$K$ and let $X\subset\mathbb{P}^{s-1}$. If $S$ has the standard
grading, the {\it vanishing ideal\/} of $X$, denoted by
$I(X)$, is the ideal of $S$ generated by the homogeneous
polynomials of $S$ that vanish on all $X$.  
\end{definition}

\begin{corollary}{\cite{geramita-cayley-bacharach}}  
If $X\subset\mathbb{P}^{s-1}$ is a finite set, 
then $\deg(S/I(X))=|X|$.

\end{corollary}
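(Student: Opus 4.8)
The plan is to prove that for a finite set $X \subset \mathbb{P}^{s-1}$, the degree of $S/I(X)$ equals the cardinality $|X|$. Since $X$ is finite, the affine Hilbert function argument reduces to computing $\dim_K(S_{\leq d}/I(X)_{\leq d})$ for $d \gg 0$, and the key is that $\dim(S/I(X)) = 1$ (the affine cone over a finite set of points is a union of lines through the origin), so by the Definition of degree we need $H^a_{I(X)}(d) = |X|$ for large $d$, whence $\deg(S/I(X)) = a_1 \cdot 1! = |X|$ once we identify the leading term.

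The main step is to exhibit an explicit isomorphism realizing the Hilbert function. First I would write $X = \{P_1,\ldots,P_n\}$ with $n = |X|$ and choose for each point $P_i$ a representative and a linear form $L_i$ that does not vanish at $P_i$. The evaluation map sending a homogeneous polynomial $g$ of degree $d$ to the tuple $\bigl(g(P_1)/L_1(P_1)^d,\ldots,g(P_n)/L_n(P_n)^d\bigr) \in K^n$ is well defined on $S_d$, its kernel is exactly $I(X)_d$, and so $\dim_K(S_d/I(X)_d) = \dim_K(\mathrm{image})$. Using a separating family of forms (a Vandermonde-type argument, choosing linear forms that distinguish the points) one shows this evaluation map is surjective onto $K^n$ for all $d \geq n-1$, so that the Hilbert function $H_{I(X)}(d) = n$ stabilizes for $d \gg 0$.

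I expect the surjectivity of the evaluation map to be the main obstacle, since it requires producing, for each point $P_i$, a homogeneous form of the prescribed degree $d$ vanishing at all $P_j$ with $j \neq i$ but not at $P_i$. The standard device is to take products of separating linear forms: since the points are distinct in $\mathbb{P}^{s-1}$, for each pair $i \neq j$ there is a linear form $\ell_{ij}$ with $\ell_{ij}(P_j) = 0$ and $\ell_{ij}(P_i) \neq 0$, and the product $\prod_{j \neq i} \ell_{ij}$ is a form of degree $n-1$ with the desired vanishing behavior; multiplying by powers of $L_i$ raises the degree arbitrarily.

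Finally, having established $H_{I(X)}(d) = n$ for $d \geq n-1$, I would invoke the Definitions from the preliminaries: the Hilbert polynomial $h_{I(X)}(t)$ is the constant $n$, so it has degree $0$ and hence $\dim(S/I(X)) = 1$; writing $h^a_{I(X)}(t) = a_1 t + a_0$, the relation $a_1 = c_0 = n$ (the stabilized value of the Hilbert function) gives $\deg(S/I(X)) = a_1(1!) = n = |X|$, completing the argument. Since this corollary is cited from \cite{geramita-cayley-bacharach}, the proof in the paper is presumably omitted or reduced to this citation, but the evaluation-map computation above is the self-contained argument one would supply.
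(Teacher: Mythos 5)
Your argument is correct and complete: the evaluation map with the normalization $g(P_i)/L_i(P_i)^d$ is well defined on $S_d$ with kernel $I(X)_d$, the separating products $\prod_{j\neq i}\ell_{ij}\cdot L_i^{\,d-n+1}$ give surjectivity onto $K^n$ for $d\geq n-1$ over any field, and the resulting constant Hilbert function $H_{I(X)}(d)=n$ yields $k=1$ and $\deg(S/I(X))=c_0\cdot 0!=n$ under the paper's definition. The paper itself supplies no proof, only the citation to Geramita--Kreuzer--Robbiano, so there is nothing to compare against; your self-contained argument is the standard one and fills that gap correctly.
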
 

Recall that a binomial in $S$ is a polynomial of the
form $t^a-t^b$, where 
$a,b\in \mathbb{N}^s$ and where, if
\mbox{$a=(a_1,\dots,a_s)\in\mathbb{N}^s$}, we set 
\[
t^a=t_1^{a_1}\cdots t_s^{a_s}\in S. 
\]
A binomial of the form $t^a-t^b$ is usually referred 
to as a {\it pure binomial\/} \cite{EisStu}, although here we are dropping the
adjective ``pure''.  A {\it binomial ideal\/} is an ideal generated by binomials. 

Given $c=(c_i)\in {\mathbb Z}^s$, the set ${\rm supp}(c)=\{i\, |\,
c_i\neq 0\}$ is  called the {\it support\/} of
$c$. The vector $c$ can be uniquely written as $c=c^+-c^-$, 
where $c^+$ and $c^-$ are two nonnegative vectors 
with disjoint support, the {\it positive\/} and 
the {\it negative\/} part of $c$ respectively. If $t^a$ is a monomial,
with $a=(a_i)\in\mathbb{N}^s$, the set 
${\rm supp}(t^a)=\{t_i\vert\, a_i>0\}$ is called the {\it support\/} of $t^a$.

\begin{definition}\label{lattice-ideal-def}\rm 
A {\it lattice ideal\/} is an ideal of the form 
$I(\mathcal{L})=(t^{a^+}-t^{a^-}\vert\, 
a\in\mathcal{L})\subset S$ for some subgroup 
$\mathcal{L}$ of $\mathbb{Z}^s$. A subgroup $\mathcal{L}$ of $\mathbb{Z}^s$ is
called a {\it lattice\/}.   
\end{definition}

The class of lattice ideals has been studied in many places, see for
instance \cite{EisStu,cca} and the references 
there. This concept is a natural generalization of a toric ideal. 

The following is a well known description of lattice
ideals that follows from \cite[Corollary~2.5]{EisStu}.

\begin{theorem}{\rm\cite{EisStu}}\label{jun12-02} If $I$ is a binomial ideal 
of $S$, then $I$ is a lattice ideal if and only if $t_i$ is a non-zero
divisor of $S/I$ 
for all $i$. 
\end{theorem}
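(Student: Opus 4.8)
The plan is to prove the stated characterization of lattice ideals among binomial ideals, namely that a binomial ideal $I\subset S$ is a lattice ideal if and only if each variable $t_i$ is a non-zero divisor on $S/I$. This is cited from \cite[Corollary~2.5]{EisStu}, so I would organize the argument around the two implications and rely on the structure theory of binomial ideals.

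For the forward direction, suppose $I=I(\mathcal{L})$ for a lattice $\mathcal{L}\subseteq\mathbb{Z}^s$. The key observation is that the saturation of a lattice ideal with respect to any variable returns the ideal itself: concretely, I would show $(I(\mathcal{L})\colon t_i^\infty)=I(\mathcal{L})$. To see this, suppose $t_i^N\cdot f\in I(\mathcal{L})$ for some $f\in S$ and some $N$. Using that $I(\mathcal{L})$ is generated by the binomials $t^{a^+}-t^{a^-}$ with $a\in\mathcal{L}$, and that $\mathcal{L}$ is closed under negation, one can reduce modulo $I(\mathcal{L})$ to cancel the $t_i$-power; the point is that multiplying by $t_i$ corresponds to adding $e_i$ inside the monomial exponents, and the lattice relations let us transport those exponents freely. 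Hence $t_i$ is a non-zero divisor on $S/I(\mathcal{L})$. Since this holds for every $i$, the forward implication follows.

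For the converse, I would assume $I$ is a binomial ideal with every $t_i$ a non-zero divisor on $S/I$, and argue that $I$ is a lattice ideal. Here I expect the real work to be hidden in \cite{EisStu}: the structure theorem for binomial ideals shows that $I$ has a primary decomposition into binomial primary ideals, and the associated primes are themselves binomial. The non-zero divisor hypothesis forces no variable $t_i$ to lie in any associated prime, which means $I$ is saturated with respect to the product $t_1\cdots t_s$. Then one invokes the characterization (Corollary~2.5 of \cite{EisStu}) that a binomial ideal saturated with respect to all variables is exactly of the form $I(\mathcal{L})$ for the lattice $\mathcal{L}=\{a-b\colon t^a-t^b\in I\}$. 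I would verify that this $\mathcal{L}$ is indeed a subgroup of $\mathbb{Z}^s$ and that $I=I(\mathcal{L})$, the latter using saturation to produce, from each generator $t^a-t^b$ of $I$ after clearing common factors, a genuine lattice element.

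The main obstacle is the converse direction, and specifically extracting the lattice from a saturated binomial ideal: one must check that the set of exponent differences coming from binomials in $I$ forms a subgroup (closure under addition requires combining two binomials $t^a-t^b$ and $t^c-t^d$ into $t^{a+c}-t^{b+d}$ modulo $I$, which uses the binomial structure) and that the resulting $I(\mathcal{L})$ recovers $I$ exactly rather than a proper sub- or super-ideal. Since the statement is quoted directly from \cite{EisStu}, the cleanest path is to cite \cite[Corollary~2.5]{EisStu} for the equivalence and supply only the short saturation argument above to translate ``$t_i$ is a non-zero divisor for all $i$'' into ``$I$ is saturated with respect to $t_1\cdots t_s$,'' which is the hypothesis in that corollary.
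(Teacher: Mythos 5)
The paper offers no proof of this statement---it is quoted directly as a consequence of \cite[Corollary~2.5]{EisStu}---and your proposal ultimately rests on the same citation, so the two approaches coincide. Your supplementary reductions (that ``$t_i$ is a non-zero divisor for all $i$'' is equivalent to $I=(I\colon (t_1\cdots t_s)^\infty)$, and that the exponent differences $a-b$ of binomials $t^a-t^b\in I$ form a lattice $\mathcal{L}$ with $I=I(\mathcal{L})$ once common monomial factors are stripped using the non-zero-divisor hypothesis) are correct and are exactly how the result is standardly extracted from that corollary.
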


Given a subset $I\subset S$, its {\it variety\/}, denoted by $V(I)$,
is the set of all  
$a\in\mathbb{A}_K^a$ such that $f(a)=0$ for all $f\in I$, where
$\mathbb{A}_K^a$ is the affine space over $K$. 

\begin{lemma}{\rm\cite{ci-lattice}}\label{apr24-12-1} Let $I\subset S$ be a graded
binomial ideal such that $V(I,t_i)=\{0\}$ for all $i$. Then the
following hold.

{\rm (a)} If $I$ is Cohen-Macaulay, then $I$ is a lattice ideal.

{\rm (b)} If $\mathfrak{p}$ is a prime ideal
containing $(I,t_k)$ for some $1\leq k\leq s$, then $\mathfrak{p}=(t_1,\ldots,t_s)$.
\end{lemma}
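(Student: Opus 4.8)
The plan is to prove part (b) first and then deduce (a) from it together with the Eisenbud--Sturmfels criterion (Theorem~\ref{jun12-02}). For (b), I would reformulate the conclusion as the assertion that $\mathfrak{m}=(t_1,\ldots,t_s)$ is the only prime containing $(I,t_k)$, i.e.\ that $\sqrt{(I,t_k)}=\mathfrak{m}$, equivalently $\dim S/(I,t_k)=0$. Since the generators of $(I,t_k)$ are homogeneous of positive degree, $(I,t_k)\subseteq\mathfrak{m}$, so the ideal is proper and $\mathfrak{m}$ is one prime over it; it remains to rule out any other. First I would note that $\dim S/(I,t_k)$ is unchanged under extension of scalars to the algebraic closure $\bar{K}$, so by the Nullstellensatz it equals $0$ precisely when the affine variety of $(I,t_k)$ over $\bar{K}$ reduces to the origin, which is exactly the hypothesis $V(I,t_k)=\{0\}$. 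Hence for each $i$ some power $t_i^N$ lies in $(I,t_k)\bar{S}$, and by faithful flatness of $\bar{S}$ over $S$ one has $(I,t_k)\bar{S}\cap S=(I,t_k)$, so in fact $t_i^N\in(I,t_k)$ for all $i$. Therefore $\sqrt{(I,t_k)}=\mathfrak{m}$, and any prime $\mathfrak{p}\supseteq(I,t_k)$ contains $\mathfrak{m}$; being proper, it must equal $\mathfrak{m}$.

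For (a), the key preliminary remark is that a graded binomial ideal always satisfies $\dim(S/I)\geq 1$: every homogeneous generator $t^{a}-t^{b}$ has $|a|=|b|$, so it vanishes along the whole line $\{\lambda(1,\ldots,1):\lambda\in\bar{K}\}$, which therefore lies in $V(I)$; consequently ${\rm ht}(I)\leq s-1$. Now assume $I$ is Cohen--Macaulay, so that $S/I$ is Cohen--Macaulay and hence has no embedded primes, every associated prime of $I$ being a minimal prime of height ${\rm ht}(I)$. By Theorem~\ref{jun12-02} it suffices to show each $t_k$ is a non-zero divisor on $S/I$, i.e.\ that no $t_k$ lies in any associated prime. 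Suppose instead that $t_k\in\mathfrak{p}$ for some $\mathfrak{p}\in{\rm Ass}(S/I)$. Then $\mathfrak{p}\supseteq(I,t_k)$, so part (b) forces $\mathfrak{p}=\mathfrak{m}$ and hence ${\rm ht}(\mathfrak{p})=s$. But $\mathfrak{p}$ is a minimal prime of $I$, so ${\rm ht}(\mathfrak{p})={\rm ht}(I)\leq s-1$, a contradiction. Thus every $t_k$ is a non-zero divisor on $S/I$, and $I$ is a lattice ideal.

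The main obstacle I anticipate is the field-of-definition issue in (b): the hypothesis is stated in terms of $K$-points, whereas the equivalence $\dim S/(I,t_k)=0\Leftrightarrow V(I,t_k)=\{0\}$ that drives the argument needs the algebraic closure. I would resolve this either by reading the variety hypothesis over $\bar{K}$ (as is standard for graded ideals, where $V=\{0\}$ expresses that the associated projective scheme is empty), or, if one insists on $K$-points, by invoking the binomial structure: over $\bar{K}$ the minimal primes of a binomial ideal are again of a controlled ``binomial/toral'' type, so a nontrivial $\bar{K}$-zero on the hyperplane $t_k=0$ would propagate to a zero detected by one of the remaining hypotheses $V(I,t_i)=\{0\}$, yielding the required contradiction. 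Once the radical computation in (b) is secured, the deduction of (a) is purely formal, the Cohen--Macaulay hypothesis entering only to guarantee that ${\rm Ass}(S/I)$ consists of minimal primes of a single height.
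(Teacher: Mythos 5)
The paper does not actually contain a proof of this lemma; it is quoted from \cite{ci-lattice}. So I can only judge your argument on its own merits. Your overall architecture is the natural one and is surely close to the source: establish that $\sqrt{(I,t_k)}=\mathfrak{m}$ to get (b), then get (a) from unmixedness of Cohen--Macaulay rings together with Theorem~\ref{jun12-02}. Part (a) is essentially correct as you present it, with one small caveat: the claim $|a|=|b|$ for a homogeneous binomial only holds in the standard grading; for a positive grading $\deg(t_i)=d_i$ you should instead observe that $I$ is contained in the kernel of $t_i\mapsto \lambda^{d_i}$ (a prime of height $s-1$), or equivalently that the curve $\{(\lambda^{d_1},\ldots,\lambda^{d_s})\}$ lies in $V(I)$, to conclude ${\rm ht}(I)\le s-1$.

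The genuine gap is the one you yourself flag in part (b), and neither of your proposed resolutions closes it. The paper defines $V(J)$ as the set of $K$-rational zeros, so the hypothesis $V(I,t_k)=\{0\}$ is a statement about $K$-points, while your Nullstellensatz step needs the variety over $\bar{K}$ to be $\{0\}$; reinterpreting the hypothesis over $\bar{K}$ changes the statement, and the ``propagation'' remark is not an argument. The fix is concrete and uses exactly the pure-difference binomial structure: let $p\in V\bigl((I,t_k)\bigr)(\bar{K})$ be nonzero, set $\Delta=\{i: p_i=0\}$ (note $k\in\Delta$), and let $q$ be the point with $q_i=0$ for $i\in\Delta$ and $q_i=1$ otherwise. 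For a generator $t^a-t^b$, the case where exactly one of ${\rm supp}(a)$, ${\rm supp}(b)$ meets $\Delta$ cannot occur (it would force $p^b=0$ with all relevant $p_i\ne 0$), and in the remaining cases $q^a=q^b\in\{0,1\}$; also $q_k=0$. Hence $q$ is a nonzero point of $V\bigl((I,t_k)\bigr)$ with coordinates in $\{0,1\}\subset K$, contradicting the $K$-point hypothesis. This shows $V\bigl((I,t_k)\bigr)(\bar K)=\{0\}$, after which your Nullstellensatz and faithful-flatness steps go through and give $\sqrt{(I,t_k)}=\mathfrak{m}$. With that lemma in place, your deduction of (a) is correct.
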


\section{Hilbert series and algebraic invariants}\label{hsai}

We continue to use the notation and definitions used in
Section~\ref{intro-ci-mcurves}. In this section we establish a
map $I\mapsto \widetilde{I}$ between the graded ideals of
$S$ and $\widetilde{S}$. We relate the minimal graded resolutions, the
Hilbert series, and the regularities of $I$ and $\widetilde{I}$.

In what follows $S=K[t_1,\ldots,t_s]=\oplus_{d=0}^\infty
S_d$ and $\widetilde{S}=K[t_1,\ldots,t_s]=\oplus_{d=0}^\infty
\widetilde{S}_d$ are polynomial rings 
graded by the grading induced by setting $\deg(t_i)=d_i$ for
all $i$ and the standard grading induced by
setting $\deg(t_i)=1$ for all $i$, respectively, where
$d_1,\ldots,d_s$ are positive integers. Let $F=\{f_1,\ldots,f_s\}$ be a set
of algebraically independent 
homogeneous polynomials of 
$\widetilde{S}$ of degrees $d_1,\ldots,d_s$ and let 
$$\phi\colon S\rightarrow K[F]$$ 
be the isomorphism of 
$K$-algebras given by $\phi(g)=g(f_1,\ldots,f_s)$, where
$K[F]$ is the $K$-subalgebra of $\widetilde{S}$ generated by
$F$. For convenience we
denote $\phi(g)$ by $\widetilde{g}$. 

\begin{definition}
Given a graded ideal 
$I\subset S$ generated by $g_1,\ldots,g_m$, we associate to $I$ the
graded ideal $\widetilde{I}\subset \widetilde{S}$ generated by
$\widetilde{g}_1,\ldots,\widetilde{g}_m$. We call
$\widetilde{I}$ the {\it homogenization} of $I$ with respect to 
$f_1,\ldots,f_s$. 
\end{definition}

The ideal $\widetilde{I}$ is independent of the generating 
set $g_1,\ldots,g_m$ and $\widetilde{I}$ is a graded ideal with
respect to the standard grading of $K[t_1,\ldots,t_s]$. In general the
map $I\mapsto \widetilde{I}$ does not preserve the height of $I$, as
the following example shows. 

\begin{example}\label{dec30-12-4} The polynomials $f_1=t_1$, $f_2=t_2$,
$f_3=t_1t_2-t_1t_3$ are algebraically independent over $\mathbb{Q}$.
The homogenization of $I=(t_1,t_2,t_3)$ with respect to 
$f_1,f_2,f_3$ is $\widetilde{I}=(t_1, t_2,t_1t_2-t_1t_3)$ which is
equal to $(t_1,t_2)$. Thus, ${\rm ht}(\widetilde{I})<{\rm ht}(I)$. 
\end{example}

\begin{lemma}\label{jul8-12} If
$K[F]\subset\widetilde{S}$ is 
an integral extension and $I$ is a graded ideal of $S$, then 
$\dim(S/I)=\dim(\widetilde{S}/\widetilde{I})$. 
\end{lemma}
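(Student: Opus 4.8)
The plan is to reduce the statement to the standard fact that an integral extension of rings preserves Krull dimension after passing to compatible residue rings. First I would observe that the $K$-algebra homomorphism $\phi\colon S\to K[F]$ is in fact an isomorphism: it is surjective by the definition of $K[F]$, and injective because $f_1,\ldots,f_s$ are algebraically independent, so no nonzero polynomial of $S$ maps to $0$. Writing $J=\phi(I)\subset K[F]$, the isomorphism $\phi$ induces an isomorphism $S/I\cong K[F]/J$, whence $\dim(S/I)=\dim(K[F]/J)$. Since $\phi$ carries a generating set $g_1,\ldots,g_m$ of $I$ to a generating set $\widetilde{g}_1,\ldots,\widetilde{g}_m$ of $J$ (as an ideal of $K[F]$), and these same elements generate $\widetilde{I}$ in $\widetilde{S}$, we get $\widetilde{I}=J\widetilde{S}$. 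Thus it suffices to prove $\dim(K[F]/J)=\dim(\widetilde{S}/J\widetilde{S})$.

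For this, set $A=K[F]$ and $B=\widetilde{S}$, so that $A\subset B$ is the given integral extension. I would compare the two dimensions prime by prime. For the inequality $\dim(B/JB)\le\dim(A/J)$: any minimal prime $\mathfrak{Q}$ over $JB$ contracts to a prime $\mathfrak{p}=\mathfrak{Q}\cap A$ containing $J$, and $B/\mathfrak{Q}$ is integral over the domain $A/\mathfrak{p}$, so $\dim(B/\mathfrak{Q})=\dim(A/\mathfrak{p})\le\dim(A/J)$; taking the maximum over such $\mathfrak{Q}$ gives the bound. For the reverse inequality, I would start from a minimal prime $\mathfrak{p}$ over $J$ realizing $\dim(A/\mathfrak{p})=\dim(A/J)$ and use the Lying-Over theorem to produce a prime $\mathfrak{Q}$ of $B$ with $\mathfrak{Q}\cap A=\mathfrak{p}$; since $J\subseteq\mathfrak{p}\subseteq\mathfrak{Q}$ forces $JB\subseteq\mathfrak{Q}$, and $B/\mathfrak{Q}$ is integral over $A/\mathfrak{p}$, we obtain $\dim(B/JB)\ge\dim(B/\mathfrak{Q})=\dim(A/\mathfrak{p})=\dim(A/J)$. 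Combining the two inequalities yields the desired equality.

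The only delicate point is that the natural map $A/J\to B/JB$ need not be injective, since $JB\cap A$ may be strictly larger than $J$; one therefore cannot simply invoke ``integral extensions preserve dimension'' for the pair $A/J\subset B/JB$ directly. The prime-by-prime argument above sidesteps this by working with individual contractions, where at each step $B/\mathfrak{Q}$ is genuinely integral over the \emph{domain} $A/\mathfrak{p}$ and the classical equidimensionality of integral extensions of domains applies. I expect this comparison of minimal primes, together with the correct application of Lying-Over, to be the main technical content; the reduction via $\phi$ and the identification $\widetilde{I}=J\widetilde{S}$ are formal.
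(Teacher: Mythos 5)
Your argument is correct, and it takes a genuinely different route from the paper's. The paper also begins by reducing to the integral extension $A=K[F]\subset B=\widetilde{S}$ with $J=\phi(I)$ and $\widetilde{I}=J\widetilde{S}$, but then it compares \emph{heights} rather than primes: since $K[F]\cong S$ is a polynomial ring, it is normal, and $\widetilde{S}$ is a domain, so the going-down theorem applies and yields $\operatorname{ht}(J)=\operatorname{ht}(J\widetilde{S})$ (citing Huneke--Swanson, Proposition B.2.4); the dimension equality then follows from $\dim(R/\mathfrak{a})=\dim R-\operatorname{ht}(\mathfrak{a})$, valid in both rings because they are $s$-dimensional affine domains (catenary and equidimensional). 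Your prime-by-prime argument instead uses only Lying-Over and the fact that an injective integral extension preserves Krull dimension, applied to $A/\mathfrak{p}\subset B/\mathfrak{Q}$ for each relevant contraction. What your approach buys is generality and economy of hypotheses: it needs neither the normality of $K[F]$ (hence no going-down) nor the catenary dimension formula, and it would work for an arbitrary injective integral extension of rings; you also correctly flag the pitfall that $A/J\to B/JB$ need not be injective, which is exactly why one cannot quote dimension-preservation for that pair directly. What the paper's approach buys is brevity by outsourcing the work to two standard references, at the cost of invoking the specific structure (normal base, domain extension, affine domains) of the situation at hand. One cosmetic remark: in your second inequality you do not actually need $A/\mathfrak{p}$ to be a domain or to invoke any ``equidimensionality of integral extensions of domains''---the equality $\dim(B/\mathfrak{Q})=\dim(A/\mathfrak{p})$ holds for any injective integral ring extension, by Lying-Over, Going-Up and Incomparability.
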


\begin{proof} Notice that $\phi\colon S\rightarrow K[F]$ is an isomorphism
and therefore, $\dim(S/I) = \dim(K[F]/\phi(I))$. But
$\widetilde{I}=\phi(I)\widetilde{S}$ and $K[F]$ is normal and
$\widetilde{S}$ is a
domain, then the integral extension $K[F]\subset\widetilde{S}$ satisfies
the going-down and the lying-over conditions (see \cite[Theorem~5]{Mat}).
Hence, by \cite[Proposition~B.2.4]{huneke-swanson-book}, 
${\rm ht}(\phi(I))={\rm ht}(\phi(I)\widetilde{S})$. Consequently, 
$\dim(K[F]/\phi(I))=\dim(\widetilde{S}/\widetilde{I})$.
\end{proof}

Let $I$ be a graded ideal of $S$ and let $F_I(t)$ be the Hilbert
series of $S/I$. The $a$-{\it invariant\/} of
$S/I$, denoted by $a(S/I)$, is the degree of $F_I(t)$ as a rational
function. Let 
\[
\leqno{{\mathbf F}:}\ \ \ \ \ \ \ \ \ \ \ \ \textstyle 0\rightarrow 
\bigoplus_{j=1}^{b_g} S(-a_{g,j})
{\rightarrow}\cdots
\rightarrow\bigoplus_{j=1}^{b_1}
S(-a_{1,j}){\rightarrow} S\rightarrow
S/I\rightarrow 0 
\]
be the minimal graded free resolution of $S/I$ as an $S$-module. 
The free modules in the
resolution of $S/I$ can be written as
$$
\textstyle F_i=\bigoplus_{j=1}^{b_i}
{S}(-a_{i,j})=\bigoplus_{j}S(-j)^{b_{i,j}}.
$$
The numbers $b_{i,j}={\rm Tor}_i(K,S/I)_j$ are called the {\it graded
Betti numbers\/} of $S/I$ and $b_i=\sum_{j}b_{i,j}$ is called the
$i^\textup{th}$ {\it Betti number\/} of $S/I$. The 
{\it Castelnuovo-Mumford regularity\/} or simply the {\it regularity}
of $S/I$ is defined as 
$${\rm reg}(S/I)=\max\{j-i\vert\,
b_{i,j}\neq 0\}.
$$ 

\begin{theorem}{\rm(\cite[p.~521]{Eisen},
\cite[Proposition~4.2.3]{monalg})}\label{jul10-12} 
If $I\subset S$ is a graded Cohen-Macaulay ideal, then  
$$
\textstyle a(S/I)={\rm reg}(S/I)-{\rm
depth}(S/I)-\sum_{i=1}^s(d_i-1)={\rm reg}(S/I)+{\rm
ht}(I)-\sum_{i=1}^sd_i.
$$
\end{theorem}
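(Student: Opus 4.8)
The plan is to reduce both equalities to two facts about the minimal graded free resolution $\mathbf{F}$ of $S/I$ and then obtain the identities by arithmetic. Write $g=\operatorname{pd}_S(S/I)$. Since $S$ is a polynomial ring and $S/I$ is Cohen--Macaulay, the Auslander--Buchsbaum formula gives $g={\rm depth}(S)-{\rm depth}(S/I)=s-\dim(S/I)=s-{\rm depth}(S/I)={\rm ht}(I)$. Put $b=\max_j a_{g,j}$, the largest twist in the last free module $F_g=\bigoplus_j S(-a_{g,j})$. I will prove (I) ${\rm reg}(S/I)=b-g$ and (II) $a(S/I)=b-\sum_i d_i$. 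Granting these, the second displayed equality is immediate, $a(S/I)=(b-g)+g-\sum_i d_i={\rm reg}(S/I)+{\rm ht}(I)-\sum_i d_i$; and substituting ${\rm ht}(I)=s-{\rm depth}(S/I)$ together with $\sum_i d_i=s+\sum_i(d_i-1)$ rewrites this as ${\rm reg}(S/I)-{\rm depth}(S/I)-\sum_i(d_i-1)$, which is the first equality. Thus the theorem is the weighted-graded avatar of the classical identity ${\rm reg}=a+\dim$ for Cohen--Macaulay quotients.

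For (I) the point is that the regularity is attained in the last homological degree. Because $S$ is Gorenstein and $S/I$ is Cohen--Macaulay of codimension $g$, one has $\operatorname{Ext}^k_S(S/I,S)=0$ for $k\neq g$; hence applying $\operatorname{Hom}_S(-,S)$ to $\mathbf{F}$ produces an exact complex that is a \emph{minimal} graded free resolution of $N:=\operatorname{Ext}^g_S(S/I,S)$, namely $0\to F_0^{*}\to\cdots\to F_{g}^{*}\to N\to 0$ with $F_i^{*}=\bigoplus_j S(a_{i,j})$ (minimality is preserved because the transposed differentials still have entries in $\mathfrak{m}=(t_1,\dots,t_s)$). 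In particular $N$ is generated by $F_g^{*}=\bigoplus_j S(a_{g,j})$, so its initial degree is $-b$. Now I invoke the elementary fact that in any minimal graded free resolution over $S$, since every $d_i\geq 1$, the least generating degree of the $p$-th free module exceeds the initial degree of the resolved module by at least $p$. Reindexing $F_{g-p}^{*}$ as the $p$-th free module, this gives $-a_{g-p,j}\geq -b+p$ for all $j$, i.e. $a_{i,j}-i\leq b-g$ for all $i,j$. The reverse inequality is clear by taking $i=g$ and a $j$ with $a_{g,j}=b$, so ${\rm reg}(S/I)=\max_{i,j}(a_{i,j}-i)=b-g$.

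For (II) I read the $a$-invariant off the Hilbert series. Additivity along $\mathbf{F}$ gives $F_I(t)=h(t)/\prod_{i=1}^s(1-t^{d_i})$ with $h(t)=\sum_{i=0}^g(-1)^i\sum_j t^{a_{i,j}}$ (where $a_{0,1}=0$). By the bound from (I) every exponent satisfies $a_{i,j}\leq b-(g-i)\leq b$, and the exponent $b$ is produced only by terms with $i=g$; hence the coefficient of $t^{b}$ in $h(t)$ equals $(-1)^g\,\#\{j:\,a_{g,j}=b\}\neq 0$, so $\deg h(t)=b$ with no cancellation of the top term. Since $a(S/I)$ is by definition the degree of $F_I(t)$ as a rational function, $a(S/I)=\deg h(t)-\sum_i d_i=b-\sum_i d_i$, which is (II). Combining (I) and (II) with the reduction of the first paragraph completes the proof.

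The main obstacle is step (I), and within it the claim that dualizing $\mathbf{F}$ into $S$ returns a \emph{minimal} resolution of $N$: this is exactly where the Cohen--Macaulay hypothesis is used, through the vanishing $\operatorname{Ext}^k_S(S/I,S)=0$ for $k<g$ over the Gorenstein ring $S$. Once this is in place, the syzygy-degree estimate is formal but genuinely relies on all weights $d_i$ being positive. A secondary point worth checking is the no-cancellation assertion in (II); it is supplied precisely by the degree bound of (I), and without Cohen--Macaulayness the top terms of $h(t)$ could in principle cancel, so that the clean identity would break.
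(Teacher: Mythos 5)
Your proof is correct. The paper offers no proof of this statement, citing \cite[p.~521]{Eisen} and \cite[Proposition~4.2.3]{monalg} instead, and your argument---dualizing the minimal free resolution into $S$ to show that for a Cohen--Macaulay quotient the regularity is attained at the last free module, then reading the $a$-invariant off the (non-cancelling) top-degree term of the Hilbert series numerator---is essentially the standard derivation underlying those references.
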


\begin{lemma}\label{jul9-12} Let $I$ be a graded ideal of
$S$. If $K[f_1,\ldots,f_s]\subset\widetilde{S}$ is an integral
extension and 
\[
\leqno{{\mathbf F}:}\ \ \ \ \ \ \ \ \ \ \ \ \textstyle 0\rightarrow 
\bigoplus_{j=1}^{b_g} S(-a_{g,j})
{\rightarrow}\cdots
\rightarrow\bigoplus_{j=1}^{b_1}
S(-a_{1,j}){\rightarrow} S\rightarrow
S/I\rightarrow 0 
\]
is the minimal graded free resolution of $S/I$, then 
\[
\leqno{\widetilde{\mathbf{F}}:}\ \ \ \ \ \ \ \ \ \ \ \ 
\ \ \ \ \ \ \ \ \ \ \ \ \textstyle 0\rightarrow 
\bigoplus_{j=1}^{b_g} \widetilde{S}(-a_{g,j})
{\rightarrow}\cdots
\rightarrow\bigoplus_{j=1}^{b_1}
\widetilde{S}(-a_{1,j}){\rightarrow}
\widetilde{S}\rightarrow \widetilde{S}/\widetilde{I} \rightarrow 0
\]
is the minimal graded free resolution of
$\widetilde{S}/\widetilde{I}$. 
\end{lemma}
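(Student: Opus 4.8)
The plan is to obtain $\widetilde{\mathbf{F}}$ from $\mathbf{F}$ by applying $\phi$ to the differentials, and then to verify directly that the resulting complex of free $\widetilde{S}$-modules is an acyclic, minimal resolution of $\widetilde{S}/\widetilde{I}$. Write the differentials of $\mathbf{F}$ as matrices $\varphi_i$ with entries in $S$, and set $\widetilde{\varphi}_i=\phi(\varphi_i)$, the matrix obtained by applying $\phi$ to each entry; since the entries lie in $K[F]\subset\widetilde{S}$, these define the maps of $\widetilde{\mathbf{F}}$. Because $\phi$ is a ring homomorphism and $\varphi_i\varphi_{i+1}=0$, we get $\widetilde{\varphi}_i\widetilde{\varphi}_{i+1}=\phi(\varphi_i\varphi_{i+1})=0$, so $\widetilde{\mathbf{F}}$ is a complex, and its cokernel at the right end is $\widetilde{S}/\widetilde{I}$ because the columns of $\widetilde{\varphi}_1$ generate $\widetilde{I}$. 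Moreover $\phi$ carries an element of $S$-degree $e$ to an element of $\widetilde{S}$-degree $e$, so $\widetilde{\mathbf{F}}$ is graded with exactly the stated twists $a_{i,j}$.

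The heart of the argument is acyclicity, for which I would invoke the Buchsbaum--Eisenbud criterion \cite[Theorem~1.4.13]{BHer}. Let $r_i$ denote the expected rank of $\varphi_i$; since $\mathbf{F}$ is acyclic, the criterion gives $\mathrm{rank}(\varphi_i)=r_i$ and $\mathrm{grade}(I_{r_i}(\varphi_i))\ge i$ for all $i$, and I must check the same two conditions for $\widetilde{\varphi}_i$. The rank condition is immediate: since $\phi\colon S\to K[F]$ is an isomorphism it preserves vanishing and non-vanishing of minors, and since $K[F]\subset\widetilde{S}$ is an inclusion of domains a minor that is nonzero in $K[F]$ stays nonzero in $\widetilde{S}$; hence $\mathrm{rank}_{\widetilde{S}}(\widetilde{\varphi}_i)=\mathrm{rank}_{K[F]}(\widetilde{\varphi}_i)=\mathrm{rank}_S(\varphi_i)=r_i$, and the free ranks $b_i$ are unchanged, so $r_i$ is also the expected rank in $\widetilde{\mathbf{F}}$.

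The grade condition is where the real work lies, and I would reduce it to height exactly as in the proof of Lemma~\ref{jul8-12}. Because minors commute with ring homomorphisms, $I_{r_i}(\widetilde{\varphi}_i)=\phi\bigl(I_{r_i}(\varphi_i)\bigr)\widetilde{S}$. All three rings $S$, $K[F]$, $\widetilde{S}$ are polynomial rings, hence Cohen--Macaulay, so grade equals height for every proper ideal. The isomorphism $\phi$ preserves height, and the integral extension $K[F]\subset\widetilde{S}$ of a normal domain by a domain preserves the height of an extended ideal (see \cite[Theorem~5]{Mat} and \cite[Proposition~B.2.4]{huneke-swanson-book}, as used for Lemma~\ref{jul8-12}). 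Chaining these equalities yields $\mathrm{grade}_{\widetilde{S}}\,I_{r_i}(\widetilde{\varphi}_i)=\mathrm{ht}_{\widetilde{S}}\,I_{r_i}(\widetilde{\varphi}_i)=\mathrm{ht}_S\,I_{r_i}(\varphi_i)=\mathrm{grade}_S\,I_{r_i}(\varphi_i)\ge i$. By Buchsbaum--Eisenbud, $\widetilde{\mathbf{F}}$ is acyclic, hence a graded free resolution of $\widetilde{S}/\widetilde{I}$.

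Finally, for minimality I would note that each entry of $\varphi_i$ is a homogeneous form of positive $S$-degree (minimality of $\mathbf{F}$), so its image under the degree-preserving map $\phi$ is a homogeneous form of positive $\widetilde{S}$-degree, i.e.\ lies in the irrelevant maximal ideal $(t_1,\dots,t_s)$ of $\widetilde{S}$. Thus no differential of $\widetilde{\mathbf{F}}$ has a unit entry, so $\widetilde{\mathbf{F}}$ is minimal; by uniqueness of minimal graded free resolutions it is \emph{the} minimal graded free resolution of $\widetilde{S}/\widetilde{I}$. I expect the main obstacle to be the grade computation, and its resolution—transferring grade to height and using that the integral extension $K[F]\subset\widetilde{S}$ preserves heights—is precisely the device already deployed in Lemma~\ref{jul8-12}; an alternative, if one prefers, is to observe that $\widetilde{S}$ is a finite free (hence faithfully flat) $K[F]$-module by miracle flatness and to obtain $\widetilde{\mathbf{F}}$ by base change, which makes both acyclicity and the twists transparent.
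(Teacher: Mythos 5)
Your proof is correct and follows essentially the same route as the paper's: apply $\phi$ entrywise to the differentials, invoke the Buchsbaum--Eisenbud acyclicity criterion, and verify the rank and grade conditions by transferring heights through the isomorphism $\phi$ and the integral extension $K[F]\subset\widetilde{S}$ exactly as in Lemma~\ref{jul8-12}. Your explicit reduction of grade to height via Cohen--Macaulayness and your verification of minimality are slightly more careful than the paper's write-up, but they are refinements of the same argument rather than a different one.
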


\begin{proof} For $1\leq i\leq g$ consider the map
$\varphi_i\colon\bigoplus_{j=1}^{b_i}
S(-a_{i,j}){\rightarrow} \bigoplus_{j=1}^{b_{i-1}}
S(-a_{i-1,i})$ of the resolution of $S/I$. The entries of the matrix
$\varphi_i$ are homogeneous polynomials of $S$, and accordingly the
ideal $L_i=I_{r_i}(\varphi_i)$ generated by the $r_i$-minors of $\varphi_i$
is graded with respect to the grading of $S$, where $r_i$ is the rank
of $\varphi_i$ (that is, the largest size of a nonvanishing minor). 

Let $\widetilde{\varphi}_i$ be the matrix obtained from $\varphi_i$ by replacing 
each entry of $\varphi_i$ by its image under the map $S\mapsto
\widetilde{S}$,
$g\mapsto\widetilde{g}=g(f_1,\ldots,f_s)$. Since this map
is an injective homomorphism of $K$-algebras, the rank of
$\widetilde{\varphi}_i$ is equal to $r_i$,
$\widetilde{\varphi}_{i-1}\widetilde{\varphi}_i=0$, and
$I_{r_i}(\widetilde{\varphi_i})=\widetilde{L_i}$ for all $i$.
Therefore, one has a graded complex 
\[
\textstyle 0\rightarrow 
\bigoplus_{j=1}^{b_g} \widetilde{S}(-a_{g,j})
\stackrel{\widetilde{\varphi_g}}{\rightarrow} \cdots 
\rightarrow\bigoplus_{j=1}^{b_i}
\widetilde{S}(-a_{i,j})\stackrel{\widetilde{\varphi}_i}{\rightarrow}\cdots
\rightarrow\bigoplus_{j=1}^{b_1}
\widetilde{S}(-a_{1,j})\stackrel{\widetilde{\varphi}_1}{\rightarrow}
\widetilde{S}\rightarrow \widetilde{S}/\widetilde{I} \rightarrow 0.
\]

To show that this complex is exact, by the Buchsbaum-Eisenbud acyclicity criterion
\cite[Theorem~1.4.13, p.~24]{BHer}, it suffices to verify that 
${\rm ht}(I_{r_i}(\widetilde{\varphi}_i))$ is at least $i$ for $i\geq 1$. As
${\mathbf F}$ is an exact complex, using Lemma~\ref{jul8-12}, we get
$$
{\rm ht}(I_{r_i}(\widetilde{\varphi}_i))={\rm
ht}(\widetilde{I_{r_i}(\varphi_i)})\geq {\rm
ht}(I_{r_i}(\varphi_i))\geq i,
$$
as required.
\end{proof}

\begin{theorem}\label{aug23-12} Let $I$ be a graded ideal of $S$ and
let $F_I(t)$ be the Hilbert series of $S/I$. If
$K[f_1,\ldots,f_s]\subset\widetilde{S}$ is an integral extension , then

$(\mathrm{a})$
$F_{\widetilde{I}}(t)=\lambda_1(t)\cdots\lambda_s(t)F_I(t)$, where
$\lambda_i(t)=1+t+\cdots+t^{d_i-1}$.

$(\mathrm{b})$ ${\rm reg}(S/I)={\rm
reg}(\widetilde{S}/\widetilde{I})$.
\end{theorem}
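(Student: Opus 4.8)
The plan is to deduce both statements from Lemma~\ref{jul9-12}, which under the standing integrality hypothesis guarantees that the minimal graded free resolutions $\mathbf{F}$ of $S/I$ and $\widetilde{\mathbf{F}}$ of $\widetilde{S}/\widetilde{I}$ have exactly the same shape: the same homological length $g$, the same Betti numbers $b_i$, and---crucially---the same twists $a_{i,j}$ at every homological degree. The only difference between the two resolutions is the grading carried by the ambient polynomial ring: in $\mathbf{F}$ the variable $t_i$ has degree $d_i$, while in $\widetilde{\mathbf{F}}$ it has degree $1$. Everything will follow by reading off the two invariants from this common combinatorial data.

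For part $(\mathrm{a})$, I would compute each Hilbert series as the alternating sum of the Hilbert series of the free modules appearing in its resolution, which is valid because the resolution is an exact complex. Writing $K(t)=\sum_{i=0}^g(-1)^i\sum_{j=1}^{b_i}t^{a_{i,j}}$ for the numerator polynomial determined by the common twists (the $i=0$ term contributing $1$), and using that the Hilbert series of $S$ and of $\widetilde{S}$ are $\prod_{i=1}^s(1-t^{d_i})^{-1}$ and $(1-t)^{-s}$ respectively, I obtain
\[
F_I(t)=\frac{K(t)}{\prod_{i=1}^s(1-t^{d_i})},\qquad
F_{\widetilde{I}}(t)=\frac{K(t)}{(1-t)^s}.
\]
Since the numerator $K(t)$ is identical in both expressions, dividing gives
\[
\frac{F_{\widetilde{I}}(t)}{F_I(t)}=\prod_{i=1}^s\frac{1-t^{d_i}}{1-t}=\prod_{i=1}^s\lambda_i(t),
\]
which is exactly the claimed identity.

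For part $(\mathrm{b})$, I would simply invoke the definition ${\rm reg}(S/I)=\max\{j-i\mid b_{i,j}\neq 0\}$. Since Lemma~\ref{jul9-12} shows that $\widetilde{\mathbf{F}}$ has the same graded Betti numbers $b_{i,j}$ as $\mathbf{F}$---the twists $a_{i,j}$ coincide, hence so do the multiplicities $b_{i,j}$ of each $S(-j)$---the set $\{(i,j)\mid b_{i,j}\neq 0\}$ is the same for both modules, and therefore the two regularities agree.

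I do not expect any genuine obstacle at this stage: the substance of the argument is entirely contained in Lemma~\ref{jul9-12}, whose proof (via the Buchsbaum-Eisenbud acyclicity criterion together with the height equality of Lemma~\ref{jul8-12}) establishes that homogenization transports the minimal resolution verbatim up to regrading. The only point requiring a small amount of care is the bookkeeping of the numerator $K(t)$: one must check that it is literally the same polynomial in both Hilbert-series computations, which is immediate once the twists are known to coincide. Everything else is the standard formalism of Hilbert series of graded free resolutions.
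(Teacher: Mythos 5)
Your proposal is correct and follows essentially the same route as the paper: both parts are deduced from Lemma~\ref{jul9-12}, with (a) obtained by the additivity of Hilbert series over the transported resolution together with the relation between the Hilbert series of $S$ and $\widetilde{S}$, and (b) read off directly from the fact that the twists $a_{i,j}$ coincide. The only difference is that you make the common numerator polynomial $K(t)$ explicit, which the paper leaves implicit.
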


\begin{proof} (a): The Hilbert series of $\widetilde{S}$ and $S$,
denoted by $F(\widetilde{S},t)$ and $F(S,t)$ respectively, 
are
related by $F(\widetilde{S},t)=\lambda_1(t)\cdots\lambda_s(t)F(S,t)$.
Hence, using Lemma~\ref{jul9-12} and the additivity of Hilbert
series, we get the required equality. 

(b): This follows at once from Lemma~\ref{jul9-12}
\end{proof}

\begin{lemma}\label{neves-lemma}  
Let $I\subset S$ be a graded ideal and let
$f(t)/\prod_{i=1}^s(1-t^{d_i})$ be the 
Hilbert series of $S/I$, where $f(t)\in\mathbb{Z}[t]$. If $J \subset S$
is the ideal generated 
by all $g(t_1^r,\ldots,t_s^r)$ with $g\in I$, then 
\begin{itemize}
\item[(a)] $f(t^r)/\prod_{i=1}^s(1-t^{d_i})$ is the
Hilbert series of $S/J$.

\item[(b)] If $d_i=1$ for all $i$ and $I$ is a Cohen-Macaulay
ideal such that ${\rm ht}(I)={\rm ht}(J)$, then 
$${\rm reg}(S/J)={\rm ht}(I)(r-1)+r\cdot{\rm reg}(S/I).
$$
\end{itemize}
\end{lemma}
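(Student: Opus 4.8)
The plan is to realize both statements through the single ring map $\psi\colon S\to S$ with $\psi(t_i)=t_i^r$, so that $J=\psi(I)S$. Writing $A=K[t_1^r,\ldots,t_s^r]=\psi(S)$, the key structural fact is that $S$ is a free $A$-module with homogeneous basis $\{t^c : 0\le c_i< r\}$ of size $r^s$, i.e. $S=\bigoplus_{c}A\,t^c$. Since $\psi\colon S\to A$ is a graded isomorphism that multiplies degrees by $r$, the ideal $\psi(I)\subset A$ satisfies $A/\psi(I)\cong S/I$ with degrees scaled by $r$, so the Hilbert series of $A/\psi(I)$ is $F_I(t^r)$.

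For part (a) I would base change along the free extension: because $S$ is free over $A$,
$$S/J=S\otimes_A\bigl(A/\psi(I)\bigr)\cong\bigoplus_{0\le c_i< r}\bigl(A/\psi(I)\bigr)\bigl(-\sum_{i}c_id_i\bigr)$$
as graded $A$-modules. Summing Hilbert series gives $F_J(t)=\bigl(\prod_i\sum_{j=0}^{r-1}t^{jd_i}\bigr)F_I(t^r)$. Replacing each inner sum by $(1-t^{rd_i})/(1-t^{d_i})$ and using $F_I(t^r)=f(t^r)/\prod_i(1-t^{rd_i})$, the factors $\prod_i(1-t^{rd_i})$ cancel and leave $f(t^r)/\prod_i(1-t^{d_i})$, as claimed. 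This step needs neither the Cohen--Macaulay hypothesis nor $d_i=1$.

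For part (b), with $d_i=1$, I first read off the $a$-invariants from (a): since the $a$-invariant is the degree of the Hilbert series as a rational function, $F_I(t)=f(t)/(1-t)^s$ and $F_J(t)=f(t^r)/(1-t)^s$ give $a(S/J)=r\deg f-s=r\,a(S/I)+s(r-1)$. It then remains to invoke Theorem~\ref{jul10-12}, which for $d_i=1$ reads $\mathrm{reg}(S/I)=a(S/I)+\mathrm{depth}(S/I)$ and likewise for $J$; since $I$ is Cohen--Macaulay, $\mathrm{depth}(S/I)=\dim(S/I)=s-\mathrm{ht}(I)$. Granting that $S/J$ is Cohen--Macaulay of dimension $s-\mathrm{ht}(I)$, the arithmetic collapses to $\mathrm{reg}(S/J)=a(S/J)+(s-\mathrm{ht}(I))=r\,a(S/I)+s(r-1)+(s-\mathrm{ht}(I))=r\,\mathrm{reg}(S/I)+(r-1)\mathrm{ht}(I)$.

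The crux, and the step I expect to be the main obstacle, is proving that $S/J$ is Cohen--Macaulay so that the formula $\mathrm{reg}=a+\mathrm{depth}$ applies to $J$. Here I would again exploit freeness: the displayed decomposition shows that $S/J$ is, as a graded $A$-module, a finite direct sum of shifted copies of the Cohen--Macaulay module $A/\psi(I)\cong S/I$, hence Cohen--Macaulay over $A$ with $\mathrm{depth}_A(S/J)=\dim_A(S/J)=s-\mathrm{ht}(I)$ (this also re-proves $\mathrm{ht}(J)=\mathrm{ht}(I)$, so the height hypothesis is automatic). To transfer this to the $S$-module structure I would use that $A\subset S$ is module-finite and that $\mathfrak{m}_AS=(t_1^r,\ldots,t_s^r)S$ is $\mathfrak{m}_S$-primary: by the independence property of local cohomology, $H^i_{\mathfrak{m}_A}(S/J)\cong H^i_{\mathfrak{m}_AS}(S/J)=H^i_{\mathfrak{m}_S}(S/J)$ for all $i$, the last equality because local cohomology depends only on the radical of the defining ideal. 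Therefore $\mathrm{depth}_S(S/J)=\mathrm{depth}_A(S/J)=s-\mathrm{ht}(I)=\dim_S(S/J)$, which is precisely Cohen--Macaulayness of $S/J$. Care is needed to keep the grading shifts consistent and to justify the local cohomology comparison, but no deeper input is required.
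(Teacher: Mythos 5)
Your proposal is correct, but it takes a genuinely different route from the paper. The paper proves (a) by transporting the minimal graded free resolution of $S/I$ along the substitution $t_i\mapsto t_i^r$ (so the shifts $a_{i,j}$ become $ra_{i,j}$) and verifying exactness of the resulting complex with the Buchsbaum--Eisenbud acyclicity criterion, exactly as in Lemma~\ref{jul9-12}; the Hilbert series then follows from additivity, and (b) follows from (a) together with Theorem~\ref{jul10-12}, the Cohen--Macaulayness of $S/J$ being immediate from the fact that the transported resolution has the same length as that of $S/I$. You instead exploit the freeness of $S$ over $A=K[t_1^r,\ldots,t_s^r]$ with monomial basis $\{t^c: 0\le c_i<r\}$, decompose $S/J=S\otimes_A(A/\psi(I))$ into shifted copies of the degree-rescaled $S/I$, sum Hilbert series for (a), and for (b) transfer Cohen--Macaulayness across the finite extension $A\subset S$ via independence of base for local cohomology and $\sqrt{\mathfrak{m}_AS}=\mathfrak{m}_S$. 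Both arguments are sound; the paper's buys the full minimal resolution of $S/J$ (hence all graded Betti numbers, and the regularity directly), whereas yours avoids the acyclicity criterion entirely and makes the Hilbert series computation in (a) completely elementary, at the cost of needing the depth-transfer step for (b). As a remark, your freeness observation would also reprove the paper's resolution statement more cheaply: tensoring a minimal $A$-free resolution of $A/\psi(I)$ with the flat $A$-module $S$ yields the resolution $0\to\bigoplus_j S(-ra_{g,j})\to\cdots\to S\to S/J\to 0$ with no recourse to Buchsbaum--Eisenbud, and minimality is preserved since the differentials have entries in $\mathfrak{m}_A\subset\mathfrak{m}_S$.
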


\begin{proof} (a): Clearly $J$ is also a graded ideal of $S$. If 
\[
\ \ \ \ \ \ \ \ \ \ \ \ \textstyle 0\rightarrow 
\bigoplus_{j=1}^{b_g} S(-a_{g,j})
{\rightarrow}\cdots
\rightarrow\bigoplus_{j=1}^{b_1}
S(-a_{1,j}){\rightarrow} S\rightarrow
S/I\rightarrow 0 
\]
is the minimal graded free resolution of $S/I$, then it is seen that
\[
\ \ \ \ \ \ \ \ \ \ \ \ \ \ \ \ \ \ \ \ \ \ \ \ \textstyle 0\rightarrow 
\bigoplus_{j=1}^{b_g}S(-ra_{g,j})
{\rightarrow}\cdots
\rightarrow\bigoplus_{j=1}^{b_1}
S(-ra_{1,j}){\rightarrow}
S\rightarrow S/J\rightarrow 0
\]
is the minimal graded free resolution of $S/J$. This can be shown
using the method of proof of Lemma~\ref{jul9-12}. Hence, by the
additivity of the Hilbert series, the result follows. 

(b): This follows from part (a) and Theorem~\ref{jul10-12}. 
\end{proof}

\section{Complete intersections and algebraic invariants}\label{ci-section}

We continue to use the notation and definitions used in
Section~\ref{intro-ci-mcurves}. In this section, we establish a 
map $I\mapsto \widetilde{I}$ between the family of graded
lattice ideals of a positively graded
polynomial ring and the graded lattice ideals of a polynomial ring
with the standard grading. We relate the lattices and generators of $I$ and
$\widetilde{I}$, then we show that $I$ is a complete intersection
if and only if $\widetilde{I}$ is a complete intersection. 

Let $K$ be a field and let $S=K[t_1,\ldots,t_s]=\oplus_{d=0}^\infty
S_d$ and $\widetilde{S}=K[t_1,\ldots,t_s]=\oplus_{d=0}^\infty
\widetilde{S}_d$ be polynomial rings 
graded by the grading induced by setting $\deg(t_i)=d_i$ for
all $i$ and the standard grading induced by
setting $\deg(t_i)=1$ for all $i$, respectively, where
$d_1,\ldots,d_s$ are positive integers.

Throughout this section, $\mathcal{L}\subset\mathbb{Z}^s$ will denote a {\it 
homogeneous lattice}, with respect to the positive vector 
$\mathbf{d}=(d_1,\ldots,d_s)$, i.e., $\langle\mathbf{d},a\rangle=0$
for $a\in\mathcal{L}$, and $I(\mathcal{L})\subset S$ will denote the 
graded lattice ideal of $\mathcal{L}$. 

Let $D$ be the non-singular diagonal matrix $D={\rm diag}(d_1,\ldots,d_s)$.
Consider the homomorphism of $\mathbb{Z}$-modules:
\begin{eqnarray*} 
D\colon\mathbb{Z}^s\rightarrow\mathbb{Z}^s,& e_i\mapsto d_ie_i.&
\end{eqnarray*}

The lattice $\widetilde{\mathcal{L}}=D(\mathcal{L})$ 
is called the {\it homogenization} of $\mathcal{L}$ with respect to 
$d_1,\ldots,d_s$.  Notice that
$\widetilde{I(\mathcal{L})}=I(\widetilde{\mathcal{L}})$, i.e.,
$I(\widetilde{\mathcal{L}})$ is the homogenization of $I(\mathcal{L})$
with respect to $t_1^{d_1},\ldots,t_s^{d_s}$. The lattices that
define the lattice ideals $I(\mathcal{L})$ and
$I(\widetilde{\mathcal{L}})$ are homogeneous with respect to
the vectors $\mathbf{d}=(d_1,\ldots,d_s)$ and
$\mathbf{1}=(1,\ldots,1)$, respectively. 

\begin{lemma}\label{dec19-11} The map $t^a-t^b\mapsto t^{D(a)}-t^{D(b)}$ induces a
bijection between the binomials $t^a-t^b$ of $I(\mathcal{L})$ whose terms $t^a$,
$t^b$ have disjoint support and the binomials $t^{a'}-t^{b'}$ of
$I(\widetilde{\mathcal{L}})$ whose terms
$t^{a'}$, $t^{b'}$ have disjoint support. 
\end{lemma}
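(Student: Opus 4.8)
The plan is to reduce the statement about disjoint-support binomials in the two ideals to an elementary statement about the lattices $\mathcal{L}$ and $\widetilde{\mathcal{L}}=D(\mathcal{L})$, using the fundamental description of the disjoint-support binomials that lie in a lattice ideal. Concretely, I would first record the key characterization: for $a,b\in\mathbb{N}^s$ with disjoint support one has $t^a-t^b\in I(\mathcal{L})$ if and only if $a-b\in\mathcal{L}$. The implication $(\Leftarrow)$ is immediate from the definition of $I(\mathcal{L})$, since disjoint support of $a,b$ means $(a-b)^+=a$ and $(a-b)^-=b$, so that $t^a-t^b=t^{(a-b)^+}-t^{(a-b)^-}$ is one of the generators. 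The implication $(\Rightarrow)$ is the content-bearing part: it expresses that the lattice is recovered from its ideal as the set of exponent-differences of its disjoint-support binomials, which follows from the structure theory of lattice ideals in \cite{EisStu}, the relevant input being that each $t_i$ is a non-zerodivisor on $S/I(\mathcal{L})$ (Theorem~\ref{jun12-02}). The same characterization applies verbatim to $I(\widetilde{\mathcal{L}})$ and the lattice $\widetilde{\mathcal{L}}$.

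Next I would establish the elementary properties of $D$ that make the translation work. Since $D={\rm diag}(d_1,\dots,d_s)$ has positive diagonal entries, the coordinate $D(c)_i=d_ic_i$ has the same sign as $c_i$; hence $D$ preserves supports, ${\rm supp}(D(c))={\rm supp}(c)$, and commutes with taking positive and negative parts, $D(c)^+=D(c^+)$ and $D(c)^-=D(c^-)$. In particular $a,b$ have disjoint support if and only if $D(a),D(b)$ do, and $D$ is injective because it is non-singular. Combining these with the characterization yields well-definedness and injectivity of the map $t^a-t^b\mapsto t^{D(a)}-t^{D(b)}$: if $t^a-t^b$ is a disjoint-support binomial of $I(\mathcal{L})$ then $a-b\in\mathcal{L}$, so $D(a)-D(b)=D(a-b)\in D(\mathcal{L})=\widetilde{\mathcal{L}}$, and since $D(a),D(b)$ still have disjoint support the image $t^{D(a)}-t^{D(b)}$ is a disjoint-support binomial of $I(\widetilde{\mathcal{L}})$; injectivity then follows at once from injectivity of $D$.

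For surjectivity, which I expect to be the step needing genuine care, I would start from a disjoint-support binomial $t^{a'}-t^{b'}\in I(\widetilde{\mathcal{L}})$, so that $a'-b'\in\widetilde{\mathcal{L}}=D(\mathcal{L})$, say $a'-b'=D(c)$ with $c\in\mathcal{L}$. The point is that disjoint support forces the correct divisibility: for each index $i$ at least one of $a'_i,b'_i$ vanishes, and comparing this with $a'_i-b'_i=d_ic_i$ shows $a'_i=d_ic_i^+$ and $b'_i=d_ic_i^-$, i.e. $a'=D(c^+)$ and $b'=D(c^-)$. Setting $a=c^+$ and $b=c^-$ then exhibits a disjoint-support binomial $t^a-t^b$ with $a-b=c\in\mathcal{L}$, hence lying in $I(\mathcal{L})$, whose image under the map is exactly $t^{a'}-t^{b'}$. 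The only mild subtlety is the sign ambiguity $t^a-t^b=-(t^b-t^a)$, which I would handle by regarding binomials up to this sign (equivalently, noting that $c$ and $-c$ produce the same binomial); with that convention the correspondence is an honest bijection. Thus the real obstacle is the lattice-theoretic characterization of disjoint-support binomials; once it is in hand, the entire lemma reduces to bookkeeping with the injective, support-preserving map $D$.
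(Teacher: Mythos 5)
Your proposal is correct and follows essentially the same route as the paper: both rest on the characterization that a disjoint-support binomial $t^a-t^b$ lies in $I(\mathcal{L})$ exactly when $a-b\in\mathcal{L}$, observe that $D$ preserves supports and positive/negative parts to get well-definedness and injectivity, and prove surjectivity by writing $a'-b'=D(c^+)-D(c^-)$ and using disjointness of supports to conclude $a'=D(c^+)$, $b'=D(c^-)$. The only difference is that you spell out the underlying characterization (and the sign convention) which the paper uses tacitly.
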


\begin{proof} If $f=t^a-t^b$ is a binomial of $I(\mathcal{L})$ whose terms have
disjoint support, then $a-b\in\mathcal{L}$. Consequently, the terms of
$\widetilde{f}=t^{D(a)}-t^{D(b)}$ have disjoint support because 
$${\rm supp}(t^a)={\rm supp}(t^{D(a)})\  \mbox{ and }\ {\rm supp}(t^b)={\rm
supp}(t^{D(b)}),
$$ 
and $\widetilde{f}$ is in
$I(\widetilde{\mathcal{L}})$ because
$D(a)-D(b)\in\widetilde{\mathcal{L}}$. Thus,
the map is well defined.  The map is clearly
injective. To show that the map is onto, take a binomial
$f'=t^{a'}-t^{b'}$ in $I(\widetilde{\mathcal{L}})$ such
that $t^{a'}$ and $t^{b'}$ have disjoint support. Hence,
$a'-b'\in\widetilde{\mathcal{L}}$ and there is 
$c=c^+-c^-\in\mathcal{L}$ such that $a'-b'=D(c^+)-D(c^-)$. As $a'$
and $b'$ have disjoint support, we get $a'=D(c^+)$ and 
$b'=D(c^-)$. Thus, the binomial $f=t^{c^+}-t^{c^{-}}$ is in
$I(\mathcal{L})$ and maps to $t^{a'}-t^{b'}$.
\end{proof}

\begin{theorem}\label{dec22-11} Let
$\mathcal{B}=\{t^{a_i}-t^{b_i}\}_{i=1}^m$ be a set of binomials. If 
$\widetilde{\mathcal{B}}=\{t^{D(a_i)}-t^{D(b_i)}\}_{i=1}^m$, then
$I(\mathcal{L})=(\mathcal{B})$ if and only if
$I(\widetilde{\mathcal{L}})=(\widetilde{\mathcal{B}})$. 
\end{theorem}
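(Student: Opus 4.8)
The plan is to prove both implications by transporting the equality of ideals across the homogenization operation $I\mapsto\widetilde I$ of Section~\ref{hsai}, using the two identities $\widetilde{I(\mathcal{L})}=I(\widetilde{\mathcal{L}})$ (noted in the preamble to this section) and $\widetilde{(\mathcal{B})}=(\widetilde{\mathcal{B}})$. The latter holds because homogenization is computed on any generating set by applying $\phi$ termwise and is independent of that set; since $f_i=t_i^{d_i}$, one has $\phi(t^{c})=t^{D(c)}$ for every exponent vector $c$, so $\widetilde{\mathcal{B}}=\phi(\mathcal{B})$ is exactly the generating set obtained from $\mathcal{B}$. The backbone of the argument is the \emph{membership equivalence}: for each $i$,
\[
t^{a_i}-t^{b_i}\in I(\mathcal{L})\iff t^{D(a_i)}-t^{D(b_i)}\in I(\widetilde{\mathcal{L}}).
\]
I would establish this by reducing to the disjoint-support case: writing $m=\min(a_i,b_i)$ componentwise, factor $t^{a_i}-t^{b_i}=t^{m}(t^{a_i'}-t^{b_i'})$ with $a_i',b_i'$ of disjoint support, and note $D(m)=\min(D(a_i),D(b_i))$, so $t^{D(a_i)}-t^{D(b_i)}=t^{D(m)}(t^{D(a_i')}-t^{D(b_i')})$. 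Because the $t_j$ are nonzerodivisors modulo a lattice ideal (Theorem~\ref{jun12-02}), membership is unchanged by removing the common monomial factors on both sides; Lemma~\ref{dec19-11} then matches the disjoint-support binomials $t^{a_i'}-t^{b_i'}$ and $t^{D(a_i')}-t^{D(b_i')}$. Moreover, whenever $t^{a_i}-t^{b_i}\in I(\mathcal{L})$ one has $a_i-b_i\in\mathcal{L}$, so $\langle\mathbf{d},a_i-b_i\rangle=0$ by homogeneity of $\mathcal{L}$; hence each such binomial is homogeneous in $S$ and $(\mathcal{B})$ is then a graded ideal.

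The forward implication is then immediate. If $I(\mathcal{L})=(\mathcal{B})$, applying homogenization gives $\widetilde{I(\mathcal{L})}=\widetilde{(\mathcal{B})}$, and rewriting the two sides via the identities above yields $I(\widetilde{\mathcal{L}})=(\widetilde{\mathcal{B}})$.

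For the reverse implication, assume $I(\widetilde{\mathcal{L}})=(\widetilde{\mathcal{B}})$. Then $\widetilde{\mathcal{B}}\subseteq I(\widetilde{\mathcal{L}})$, so the membership equivalence gives $t^{a_i}-t^{b_i}\in I(\mathcal{L})$ for all $i$; thus $(\mathcal{B})\subseteq I(\mathcal{L})$ and, by the homogeneity remark, $(\mathcal{B})$ is a graded ideal. Consequently $\widetilde{(\mathcal{B})}=(\widetilde{\mathcal{B}})=I(\widetilde{\mathcal{L}})=\widetilde{I(\mathcal{L})}$, so $\widetilde{S}/\widetilde{(\mathcal{B})}$ and $\widetilde{S}/\widetilde{I(\mathcal{L})}$ have equal Hilbert series. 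By Theorem~\ref{aug23-12}(a) these series are $\lambda_1(t)\cdots\lambda_s(t)F_{(\mathcal{B})}(t)$ and $\lambda_1(t)\cdots\lambda_s(t)F_{I(\mathcal{L})}(t)$ respectively; cancelling the nonzero factor $\lambda_1(t)\cdots\lambda_s(t)$ forces $F_{(\mathcal{B})}(t)=F_{I(\mathcal{L})}(t)$, hence $\dim_K\bigl(S/(\mathcal{B})\bigr)_d=\dim_K\bigl(S/I(\mathcal{L})\bigr)_d$ for every $d$. Since $(\mathcal{B})\subseteq I(\mathcal{L})$, the surjection $\bigl(S/(\mathcal{B})\bigr)_d\twoheadrightarrow\bigl(S/I(\mathcal{L})\bigr)_d$ between finite-dimensional spaces of equal dimension is an isomorphism, so $(\mathcal{B})_d=I(\mathcal{L})_d$ for all $d$ and $(\mathcal{B})=I(\mathcal{L})$.

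The step I expect to be the genuine obstacle is the reverse implication, precisely because homogenization does not reflect inclusions: one cannot descend the equality $\widetilde{(\mathcal{B})}=\widetilde{I(\mathcal{L})}$ directly. The resolution is to obtain the inclusion $(\mathcal{B})\subseteq I(\mathcal{L})$ independently, through the lattice-membership equivalence, and then to close the remaining gap numerically via the Hilbert-series identity of Theorem~\ref{aug23-12}(a). Verifying the membership equivalence (the disjoint-support reduction together with Lemma~\ref{dec19-11}) and confirming that $(\mathcal{B})$ is graded are the routine but essential technical points underpinning this step.
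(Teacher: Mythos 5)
Your proof is correct. The forward implication is in substance the same as the paper's: the paper proves the inclusion $I(\widetilde{\mathcal{L}})\subseteq(\widetilde{\mathcal{B}})$ by taking a disjoint-support binomial of $I(\widetilde{\mathcal{L}})$, pulling it back via Lemma~\ref{dec19-11}, writing the preimage as $\sum_i f_ig_i$ and evaluating at $(t_1^{d_1},\ldots,t_s^{d_s})$ --- which is exactly the content of the identity $\widetilde{I(\mathcal{L})}=I(\widetilde{\mathcal{L}})$ that you invoke from the section's preamble, combined with the independence of $\widetilde{I}$ from the choice of generators. Where you genuinely diverge is the reverse implication. The paper first notes (via Lemma~\ref{dec19-11}, essentially your membership equivalence) that the $g_i$ lie in $I(\mathcal{L})$, then compares minimal numbers of generators: by Lemma~\ref{jul9-12} the ideals $I(\mathcal{L})$ and $I(\widetilde{\mathcal{L}})$ have the same first Betti number $r=\dim_K I(\mathcal{L})/\mathfrak{m}I(\mathcal{L})$, the images of $g_1,\ldots,g_r$ in that vector space are linearly independent because their images $h_1,\ldots,h_r$ are a basis of $I(\widetilde{\mathcal{L}})/\mathfrak{m}I(\widetilde{\mathcal{L}})$, and Nakayama's lemma finishes. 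You instead establish the containment $(\mathcal{B})\subseteq I(\mathcal{L})$ and close the gap by cancelling $\lambda_1(t)\cdots\lambda_s(t)$ in the Hilbert-series identity of Theorem~\ref{aug23-12}(a), forcing equality degree by degree. Both routes ultimately rest on Lemma~\ref{jul9-12} (yours through Theorem~\ref{aug23-12}, the paper's through the equality of first Betti numbers); your version avoids the slightly delicate linear-independence transfer between the two Nakayama quotients at the cost of having to verify that $(\mathcal{B})$ is a graded ideal of $S$, which you correctly do via the homogeneity of $\mathcal{L}$. Your identification of the reverse implication as the real obstacle, and of the need to obtain $(\mathcal{B})\subseteq I(\mathcal{L})$ independently rather than trying to descend the equality through homogenization, matches the actual structure of the difficulty.
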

\begin{proof} We set $g_i=t^{a_i}-t^{b_i}$ and
$h_i=t^{D(a_i)}-t^{D(b_i)}$ for $i=1,\ldots,m$. Notice that
$h_i$ is equal to $\widetilde{g}_i=g_i(t^{d_1},\ldots,t^{d_s})$, the
evaluation of 
$g_i$ at $(t_1^{d_1},\ldots,t_s^{d_s})$. 

$\Rightarrow$) By Lemma~\ref{dec19-11}, one has
the inclusion $(\widetilde{\mathcal{B}})\subset
I(\widetilde{\mathcal{L}})$. To show the reverse 
inclusion take a binomial $0\neq f\in I(\widetilde{\mathcal{L}})$. We
may assume that 
$f=t^{a^+}-t^{a^-}$. Then, by Lemma~\ref{dec19-11}, there is
$g=t^{c^+}-t^{c^-}$ in $I(\mathcal{L})$ such that $f=t^{D(c^+)}-t^{D(c^-)}$. By
hypothesis we can write $g=\sum_{i=1}^mf_ig_i$ for some
$f_1,\ldots,f_m$ in $S$. Then, evaluating both sides of this equality at
$(t_1^{d_1},\ldots,t_s^{d_s})$, we get
$$
f=t^{D(c^+)}-t^{D(c^-)}=g(t_1^{d_1},\ldots,t_s^{d_s})
=\sum_{i=1}^mf_i(t_1^{d_1},\ldots,t_s^{d_s})g_i(t_1^{d_1},\ldots,t_s^{d_s})=
\sum_{i=1}^m\widetilde{f}_ih_i,
$$
where $\widetilde{f}_i=f_i(t_1^{d_1},\ldots,t_s^{d_s})$ for all $i$. Then, $f\in
(\widetilde{\mathcal{B}})$. 

$\Leftarrow$) We may assume that $h_1,\ldots,h_r$ is a minimal set of
generators of $I(\mathcal{L})$ for some $r\leq m$. Consider the
$K$-vector spaces
$\widetilde{V}=I(\widetilde{\mathcal{L}})/\mathfrak{m}I(\widetilde{\mathcal{L}})$
and
$V=I(\mathcal{L})/\mathfrak{m}I(\mathcal{L})$, where
$\mathfrak{m}=(t_1,\ldots,t_s)$. Since the images, in
$\widetilde{V}$, of
$h_1,\ldots,h_r$ form a $K$-basis for $\widetilde{V}$, it follows
that the images, in $V$, of $g_1,\ldots,g_r$ are linearly 
independent. On the other hand, by Lemma~\ref{jul9-12}, the minimum
number of generators of $I(\mathcal{L})$ is also $r$ and
$r=\dim_K(V)$. Thus, the images, in $V$, of  $g_1,\ldots,g_r$ form a $K$-basis
for $V$. Consequently $\mathcal{B}$ generates $I(\mathcal{L})$ by
Nakayama's lemma (see \cite[Proposition~2.5.1 and
Corollary~2.5.2]{monalg}).
\end{proof}

\begin{definition} An ideal $I\subset S$ is called a {\it complete
intersection\/} if  
there exists  $g_1,\ldots,g_m$ such that $I=(g_1,\ldots,g_m)$, 
where $m$ is the height of $I$. 
\end{definition}

Recall that a graded binomial ideal $I\subset S$ is a complete
intersection if and only if $I$ is generated by a homogeneous regular 
sequence, consisting of binomials, with ${\rm ht}(I)$ elements 
(see for instance \cite[Proposition~1.3.17, Lemma~1.3.18]{monalg}).  

\begin{corollary}\label{ci-i(x)-p}  $I(\widetilde{\mathcal{L}})$ is
a complete intersection $($resp. Cohen-Macaulay, Gorenstein$)$ if 
and only if $I(\mathcal{L})$ is a complete intersection $($resp.
Cohen-Macaulay, Gorenstein$)$. 
\end{corollary}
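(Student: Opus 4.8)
The plan is to reduce all three equivalences to the structural comparison of $I(\mathcal{L})$ and $I(\widetilde{\mathcal{L}})$ supplied by Lemmas~\ref{jul8-12}, \ref{jul9-12} and Theorem~\ref{dec22-11}. First I would record that in this section $f_i=t_i^{d_i}$, so $K[F]=K[t_1^{d_1},\ldots,t_s^{d_s}]$ and each $t_i$ is a root of the monic polynomial $x^{d_i}-t_i^{d_i}$ over $K[F]$; hence $K[F]\subset\widetilde{S}$ is an integral extension and both Lemma~\ref{jul8-12} and Lemma~\ref{jul9-12} apply to $I=I(\mathcal{L})$. From Lemma~\ref{jul8-12} we get $\dim(S/I(\mathcal{L}))=\dim(\widetilde{S}/I(\widetilde{\mathcal{L}}))$, and since $S$ and $\widetilde{S}$ are polynomial rings in the same $s$ variables this gives the height equality ${\rm ht}(I(\mathcal{L}))={\rm ht}(I(\widetilde{\mathcal{L}}))$ announced before the statement. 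From Lemma~\ref{jul9-12} the minimal graded free resolutions of $S/I(\mathcal{L})$ and $\widetilde{S}/I(\widetilde{\mathcal{L}})$ have identical graded Betti numbers $b_{i,j}$; in particular the same length $g$ (projective dimension) and the same top Betti number $b_g$.

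For the complete intersection assertion I would argue with generators, using the recalled characterization that a graded binomial ideal is a complete intersection precisely when it is generated by ${\rm ht}(I)$ binomials forming a regular sequence. If $I(\mathcal{L})$ is a complete intersection, choose such a set $\mathcal{B}$ of ${\rm ht}(I(\mathcal{L}))$ binomials, which we may take with disjoint-support terms since lattice ideals are generated by binomials of the form $t^{a^+}-t^{a^-}$. Theorem~\ref{dec22-11} then gives $I(\widetilde{\mathcal{L}})=(\widetilde{\mathcal{B}})$, so $I(\widetilde{\mathcal{L}})$ is generated by $|\widetilde{\mathcal{B}}|={\rm ht}(I(\mathcal{L}))={\rm ht}(I(\widetilde{\mathcal{L}}))$ elements and is a complete intersection. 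For the converse, I would start from a disjoint-support binomial generating set of $I(\widetilde{\mathcal{L}})$ of cardinality ${\rm ht}(I(\widetilde{\mathcal{L}}))$; by Lemma~\ref{dec19-11} each of its members is of the form $t^{D(a)}-t^{D(b)}$ for a disjoint-support binomial $t^a-t^b$ of $I(\mathcal{L})$, so this set equals $\widetilde{\mathcal{B}}$ for a binomial family $\mathcal{B}\subset I(\mathcal{L})$, and Theorem~\ref{dec22-11} yields $I(\mathcal{L})=(\mathcal{B})$ with ${\rm ht}(I(\mathcal{L}))$ generators, hence a complete intersection.

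The Cohen-Macaulay and Gorenstein cases I would deduce directly from the resolution comparison. By the Auslander-Buchsbaum formula, $S/I(\mathcal{L})$ is Cohen-Macaulay if and only if its projective dimension equals ${\rm ht}(I(\mathcal{L}))$; since Lemma~\ref{jul9-12} preserves the length $g$ of the minimal resolution and Lemma~\ref{jul8-12} preserves the height, this condition holds for $I(\mathcal{L})$ exactly when it holds for $I(\widetilde{\mathcal{L}})$. For the Gorenstein property I would invoke that a Cohen-Macaulay graded quotient is Gorenstein if and only if the last free module in its minimal graded resolution has rank one, that is $b_g=1$; as $b_g$ agrees for the two quotients by Lemma~\ref{jul9-12} and Cohen-Macaulayness has just been shown to transfer, the Gorenstein equivalence follows at once.

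I expect the only delicate point to be the complete intersection step, where one must be sure that the generating sets produced by the hypothesis consist of binomials whose number is exactly the height, and that in the converse direction they can be realized in the form $\widetilde{\mathcal{B}}$; both are handled by the recalled binomial complete intersection characterization together with Lemma~\ref{dec19-11} and the height equality. The Cohen-Macaulay and Gorenstein parts are then formal consequences of the Betti-number-preserving Lemma~\ref{jul9-12}.
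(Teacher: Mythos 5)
Your proof is correct and follows essentially the same route as the paper: the height equality combined with Theorem~\ref{dec22-11} for the complete intersection statement and with Lemma~\ref{jul9-12} (preservation of the graded Betti numbers, hence of projective dimension and of the last Betti number) for the Cohen--Macaulay and Gorenstein statements. The only cosmetic difference is that the paper obtains ${\rm ht}(I(\mathcal{L}))={\rm ht}(I(\widetilde{\mathcal{L}}))$ from the equality of the ranks of $\mathcal{L}$ and $D(\mathcal{L})$, whereas you derive it from the integrality of $K[F]\subset\widetilde{S}$ via Lemma~\ref{jul8-12}; both are valid.
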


\begin{proof} The rank of $\mathcal{L}$ is equal to the height of
$I(\mathcal{L})$ \cite[Proposition~7.5]{cca}. 
Since $D$ is non-singular, $\mathcal{L}$ and
$\widetilde{\mathcal{L}}=D(\mathcal{L})$ have the same rank. Thus, 
$I(\mathcal{L})$ and $I(\widetilde{\mathcal{L}})$
have the same height. Therefore, the result follows from
Theorem~\ref{dec22-11} and Lemma~\ref{jul9-12}. 
\end{proof}

\section{Lattice ideals of dimension $1$}\label{lattice-dim1-section}

We continue to use the notation and definitions used in
Section~\ref{ci-section}. In this section, we study the map $I\mapsto
\widetilde{I}$, when $I$ is a graded lattice ideal of dimension $1$.
In this case, we relate the degrees of $I$ and $\widetilde{I}$ and show a
formula for the regularity and the degree of $\widetilde{I}$ when $I$ is the
toric ideal of a monomial curve. We show that the regularity of the saturation of
certain one dimensional graded ideals is additive in a certain sense.

We begin by identifying some elements in the torsion subgroup of
$\mathbb{Z}^s/D(\mathcal{L})$. The proof of the following lemma is
straightforward.

\begin{lemma}\label{sep1-12-1} If $\mathcal{L}$ is a homogeneous
lattice, with respect 
to $d_1,\ldots,d_s$, of rank $s-1$, then for each $i,j$ there is 
a positive integer $\eta_{i,j}$ such that 
$\eta_{i,j}(d_je_i-d_ie_j)$ is in $\mathcal{L}$ and
$\eta_{i,j}(d_id_je_i-d_id_je_j)$ is in $D(\mathcal{L})$. In particular,
$e_i-e_j$ is in $T(\mathbb{Z}^s/D(\mathcal{L}))$ for any $i,j$. 
\end{lemma}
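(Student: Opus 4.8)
The plan is to exploit the homogeneity hypothesis to realize $\mathcal{L}$ as a finite-index sublattice of the hyperplane lattice $\mathbf{d}^\perp=\{a\in\mathbb{Z}^s\mid\langle\mathbf{d},a\rangle=0\}$, and then to observe that each candidate vector $d_je_i-d_ie_j$ already lies in that hyperplane lattice. First I would record that the homogeneity condition $\langle\mathbf{d},a\rangle=0$ for all $a\in\mathcal{L}$ is precisely the inclusion $\mathcal{L}\subseteq\mathbf{d}^\perp$. Since $\mathbf{d}\neq 0$, the evaluation map $a\mapsto\langle\mathbf{d},a\rangle$ from $\mathbb{Z}^s$ to $\mathbb{Z}$ has nonzero, hence free rank-one, image, so its kernel $\mathbf{d}^\perp$ is a free abelian group of rank $s-1$. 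By hypothesis $\mathcal{L}$ also has rank $s-1$; thus $\mathcal{L}$ is a full-rank subgroup of $\mathbf{d}^\perp$, and a full-rank subgroup of a finitely generated free abelian group has finite index.

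Next I would compute that $d_je_i-d_ie_j\in\mathbf{d}^\perp$, since $\langle\mathbf{d},d_je_i-d_ie_j\rangle=d_jd_i-d_id_j=0$. Because $\mathcal{L}$ has finite index in $\mathbf{d}^\perp$, some positive multiple of $d_je_i-d_ie_j$ lies in $\mathcal{L}$; that is, there is a positive integer $\eta_{i,j}$ with $\eta_{i,j}(d_je_i-d_ie_j)\in\mathcal{L}$, which is the first assertion. Applying the diagonal map $D$ and using $D(e_k)=d_ke_k$ then yields $\eta_{i,j}(d_id_je_i-d_id_je_j)=\eta_{i,j}\,d_id_j(e_i-e_j)\in D(\mathcal{L})$, the second assertion. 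Finally, since $\eta_{i,j}\,d_id_j$ is a positive integer annihilating the class of $e_i-e_j$ in $\mathbb{Z}^s/D(\mathcal{L})$, that class is torsion, i.e. $e_i-e_j\in T(\mathbb{Z}^s/D(\mathcal{L}))$.

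There is no genuine obstacle here, consistent with the lemma being labelled straightforward. The only point warranting a word of justification is the finite-index claim, which rests on the elementary structure theory of free abelian groups (a rank-$(s-1)$ subgroup of the rank-$(s-1)$ lattice $\mathbf{d}^\perp$ has finite index); every other step is a direct verification, so I would keep those computations brief.
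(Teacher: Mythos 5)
Your argument is correct: the paper itself omits the proof (declaring it straightforward), and your route --- noting that $\mathcal{L}$ is a full-rank, hence finite-index, sublattice of the hyperplane lattice $\mathbf{d}^\perp$, that $d_je_i-d_ie_j\in\mathbf{d}^\perp$, and then applying $D$ --- is exactly the natural argument the authors intend. All steps check out, including the final observation that $\eta_{i,j}d_id_j$ annihilates the class of $e_i-e_j$ in $\mathbb{Z}^s/D(\mathcal{L})$.
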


\begin{lemma}\label{aug22-12-x} Let $\mathcal{L}\subset\mathbb{Z}^{s}$
be a homogeneous lattice of rank $s-1$. If $\gcd(d_1,\ldots,d_s)=1$, 
then 
$$
|T(\mathbb{Z}^s/D(\mathcal{L}))|=
|\mathbb{Z}^s/D(\mathbb{Z}^s)|\, |T(\mathbb{Z}^s/\mathcal{L})|
=\det(D)|T(\mathbb{Z}^s/\mathcal{L})|.
$$
\end{lemma}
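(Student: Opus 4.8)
The plan is to identify both torsion subgroups with concrete finite quotients and thereby reduce the statement to a single index computation. Write $\mathbf{d}=(d_1,\ldots,d_s)$ and $\mathbf{1}=(1,\ldots,1)$, and set
\[
\mathbf{d}^{\perp}=\{a\in\mathbb{Z}^s:\langle\mathbf{d},a\rangle=0\},\qquad
\mathbf{1}^{\perp}=\{a\in\mathbb{Z}^s:\langle\mathbf{1},a\rangle=0\}.
\]
Both are saturated sublattices of $\mathbb{Z}^s$ of rank $s-1$ (if $na\in\mathbf{d}^{\perp}$ with $n\neq 0$, then $a\in\mathbf{d}^{\perp}$, and similarly for $\mathbf{1}^{\perp}$). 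Since $\mathcal{L}$ is homogeneous with respect to $\mathbf{d}$, we have $\mathcal{L}\subseteq\mathbf{d}^{\perp}$; and since $\langle\mathbf{1},D(a)\rangle=\langle\mathbf{d},a\rangle$, the homogenized lattice satisfies $D(\mathcal{L})\subseteq\mathbf{1}^{\perp}$.

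First I would establish the reductions $T(\mathbb{Z}^s/\mathcal{L})=\mathbf{d}^{\perp}/\mathcal{L}$ and $T(\mathbb{Z}^s/D(\mathcal{L}))=\mathbf{1}^{\perp}/D(\mathcal{L})$. Indeed, in the short exact sequence $0\to\mathbf{d}^{\perp}/\mathcal{L}\to\mathbb{Z}^s/\mathcal{L}\to\mathbb{Z}^s/\mathbf{d}^{\perp}\to 0$ the first term is finite (a quotient of rank $s-1$ by rank $s-1$) while the last is torsion-free of rank $1$ (as $\mathbf{d}^{\perp}$ is saturated), so $\mathbf{d}^{\perp}/\mathcal{L}$ is exactly the torsion subgroup; the same argument applies to $D(\mathcal{L})\subseteq\mathbf{1}^{\perp}$. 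With these identifications the claim becomes the index identity $[\mathbf{1}^{\perp}:D(\mathcal{L})]=\det(D)\,[\mathbf{d}^{\perp}:\mathcal{L}]$. Using the chain $D(\mathcal{L})\subseteq D(\mathbf{d}^{\perp})\subseteq\mathbf{1}^{\perp}$, multiplicativity of the index, and $[D(\mathbf{d}^{\perp}):D(\mathcal{L})]=[\mathbf{d}^{\perp}:\mathcal{L}]$ (since $D$ is injective and restricts to an isomorphism $\mathbf{d}^{\perp}/\mathcal{L}\cong D(\mathbf{d}^{\perp})/D(\mathcal{L})$), this reduces further to the single identity $[\mathbf{1}^{\perp}:D(\mathbf{d}^{\perp})]=\det(D)$.

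The key step, where the hypothesis $\gcd(d_1,\ldots,d_s)=1$ enters, is this last identity, which I would extract from the snake lemma applied to the commutative diagram with exact rows
\[
\begin{array}{ccccccccc}
0 & \to & \mathbf{d}^{\perp} & \to & \mathbb{Z}^s & \xrightarrow{\langle\mathbf{d},-\rangle} & \mathbb{Z} & \to & 0 \\
  &     & \big\downarrow{\scriptstyle D} &   & \big\downarrow{\scriptstyle D} &   & \big\downarrow{\scriptstyle \mathrm{id}} &  & \\
0 & \to & \mathbf{1}^{\perp} & \to & \mathbb{Z}^s & \xrightarrow{\langle\mathbf{1},-\rangle} & \mathbb{Z} & \to & 0.
\end{array}
\]
The top row is exact precisely because $\gcd(d_1,\ldots,d_s)=1$ forces $\langle\mathbf{d},-\rangle$ to be surjective, and the right-hand square commutes since $\langle\mathbf{1},D(a)\rangle=\langle\mathbf{d},a\rangle$. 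As all three vertical maps are injective, the snake lemma produces an isomorphism $\mathbf{1}^{\perp}/D(\mathbf{d}^{\perp})\cong\mathrm{coker}(D)=\mathbb{Z}^s/D(\mathbb{Z}^s)$, of order $\det(D)=d_1\cdots d_s$. Combining the displayed identities then yields $|T(\mathbb{Z}^s/D(\mathcal{L}))|=\det(D)\,|T(\mathbb{Z}^s/\mathcal{L})|=|\mathbb{Z}^s/D(\mathbb{Z}^s)|\,|T(\mathbb{Z}^s/\mathcal{L})|$.

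The main point to watch is exactly the role of coprimality, which is the only place the hypothesis is used: without it $\langle\mathbf{d},-\rangle$ has image $\gcd(\mathbf{d})\mathbb{Z}$, the right vertical map becomes the inclusion $\gcd(\mathbf{d})\mathbb{Z}\hookrightarrow\mathbb{Z}$, and the snake lemma then inserts an extra factor $\mathbb{Z}/\gcd(\mathbf{d})$, replacing the constant by $\det(D)/\gcd(d_1,\ldots,d_s)$. Thus the entire force of the assumption $\gcd(d_1,\ldots,d_s)=1$ sits in the exactness of the top row, and I expect verifying that (and the two torsion identifications) to be the only nontrivial part; everything else is bookkeeping with indices.
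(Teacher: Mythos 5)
Your proof is correct, and it takes a genuinely different route from the paper's. The paper proves the first equality by directly exhibiting a short exact sequence
$0\to T(\mathbb{Z}^s/\mathcal{L})\xrightarrow{\sigma}T(\mathbb{Z}^s/D(\mathcal{L}))\xrightarrow{\rho}\mathbb{Z}^s/D(\mathbb{Z}^s)\to 0$, where $\sigma$ is induced by $D$ and $\rho$ by the identity; the only delicate point there is the surjectivity of $\rho$, which is verified by hand using a B\'ezout relation $1=\sum_i\lambda_i d_i$ together with a preliminary lemma asserting that each $e_i-e_j$ is torsion modulo $D(\mathcal{L})$ (this is where the rank-$(s-1)$ hypothesis is used in the paper). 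You instead identify the two torsion subgroups with $\mathbf{d}^{\perp}/\mathcal{L}$ and $\mathbf{1}^{\perp}/D(\mathcal{L})$ via saturation, reduce everything to the index identity $[\mathbf{1}^{\perp}:D(\mathbf{d}^{\perp})]=\det(D)$, and obtain that from the snake lemma; coprimality enters only through the surjectivity of $\langle\mathbf{d},-\rangle$, which is the same B\'ezout input in disguise. Your route dispenses with the auxiliary torsion lemma and the explicit element chase, isolates cleanly where the hypothesis is used, and (as you note) immediately yields the corrected constant $\det(D)/\gcd(d_1,\ldots,d_s)$ when the $d_i$ are not coprime; the paper's argument is more elementary and self-contained but less transparent about the mechanism. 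All the individual steps you invoke (saturatedness of $\mathbf{d}^{\perp}$ and $\mathbf{1}^{\perp}$, finiteness of the relevant quotients from the rank-$(s-1)$ hypothesis, multiplicativity of the index along $D(\mathcal{L})\subseteq D(\mathbf{d}^{\perp})\subseteq\mathbf{1}^{\perp}$, and the commutativity of the two squares) check out.
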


\begin{proof} The second equality follows 
from the equality $|\mathbb{Z}^s/D(\mathbb{Z}^s)|=\det(D)$. Next, we
show the first equality. Consider the following sequence of $\mathbb{Z}$-modules:
$$
0\rightarrow
T(\mathbb{Z}^s/\mathcal{L})\stackrel{\sigma}{\rightarrow}T(\mathbb{Z}^s/D(\mathcal{L}))
\stackrel{\rho}{\rightarrow}
\mathbb{Z}^s/D(\mathbb{Z}^s)\rightarrow 0,
$$
where $a+\mathcal{L}\stackrel{\sigma}{\mapsto}
D(a)+D(\mathcal{L})$ and $a+D(\mathcal{L})\stackrel{\rho}{\mapsto}
a+D(\mathbb{Z}^s)$. It suffices to show that this sequence is exact.
It is not hard to see that $\sigma$ is injective and that ${\rm
im}(\sigma)={\rm ker}(\rho)$.  Next, we show that
$\rho$ is onto. We need only show that $e_k+D(\mathbb{Z}^s)$ is
in the image of $\rho$ for $k=1,\ldots,s$. For simplicity of notation
we assume that $k=1$. There are integers $\lambda_1,\ldots,\lambda_s$
such that $1=\sum_i\lambda_id_i$. Then, using that $D(\mathbb{Z}^s)$
is generated by $d_1e_1,\ldots,d_se_s$, we obtain
\begin{eqnarray*}
e_1+D(\mathbb{Z}^s)&=&\lambda_1d_1e_1+\lambda_2d_2e_1+
\cdots+\lambda_sd_se_1+D(\mathbb{Z}^s)\\
&=&\lambda_1d_1e_1+\lambda_2d_2(e_1-e_2)+\cdots+
\lambda_sd_s(e_1-e_s)+D(\mathbb{Z}^s)\\
&=&\lambda_2d_2(e_1-e_2)+\cdots+\lambda_sd_s(e_1-e_s)+D(\mathbb{Z}^s).
\end{eqnarray*}
Hence, by Lemma~\ref{sep1-12-1}, the element
$\lambda_2d_2(e_1-e_2)+\cdots+\lambda_sd_s(e_1-e_s)+D(\mathcal{L})$ is a torsion
element of $\mathbb{Z}^s/D(\mathcal{L})$ that maps to
$e_1+D(\mathbb{Z}^s)$ under the map $\rho$.
\end{proof}

\begin{theorem}\label{dec5-12} If $I=I(\mathcal{L})$ is a graded
lattice ideal of dimension $1$, 
then 
$$ 
\deg(\widetilde{S}/\widetilde{I})=\frac{d_1\cdots
d_s}{\max\{d_1,\ldots,d_s\}}\deg(S/I).
$$
\end{theorem}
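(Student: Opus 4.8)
The plan is to express both degrees in terms of the single number $L=\lim_{t\to 1}(1-t)F_I(t)$, which is finite and strictly positive because $\dim(S/I)=1$. First I record the standing hypothesis: since $f_i=t_i^{d_i}$, each $t_i$ is a root of $x^{d_i}-t_i^{d_i}$, so $K[t_1^{d_1},\ldots,t_s^{d_s}]\subset\widetilde S$ is an integral extension. Hence Lemma~\ref{jul8-12} gives $\dim(\widetilde S/\widetilde I)=\dim(S/I)=1$, and Corollary~\ref{ci-i(x)-p} shows that $\widetilde S/\widetilde I$ is Cohen--Macaulay. Throughout write $R=S/I$ and $m=\max\{d_1,\ldots,d_s\}$.

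The homogenization $\widetilde S/\widetilde I$ carries the standard grading, so its degree is recovered from the order-one pole of its Hilbert series, $\deg(\widetilde S/\widetilde I)=\lim_{t\to1}(1-t)F_{\widetilde I}(t)$. Feeding in Theorem~\ref{aug23-12}(a) in the form $F_{\widetilde I}(t)=\prod_{i=1}^s\frac{1-t^{d_i}}{1-t}\,F_I(t)$ and using $\lim_{t\to1}\frac{1-t^{d_i}}{1-t}=d_i$, I obtain
\[
\deg(\widetilde S/\widetilde I)=\Big(\lim_{t\to1}\prod_{i=1}^s\tfrac{1-t^{d_i}}{1-t}\Big)\cdot L=(d_1\cdots d_s)\,L .
\]

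The heart of the argument is the companion identity $\deg(R)=m\,L$, where $\deg(R)$ is read from the affine (total-degree) Hilbert function $H^a_I$ of Section~\ref{prelim}; recall that since $\dim R=1$ one has $\deg(R)=\lim_{d}H^a_I(d)/d$. I would prove this identity by two matching estimates. Because $I$ is a lattice ideal, each $t_j$ is a nonzerodivisor on $R$ (Theorem~\ref{jun12-02}) and is homogeneous of weighted degree $d_j$, so $F_{R/t_jR}(t)=(1-t^{d_j})F_I(t)$ is a polynomial and $\dim_K(R/t_jR)=\lim_{t\to1}(1-t^{d_j})F_I(t)=d_j\,L$. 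For the upper bound, any polynomial of total degree $\le d$ has weighted degree $\le md$, so the total-degree piece $R_{\le d}$ lies inside $\bigoplus_{e\le md}R_e$, the sum of the weighted graded pieces; hence $H^a_I(d)\le\sum_{e\le md}\dim_K R_e=mL\,d+o(d)$, giving $\deg(R)\le mL$. For the lower bound, pick $j_0$ with $d_{j_0}=m$: multiplication by $t_{j_0}$ injects $R_{\le d-1}$ into $R_{\le d}$, and the resulting surjection $R_{\le d}/t_{j_0}R_{\le d-1}\twoheadrightarrow (R/t_{j_0}R)_{\le d}$ yields $H^a_I(d)-H^a_I(d-1)\ge\dim_K(R/t_{j_0}R)=mL$ for $d\gg0$, so $\deg(R)\ge mL$. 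The two estimates force $\deg(R)=mL$.

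Combining the two computations gives $\deg(\widetilde S/\widetilde I)=(d_1\cdots d_s)L=\frac{d_1\cdots d_s}{m}\cdot mL=\frac{d_1\cdots d_s}{\max\{d_1,\ldots,d_s\}}\deg(S/I)$, as claimed. The step I expect to demand the most care is the identity $\deg(R)=mL$: the estimates are genuinely two-sided, and it is the \emph{maximal} weight that makes them meet. The lower estimate is attained exactly by a variable of top weight $d_{j_0}=m$, while the coarse passage from total degree to weighted degree in the upper estimate costs precisely the factor $m$; one must check that the intermediate $o(d)$ terms (coming from the eventual periodicity of $\dim_K R_e$ in dimension one) do not interfere, and that the filtration identity $H^a_I(d)-H^a_I(d-1)=\dim_K\!\big(R_{\le d}/t_{j_0}R_{\le d-1}\big)$ is used in the stable range $d\gg0$ where $(R/t_{j_0}R)_{\le d}=R/t_{j_0}R$. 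Since the whole argument is phrased through the variable $t_{j_0}$ rather than a generic hyperplane, it is insensitive to the cardinality of $K$.
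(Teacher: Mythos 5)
Your proof is correct, but it follows a genuinely different route from the paper's. The paper deduces the theorem from two external degree formulas for one-dimensional lattice ideals, namely $\deg(S/I)=\frac{\max_i\{d_i\}}{r}\,|T(\mathbb{Z}^s/\mathcal{L})|$ and $\deg(\widetilde{S}/\widetilde{I})=|T(\mathbb{Z}^s/D(\mathcal{L}))|$ taken from \cite{prim-dec-critical} and \cite{degree-lattice}; the entire content of the paper's argument is then the group-theoretic computation $|T(\mathbb{Z}^s/D(\mathcal{L}))|=\frac{d_1\cdots d_s}{r}|T(\mathbb{Z}^s/\mathcal{L})|$ carried out in Lemma~\ref{aug22-12-x} via the exact sequence $0\to T(\mathbb{Z}^s/\mathcal{L})\to T(\mathbb{Z}^s/D(\mathcal{L}))\to\mathbb{Z}^s/D(\mathbb{Z}^s)\to 0$. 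You instead bypass the lattice entirely and extract both degrees from the weighted Hilbert series: the identity $\deg(\widetilde{S}/\widetilde{I})=(d_1\cdots d_s)L$ is immediate from Theorem~\ref{aug23-12}(a), and the real work is your two-sided estimate proving $\deg(S/I)=\max_i\{d_i\}\cdot L$, which translates between the total-degree filtration defining the paper's affine Hilbert function and the weighted grading. Your version is self-contained (it uses the lattice hypothesis only through Theorem~\ref{jun12-02}, so it in fact applies to any one-dimensional graded ideal admitting a nonzerodivisor variable of maximal weight), whereas the paper's version is shorter but leans on unproved results from preprints; on the other hand, the torsion-subgroup formulas give sharper arithmetic information about the degree that your limit $L$ does not expose. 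The one step you should make fully explicit is the estimate $\sum_{e\le N}\dim_K R_e=LN+O(1)$ in your upper bound: it requires knowing that $F_I(t)$ has a pole of order at most one at every root of unity different from $1$, which follows from the nonnegativity of the coefficients together with the simple pole at $t=1$ (so that $\dim_K R_e$ is bounded and eventually periodic with mean $L$). You flag this yourself, and it is a standard fact, so it is a point of care rather than a gap.
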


\begin{proof} We set $r=\gcd(d_1,\ldots,d_s)$ and 
$D'={\rm diag}(d_1/r,\ldots,d_s/r)$. As $I$ and $\widetilde{I}$ are graded
lattice ideals of dimension $1$, according to some results of 
\cite{prim-dec-critical} and \cite{degree-lattice}, one has
$$
\deg({S}/{I})=\frac{\max\{d_1,\ldots,d_s\}}{r}|T(\mathbb{Z}^s/\mathcal{L})|\ 
\mbox{ and }\ 
\deg(\widetilde{S}/\widetilde{I})=
|T(\mathbb{Z}^s/\widetilde{\mathcal{L}})|,
$$
where $\widetilde{\mathcal{L}}=D(\mathcal{L})$. 
Hence, by Lemma~\ref{aug22-12-x}, we get 
\begin{eqnarray*}
\deg(\widetilde{S}/\widetilde{I})&=&|T(\mathbb{Z}^s/D(\mathcal{L}))|=
r^{s-1}|T(\mathbb{Z}^s/D'(\mathcal{L}))|\\
&=& r^{s-1}
|\mathbb{Z}^s/D'(\mathbb{Z}^s)|\, |T(\mathbb{Z}^s/\mathcal{L})|
=r^{s-1}\det(D')|T(\mathbb{Z}^s/\mathcal{L})|\\
&=&\frac{d_1\cdots d_s}{r}|T(\mathbb{Z}^s/\mathcal{L})|=\frac{d_1\cdots
d_s}{\max\{d_1,\ldots,d_s\}}\deg(S/I).
\end{eqnarray*}
The second equality can be shown using that the order of
$T(\mathbb{Z}^s/D(\mathcal{L}))$ is the $\gcd$ of all $s-1$ minors of
a presentation matrix of $\mathbb{Z}^s/D(\mathcal{L})$. 
\end{proof}

\begin{definition}\label{frobenius-number-def} If $\mathcal{S}$ is a
numerical semigroup of 
$\mathbb{N}$, the {\it Frobenius number\/} 
of $\mathcal{S}$, denoted by $g(\mathcal{S})$, is the largest 
integer not in $\mathcal{S}$. 
\end{definition}

The next result gives an explicit formula for the
regularity in terms of Frobenius numbers, that can be used to
compute the regularity using some available algorithms (see the
monograph \cite{frobenius-problem}). Using {\it Macaulay\/}$2$ 
\cite{mac2}, we can use this formula to compute the Frobenius number
of the corresponding semigroup.

\begin{theorem}\label{reg-deg-mcurves} If $I$ is the toric ideal of
$K[y_1^{d_1},\ldots,y_1^{d_s}]\subset K[y_1]$ and $r=\gcd(d_1,\ldots,d_s)$, then 

{\rm (a)} $\textstyle{\rm reg}(\widetilde{S}/\widetilde{I})=
\textstyle r\cdot g(\mathcal{S})+1+\sum_{i=1}^s(d_i-1)$, where
$\mathcal{S}=\mathbb{N}(d_1/r)+\cdots+\mathbb{N}(d_s/r)$.

{\rm (b)} $\deg(\widetilde{S}/\widetilde{I})=d_1\cdots d_s/r$. 
\end{theorem}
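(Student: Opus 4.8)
The plan is to compute every invariant on the numerical semigroup ring $S/I$ and then transport the answers to $\widetilde{S}/\widetilde{I}$ using the machinery already in place. First I would note that $I$ is the kernel of the graded $K$-algebra homomorphism $S\to K[y_1]$, $t_i\mapsto y_1^{d_i}$ (with $\deg(y_1)=1$), so that $S/I\cong K[y_1^{d_1},\dots,y_1^{d_s}]$ is the affine semigroup ring of the numerical semigroup $\mathbb{N}d_1+\cdots+\mathbb{N}d_s=r\mathcal{S}$. This is a one-dimensional graded domain, hence Cohen--Macaulay with ${\rm depth}(S/I)=\dim(S/I)=1$. Moreover $K[f_1,\dots,f_s]=K[t_1^{d_1},\dots,t_s^{d_s}]\subset\widetilde{S}$ is an integral extension, since each $t_i$ is a root of $X^{d_i}-t_i^{d_i}$; thus Theorem~\ref{aug23-12}, Theorem~\ref{jul10-12}, and Theorem~\ref{dec5-12} are all available.

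For part (a) the crux is the $a$-invariant $a(S/I)$. Since $\dim_K(S/I)_n=1$ for $n\in\mathbb{N}d_1+\cdots+\mathbb{N}d_s$ and $0$ otherwise, the (non-standard graded) Hilbert series is $F_I(t)=\sum_{n\in r\mathcal{S}}t^n=G(t^r)$, where $G(t)=\sum_{m\in\mathcal{S}}t^m$ is the Hilbert series of $K[\mathcal{S}]$. Writing $G(t)=Q(t)/(1-t)$ with $Q(t)=1-(1-t)\sum_{m\notin\mathcal{S}}t^m$, the gap sum has top degree $g(\mathcal{S})$, so $\deg Q=g(\mathcal{S})+1$ and $G$ has degree $g(\mathcal{S})$ as a rational function; substituting $t\mapsto t^r$ scales this degree by $r$, giving $a(S/I)=r\,g(\mathcal{S})$. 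Feeding this into Theorem~\ref{jul10-12} with ${\rm depth}(S/I)=1$ yields
\[
{\rm reg}(S/I)=a(S/I)+{\rm depth}(S/I)+\textstyle\sum_{i=1}^s(d_i-1)=r\,g(\mathcal{S})+1+\sum_{i=1}^s(d_i-1),
\]
and Theorem~\ref{aug23-12}(b) transfers this equality verbatim to ${\rm reg}(\widetilde{S}/\widetilde{I})$, proving (a).

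For part (b) I would invoke Theorem~\ref{dec5-12}, reducing the claim to the computation of $\deg(S/I)$. The defining lattice of $I$ is $\mathcal{L}=\{a\in\mathbb{Z}^s:\langle\mathbf{d},a\rangle=0\}=\ker(\mathbb{Z}^s\to\mathbb{Z},\ a\mapsto\sum_i a_id_i)$, which is saturated; hence $\mathbb{Z}^s/\mathcal{L}$ is torsion-free and $|T(\mathbb{Z}^s/\mathcal{L})|=1$. The degree formula recorded in the proof of Theorem~\ref{dec5-12} then gives $\deg(S/I)=\max\{d_1,\dots,d_s\}/r$, so that $\deg(\widetilde{S}/\widetilde{I})=\frac{d_1\cdots d_s}{\max_i d_i}\cdot\frac{\max_i d_i}{r}=\frac{d_1\cdots d_s}{r}$. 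As a cross-check one can bypass $S/I$ entirely: by Theorem~\ref{aug23-12}(a), $F_{\widetilde{I}}(t)=\big(\prod_i\lambda_i(t)\big)Q(t^r)/(1-t^r)$, and since $\widetilde{S}/\widetilde{I}$ is standard graded, one-dimensional and Cohen--Macaulay, its degree equals $\lim_{t\to1}(1-t)F_{\widetilde{I}}(t)=\frac{\prod_i\lambda_i(1)\,Q(1)}{r}=\frac{d_1\cdots d_s}{r}$, using $\lambda_i(1)=d_i$ and $Q(1)=1$.

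The main obstacle I anticipate is making the identification $a(S/I)=r\,g(\mathcal{S})$ fully rigorous: one must confirm that the numerator $Q$ of the reduced Hilbert series of $K[\mathcal{S}]$ has top degree exactly $g(\mathcal{S})+1$ (equivalently, that the $a$-invariant of the semigroup ring equals its Frobenius number), and that the substitution $t\mapsto t^r$ scales the degree of the rational function by $r$ rather than merely shifting it. One must also be careful that the degree of $S/I$ is taken in the sense of the standard degree filtration (as in Section~\ref{prelim}), not the non-standard Hilbert series used for the $a$-invariant; the two computations measure different things and it is precisely Theorems~\ref{jul10-12}, \ref{aug23-12}, and \ref{dec5-12} that reconcile them. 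Once the Cohen--Macaulayness of $S/I$ and the integrality of $K[F]\subset\widetilde{S}$ are recorded, the remaining steps are routine applications of these three results.
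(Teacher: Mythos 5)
Your proof is correct, and for part (a) it takes a genuinely more direct route than the paper's. The paper first replaces $I$ by the toric ideal $L$ of $K[y_1^{d_1/r},\ldots,y_1^{d_s/r}]$ so that the underlying semigroup is numerical, computes ${\rm reg}(S/L)$ from the cited fact that the numerator of the Hilbert series of a numerical semigroup ring has degree $g(\mathcal{S})+1$, and then recovers $\widetilde{I}$ as the homogenization of $\widetilde{L}$ with respect to $t_1^r,\ldots,t_s^r$, invoking Lemma~\ref{neves-lemma}(b) to account for the factor $r$. You instead work with the weights $d_i$ throughout: the weighted Hilbert series of $S/I$ is $G(t^r)$ with $G$ the Hilbert series of $K[\mathcal{S}]$, whose degree as a rational function is $g(\mathcal{S})$, so $a(S/I)=r\,g(\mathcal{S})$, and Theorem~\ref{jul10-12} together with Theorem~\ref{aug23-12}(b) finishes the argument. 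This bypasses Lemma~\ref{neves-lemma} and the intermediate ideals $L$, $\widetilde{L}$ entirely; the only extra burden, which you correctly identify and discharge, is the verification that $a(K[\mathcal{S}])=g(\mathcal{S})$ (precisely the content of the reference the paper cites) and that the substitution $t\mapsto t^r$ multiplies the degree of the rational function by $r$. For part (b) you follow essentially the paper's route --- the formula $\deg(S/I)=\max_i d_i/r$ followed by Theorem~\ref{dec5-12} --- except that you rederive the degree formula from the torsion-freeness of $\mathbb{Z}^s/\mathcal{L}$ and the identity displayed in the proof of Theorem~\ref{dec5-12}, where the paper simply cites \cite{prim-dec-critical}; your cross-check via $\lim_{t\to 1}(1-t)F_{\widetilde{I}}(t)$ is also sound. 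Both proofs ultimately rest on the same two pillars: the Frobenius-number description of the semigroup ring's Hilbert series and the $a$-invariant/regularity relation for Cohen--Macaulay graded rings.
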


\begin{proof} (a): We set $d_i'=d_i/r$ for $i=1,\ldots,s$. Let $L$ be
the toric ideal of 
$K[y_1^{d_1'},\ldots,y_1^{d_s'}]$ and let $\widetilde{L}$ be the
homogenization of $L$ with respect to $t_1^{d_1'},\ldots,t_s^{d_s'}$.
It is not hard to see that the toric ideals $I$ and $L$ are equal. 
Let $F_L(t)$ be the Hilbert series of $S/L$,
where $S$ has the grading induced by setting $\deg(t_i)=d_i'$
for all $i$. As $\gcd(d_1',\ldots,d_s')=1$, $\mathcal{S}$ is a
numerical semigroup, and by \cite[Remark~4.5, p.~200]{stcib} we can write
$F_L(t)=f(t)/(1-t)$, where $f(t)$ is a polynomial in $\mathbb{Z}[t]$ of degree
$g(\mathcal{S})+1$. Then, by Theorem~\ref{jul10-12}, we get
$$
\textstyle{\rm reg}(S/L)=\deg(F_L(t))-{\rm
ht}(L)+\sum_{i=1}^sd_i/r=g(\mathcal{S})-(s-1)+\sum_{i=1}^sd_i/r.
$$
Notice that $\widetilde{I}$ and $\widetilde{L}$ are Cohen-Macaulay
lattice ideals of height $s-1$. Since $\widetilde{I}$ is the
homogenization of $\widetilde{L}$ with 
respect to $t_1^r,\ldots,t_s^r$, by Lemma~\ref{neves-lemma}(b) and
Theorem~\ref{aug23-12}, we get
\begin{eqnarray*}
\textstyle{\rm reg}(\widetilde{S}/\widetilde{I})&=&
\textstyle (s-1)(r-1)+r\cdot{\rm reg}(\widetilde{S}/\widetilde{L})=
(s-1)(r-1)+r\cdot{\rm reg}({S}/{L})\\
& =& (s-1)(r-1)+r\left(\textstyle
g(\mathcal{S})-(s-1)+\sum_{i=1}^sd_i/r\right)\\
&=& \textstyle r\cdot g(\mathcal{S})+1+\sum_{i=1}^s(d_i-1).
\end{eqnarray*}

(b): By a result of \cite{prim-dec-critical},
$\deg(S/I)=\max\{d_1,\ldots,d_s\}/r$. Hence, the formula follows 
from Theorem~\ref{dec5-12}.
\end{proof}

\begin{definition} The {\it graded reverse lexicographical order\/}
(GRevLex for short) 
is defined as $t^b\succ t^a$ if and only if $\deg(t^b)>\deg(t^a)$ or
$\deg(t^b)=\deg(t^a)$ and the last  nonzero entry of $b-a$ is
negative. The {\it reverse lexicographical order\/} (RevLex for short)
is defined as $t^b\succ t^a$ if and only if 
the last  nonzero entry of $b-a$ is
negative.  
\end{definition}

\begin{corollary}\label{nov30-12} 
Let $\succ$ be the RevLex order. 
If $I$ is a graded lattice ideal and $\dim(S/I)=1$, then   
$$
{\rm reg}(S/I)={\rm reg}(\widetilde{S}/\widetilde{I})=
{\rm reg}(\widetilde{S}/{\rm in}(\widetilde{I}))
={\rm reg}(S/{\rm in}(I)),
$$
where ${\rm in}({I})$, ${\rm in}(\widetilde{I})$ are the initial
ideals of $I$, $\widetilde{I}$, with respect to $\succ$, respectively. 
\end{corollary}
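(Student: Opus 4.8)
The plan is to establish the chain of four equalities by combining the invariance of regularity under the homogenization map $I\mapsto\widetilde{I}$ with the invariance under passing to the initial ideal with respect to the RevLex order. The first equality, $\mathrm{reg}(S/I)=\mathrm{reg}(\widetilde{S}/\widetilde{I})$, is immediate from Theorem~\ref{aug23-12}(b), since with $f_i=t_i^{d_i}$ the extension $K[t_1^{d_1},\ldots,t_s^{d_s}]\subset\widetilde{S}$ is integral. The fourth equality, $\mathrm{reg}(\widetilde{S}/\mathrm{in}(\widetilde{I}))=\mathrm{reg}(S/\mathrm{in}(I))$, should follow by applying Theorem~\ref{aug23-12}(b) a second time: the crucial point is that for the RevLex order, $\mathrm{in}(\widetilde{I})$ is the homogenization of $\mathrm{in}(I)$ with respect to $t_1^{d_1},\ldots,t_s^{d_s}$. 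I would verify this by using Theorem~\ref{dec22-11} together with the fact that a RevLex Gröbner basis of $I(\mathcal{L})$ consists of binomials $t^{a_i}-t^{b_i}$ with disjoint support, whose leading terms $t^{a_i}$ map under the dictionary of Lemma~\ref{dec19-11} to the leading terms $t^{D(a_i)}$ of the corresponding binomials generating $\widetilde{I}$; one must check that RevLex leading terms are preserved under the weighting $a\mapsto D(a)$, which holds because scaling the $i$th exponent by the positive weight $d_i$ does not alter the sign of the last nonzero coordinate of $a_i-b_i$ in the relevant comparisons once homogeneity is taken into account.

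The central equality to establish is the middle one, $\mathrm{reg}(\widetilde{S}/\widetilde{I})=\mathrm{reg}(\widetilde{S}/\mathrm{in}(\widetilde{I}))$, and this is where the dimension hypothesis $\dim(S/I)=1$ does its work. In general regularity can only increase when passing to an initial ideal, so $\mathrm{reg}(\widetilde{S}/\widetilde{I})\le\mathrm{reg}(\widetilde{S}/\mathrm{in}(\widetilde{I}))$ holds automatically; the content is the reverse inequality. First I would observe that $\widetilde{S}/\widetilde{I}$ has dimension $1$ by Lemma~\ref{jul8-12}, and that $\widetilde{I}$ is a lattice ideal by the correspondence of Section~\ref{ci-section}, so by Theorem~\ref{jun12-02} every variable $t_i$ is a nonzerodivisor on $\widetilde{S}/\widetilde{I}$. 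The plan is to exploit the RevLex order specifically: for the RevLex order the image of the last variable $t_s$ is a nonzerodivisor modulo $\mathrm{in}(\widetilde{I})$ precisely because it is a nonzerodivisor modulo $\widetilde{I}$ — this is a standard feature of RevLex, that $t_s$ avoids the minimal primes of the initial ideal when it avoids them for the ideal itself. Consequently $\widetilde{S}/\mathrm{in}(\widetilde{I})$ is Cohen-Macaulay of dimension $1$ (a one-dimensional graded ring with a homogeneous nonzerodivisor of degree $1$ is Cohen-Macaulay), and hence so is $\widetilde{S}/\widetilde{I}$.

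Once both rings are known to be one-dimensional and Cohen-Macaulay, I would invoke the characterization recorded after Definition~\ref{definition:index-of-regularity}, namely that for a graded Cohen-Macaulay ideal of dimension $1$ the Castelnuovo-Mumford regularity equals the index of regularity $\mathrm{r}(\widetilde{S}/\cdot)$ (Theorem~\ref{jul10-12} and the surrounding discussion). The index of regularity depends only on the Hilbert function, and a Gröbner degeneration preserves the Hilbert function, so $\widetilde{S}/\widetilde{I}$ and $\widetilde{S}/\mathrm{in}(\widetilde{I})$ share the same Hilbert function and therefore the same index of regularity; combining this with the Cohen-Macaulayness of both gives $\mathrm{reg}(\widetilde{S}/\widetilde{I})=\mathrm{reg}(\widetilde{S}/\mathrm{in}(\widetilde{I}))$.

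The hard part will be the middle equality, and within it the assertion that $\widetilde{S}/\mathrm{in}(\widetilde{I})$ is Cohen-Macaulay. The dimension-$1$ hypothesis is essential here because it reduces Cohen-Macaulayness to the existence of a single linear nonzerodivisor, which RevLex delivers via the nonzerodivisor property of $t_s$; without it one would need the full depth sensitivity of RevLex degenerations, which can fail. I would need to be careful that the nonzerodivisor-of-$t_s$ argument is applied to $\mathrm{in}(\widetilde{I})$ in the standard grading on $\widetilde{S}$, where $t_s$ has degree $1$, and to confirm that the RevLex leading-term computation in the preceding paragraph correctly matches $\mathrm{in}(\widetilde{I})$ with the homogenization of $\mathrm{in}(I)$ so that the outer two equalities close up symmetrically.
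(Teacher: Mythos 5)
Your proposal is correct and follows essentially the same route as the paper: both obtain the outer equalities from Theorem~\ref{aug23-12} together with the identity ${\rm in}(\widetilde{I})=\widetilde{{\rm in}(I)}$, and both obtain the middle equality by noting that $t_s$ is a regular element on $\widetilde{S}/\widetilde{I}$ and on $\widetilde{S}/{\rm in}(\widetilde{I})$, so that both rings are one-dimensional Cohen--Macaulay standard graded algebras with the same Hilbert function and hence the same index of regularity, which equals the Castelnuovo--Mumford regularity. You in fact supply more detail than the paper on the two points it leaves implicit (why ${\rm in}(\widetilde{I})=\widetilde{{\rm in}(I)}$, via the compatibility of RevLex with $a\mapsto D(a)$, and why $t_s$ remains a nonzerodivisor after passing to the RevLex initial ideal).
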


\begin{proof} The quotients rings $\widetilde{S}/\widetilde{I}$ and
$\widetilde{S}/{\rm in}(\widetilde{I})$ are 
Cohen-Macaulay standard algebras of dimension $1$ because $t_s$ is a
regular element of 
both rings. Hence, these two rings have the same Hilbert function and
the same index of regularity. Therefore ${\rm
reg}(\widetilde{S}/\widetilde{I})$ is equal to 
${\rm reg}(\widetilde{S}/{\rm in}(\widetilde{I}))$. 
As ${\rm in}(\widetilde{I})=\widetilde{{\rm in}(I)}$,  
by Theorem~\ref{aug23-12}, we get 
$$
{\rm reg}(S/I)={\rm reg}(\widetilde{S}/\widetilde{I})={\rm
reg}(\widetilde{S}/{\rm in}(\widetilde{I}))=
{\rm
reg}(\widetilde{S}/\widetilde{{\rm in}(I)})=
{\rm reg}(S/{\rm in}(I)),
$$
as required. 
\end{proof}

\begin{theorem}\label{vila-12-12-12} 
Let $V_1,\ldots,V_c$ be a partition of 
$V=\{t_1,\ldots,t_s\}$ and let $\ell$ be a positive integer. Suppose
that $K[V_k]$ and $K[V]$ are polynomial rings with the standard
grading for $k=1,\ldots,c$. If $I_k$
is a graded binomial ideal of $K[V_k]$ such that $t_i^\ell-t_j^\ell\in I_k$ for
$t_i,t_j\in V_k$ and $\mathcal{I}$ is the ideal of $K[V]$ generated by
all binomials $t_i^{\ell}-t_j^{\ell}$ with $1\leq i,j\leq s$, then 
\begin{eqnarray*}
(\mathrm{i})& & (I_1+\cdots+I_c+\mathcal{I}\colon h^\infty)=
\textstyle(I_1\colon h_1^\infty)+\cdots+
\textstyle(I_c\colon h_c^\infty)+\mathcal{I}, \mbox{ and }
\\
(\mathrm{ii})& &
{\rm reg}\, K[V]/(I_1+\cdots+I_c+\mathcal{I}\colon
h^\infty)=
 \sum_{k=1}^c{\rm reg}\,
K[V_k]/(I_k\colon h_k^\infty)+(c-1)(\ell-1),
\end{eqnarray*}
where $h=t_1\cdots t_s$ and $h_k=\prod_{t_i\in V_k}t_i$ for
$k=1,\ldots,c$.
\end{theorem}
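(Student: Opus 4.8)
\emph{Overview and part (i).} Write $L_k=(I_k\colon h_k^\infty)\subset K[V_k]$, $J=I_1+\cdots+I_c+\mathcal{I}$, and $J'=L_1+\cdots+L_c+\mathcal{I}$; part~(i) asks for $(J\colon h^\infty)=J'$, and part~(ii) will then be a Hilbert series computation for $K[V]/J'$. The plan for (i) is to prove the two inclusions separately, the real content being that $J'$ is already saturated with respect to $h$. The inclusion $\supseteq$ is direct: if $f\in L_k$ then $h_k^Nf\in I_k$ for some $N$, and multiplying by $(h/h_k)^N$ gives $h^Nf\in I_1+\cdots+I_c\subset J$, so $L_k\subset(J\colon h^\infty)$; since $\mathcal{I}\subset J$, we get $J'\subset(J\colon h^\infty)$. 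For the reverse inclusion it suffices to show $J'$ is saturated with respect to $h$, for then $J\subset J'$ forces $(J\colon h^\infty)\subset(J'\colon h^\infty)=J'$. As $h=t_1\cdots t_s$, saturating by $h$ is the same as saturating by each variable, so by Theorem~\ref{jun12-02} it is enough to prove that $J'$ is a lattice ideal, i.e. that every $t_i$ is a nonzerodivisor on $A:=K[V]/J'$. I would establish this as a byproduct of the analysis for part~(ii).

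\emph{Reduction to a tensor product.} First I would record that each $L_k$ is a lattice ideal: being the saturation of a binomial ideal by the product $h_k$ of the variables of $K[V_k]$, each $t_i$ is a nonzerodivisor on $A_k:=K[V_k]/L_k$ by construction, so Theorem~\ref{jun12-02} applies. Since $I_k$ contains $t_i^\ell-t_j^\ell$ for all $t_i,t_j\in V_k$, the associated lattice contains $\ell(e_i-e_j)$ and, being homogeneous for the standard grading, has rank exactly $|V_k|-1$; hence $A_k$ is a one-dimensional Cohen--Macaulay ring. As the blocks are pairwise disjoint, $B:=K[V]/(L_1+\cdots+L_c)$ is the tensor product $A_1\otimes_K\cdots\otimes_K A_c$, a Cohen--Macaulay ring of dimension $c$ with Hilbert series $\prod_k H_{A_k}(t)$. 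In $A_k$ all elements $t_i^\ell$ ($t_i\in V_k$) coincide with a single degree-$\ell$ element $\theta_k$, which is a nonzerodivisor because each $t_i$ is. Reducing the generators of $\mathcal{I}$ modulo $L_1+\cdots+L_c$, the within-block binomials $t_i^\ell-t_j^\ell$ vanish and the cross-block ones become the differences $\theta_k-\theta_{k'}$; thus $A=B/(\theta_1-\theta_2,\ldots,\theta_{c-1}-\theta_c)$.

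\emph{The regular sequence (the crux).} The decisive step is that $\theta_1-\theta_2,\ldots,\theta_{c-1}-\theta_c$ is a regular sequence on $B$. Since each $\theta_k$ is a nonzerodivisor on its factor $A_k$ and the $\theta_k$ lie in distinct tensor factors, the tuple $\theta_1,\ldots,\theta_c$ is regular on $B$: over the field $K$ every factor is flat, so each $\theta_k$ remains a nonzerodivisor after base change along the remaining factors. The tuples $\theta_1,\ldots,\theta_c$ and $\theta_1-\theta_2,\ldots,\theta_{c-1}-\theta_c,\theta_c$ differ by an invertible linear substitution of forms of the same degree $\ell$, so the latter is again regular on $B$; in particular its initial segment $\theta_1-\theta_2,\ldots,\theta_{c-1}-\theta_c$ is regular and $\theta_c$ is a nonzerodivisor modulo it. Hence $A$ is a one-dimensional Cohen--Macaulay ring in which the common image $\bar\theta$ of the $\theta_k$ is a nonzerodivisor. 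Because $t_i^\ell=\bar\theta$ in $A$, each $t_i$ is a nonzerodivisor on $A$ (from $t_ig=0$ one gets $t_i^\ell g=0$, so $g=0$); by Theorem~\ref{jun12-02} this shows $J'$ is a lattice ideal, which completes part~(i). I expect this paragraph to be the main obstacle: one must carefully justify that the $\theta_k$ stay nonzerodivisors across the tensor product and that recombining them into the differences preserves regularity.

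\emph{Part (ii).} Quotienting by a regular sequence of $c-1$ forms of degree $\ell$ multiplies the Hilbert series by $(1-t^\ell)^{c-1}$, so $H_A(t)=\big(\prod_{k=1}^c H_{A_k}(t)\big)(1-t^\ell)^{c-1}$. Writing $H_{A_k}(t)=f_k(t)/(1-t)$ with $f_k\in\mathbb{Z}[t]$ and $1-t^\ell=(1-t)\lambda(t)$, $\lambda(t)=1+t+\cdots+t^{\ell-1}$, this collapses to $H_A(t)=\big(\prod_k f_k(t)\big)\lambda(t)^{c-1}/(1-t)$. Since $A$ and all the $A_k$ are one-dimensional Cohen--Macaulay, Theorem~\ref{jul10-12} (standard grading) gives ${\rm reg}$ equal to the degree of the numerator of the Hilbert series in each case. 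Reading off degrees yields ${\rm reg}\,A=\sum_k\deg f_k+(c-1)(\ell-1)=\sum_{k=1}^c{\rm reg}\,K[V_k]/(I_k\colon h_k^\infty)+(c-1)(\ell-1)$, which is the asserted formula once part~(i) identifies $A$ with $K[V]/(J\colon h^\infty)$.
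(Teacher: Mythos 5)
Your proof is correct, and it rests on the same structural facts as the paper's: each saturation $(I_k\colon h_k^\infty)$ is a one-dimensional Cohen--Macaulay lattice ideal, the sum of these ideals over the disjoint variable sets presents a Cohen--Macaulay tensor product, the cross-block binomials $t_i^\ell-t_j^\ell$ cut it down by a regular sequence, and the regularity formula is then read off the Hilbert series using Theorem~\ref{jul10-12}. The packaging, however, is genuinely different. The paper inducts on $c$: it settles the case $c=2$ by showing $J_1+J_2$ is Cohen--Macaulay of dimension $2$ and that a single cross-block binomial $f=t_i^\ell-t_j^\ell$ is a nonzerodivisor (via an associated-primes/height argument), concludes that $J_1+J_2+\mathcal{I}$ is a lattice ideal using Lemma~\ref{apr24-12-1}, and then runs the inductive step by absorbing $V_1\cup\cdots\cup V_{c-1}$ into a single super-block. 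You instead treat all $c$ blocks simultaneously: $B=A_1\otimes_K\cdots\otimes_K A_c$, the $c-1$ differences $\theta_k-\theta_{k+1}$ form a regular sequence (obtained from $\theta_1,\ldots,\theta_c$ by an invertible change of homogeneous generators), and the lattice-ideal property of $J'$ follows from Theorem~\ref{jun12-02} via the clean observation that $t_i^\ell=\bar\theta$ forces each $t_i$ to be a nonzerodivisor. Your route avoids both the induction and the height computation, at the cost of having to supply two standard facts that the paper gets from its cited lemmas: that the saturation $(I_k\colon h_k^\infty)$ of a binomial ideal is again binomial (needed before invoking Theorem~\ref{jun12-02}), and that in a graded Cohen--Macaulay ring any $c$ homogeneous positive-degree generators of an ideal of height $c$ form a regular sequence (needed to pass from $\theta_1,\ldots,\theta_c$ to the consecutive differences). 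Both are standard, so the argument stands; I would only ask you to make those two citations explicit.
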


\begin{proof} The proofs of (i) and (ii) are by induction on $c$. If
$c=1$ the asserted equalities hold because in this case
$\mathcal{I}\subset I_1$. We set 
\begin{eqnarray*}
J&=&(I_1+\cdots+I_c+\mathcal{I}\colon h^\infty),\ \ J_k=(I_k\colon
h_k^\infty),
 \\ 
L&=&I_1+\cdots+I_{c-1}+\mathcal{I}',\mbox{ where } 
\mathcal{I}'=(\{t_i^\ell-t_j^\ell\vert\, t_i,t_j\in V'\})\mbox{ and }
V'=V_1\cup\cdots\cup
V_{c-1},
\end{eqnarray*}
and $g=\prod_{t_i\in V'}t_i$. First, we show the case $c=2$. (i): We set 
$J'=J_1+J_2+\mathcal{I}$. Clearly one
has the inclusion $J'\subset J$. To show the reverse
inclusion it suffices to show that $J'$ 
is a lattice ideal. Since this ideal is graded of dimension $1$ and
$V(J',t_i)=0$ for $i=1,\ldots,s$, we need only show that $J'$ 
is Cohen-Macaulay (see Lemma~\ref{apr24-12-1}). As $J_1, J_2$ are lattice
ideals of dimension $1$, they are Cohen-Macaulay. Hence, $J_1+J_2$ is
Cohen-Macaulay of dimension $2$ (see \cite[Lemma~4.1]{Vi2}). Pick
$t_i\in V_1$ and $t_j\in V_2$. The binomial $f=t_i^\ell-t_j^\ell$ is
regular modulo $J_1+J_2$. Indeed, if $f$ is in some associated prime
$\mathfrak{p}$ of $J_1+J_2$, then $\mathcal{I}\subset \mathfrak{p}$
and consequently the height of $\mathfrak{p}$ is at least $s-1$, a
contradiction because all associated primes of $J_1+J_2$ have height
$s-2$. Hence, since $J_1+J_2+\mathcal{I}$ is equal to $J_1+J_2+(f)$,
the ideal $J'$ is Cohen-Macaulay. (ii): There is an exact sequence 
$$
0\longrightarrow (K[V]/(J_1+J_2))[-\ell]\stackrel{f}{\longrightarrow}
K[V]/(J_1+J_2)\longrightarrow K[V]/J\longrightarrow 0.
$$
Hence, by the additivity of Hilbert series, 
we get
$$
H_J(t)=H_{(J_1+J_2)}(t)(1-t^\ell)=H_{J_1}(t)H_{J_2}(t)(1-t^\ell),
$$
where $H_J(t)$ is the Hilbert series of $K[V]/J$ 
(cf. arguments below). 
The ideals
$J,J_1,J_2$ are graded lattice ideals of dimension $1$, hence they are
Cohen-Macaulay. Therefore (cf. arguments below) we obtain 
$$
{\rm reg}\, K[V]/J={\rm reg}\, K[V_1]/J_1+{\rm reg}\, K[V_2]/J_2+
(\ell-1),
$$
which gives the formula for the regularity. 

Next, we show the general case. (i): Applying the 
case $c=2$, by induction, we get  
\begin{eqnarray*}
J&=&(L+I_c+\mathcal{I}\colon h^\infty)=(L\colon g^\infty)+(I_c\colon
h_c^\infty)+\mathcal{I}\\ 
&=&[(I_1\colon h_1^\infty)+\cdots+(I_{c-1}\colon
h_{c-1}^\infty)+\mathcal{I}']+ (I_c\colon
h_c^\infty)+\mathcal{I}.
\end{eqnarray*}
Thus, $J=(I_1\colon h_1^\infty)+\cdots+(I_c\colon
h_c^\infty)+\mathcal{I}$, as required. (ii): We set 
$Q_1=(L\colon g^\infty)$ and $Q=Q_1+J_c$. 
From the isomorphism
$$
K[V]/Q\simeq (K[V']/Q_1)\otimes_K(
K[V_c]/J_c),
$$
we get that $H_Q(t)=H_1(t)H_2(t)$, where $H_Q(t)$ is the
Hilbert series of $K[V]/Q$ and $H_1(t)$,
$H_2(t)$ are the Hilbert series of $K[V']/Q_1$ and $K[V_c]/J_c$, 
respectively. Since $Q_1$ and $J_c$ are graded lattice ideals of
dimension $1$, they are Cohen-Macaulay and their 
Hilbert series can be written as $H_i(t)=f_i(t)/(1-t)$, with 
$f_i(t)\in\mathbb{Z}[t]$ for $i=1,2$ and such that $\deg(f_1)={\rm
reg}\, K[V']/Q_1$ 
and $\deg(f_2)={\rm reg}\, K[V_c]/J_c$. Fix $t_i\in V'$ and $t_j\in V_c$. If
$f=t_i^\ell-t_j^\ell$, then by the case $c=2$ there is an exact
sequence 
$$
0\longrightarrow (K[V]/Q)[-\ell]\stackrel{f}{\longrightarrow}
K[V]/Q\longrightarrow K[V]/J\longrightarrow 0.
$$
Hence, by the additivity of Hilbert series, we get
$$
H_J(t)=H_Q(t)(1-t^\ell)=H_1(t)H_2(t)(1-t^\ell)=
\frac{f_1(t)f_2(t)(1+t+\cdots+t^{\ell-1})}{(1-t)}.
$$
Therefore, as $J$ is a graded lattice ideal of dimension $1$, by induction we
get 
\begin{eqnarray*} 
{\rm reg}\, K[V]/J&=&{\rm reg}\, K[V']/Q_1+{\rm reg}\, K[V_c]/J_c+ (\ell-1)\\ 
&=& \left[\textstyle  \sum_{k=1}^{c-1}{\rm reg}\, K[V_k]/(I_k\colon
g_k^\infty)+(c-2)(\ell-1)\right]+{\rm reg}\, K[V_c]/J_c+
(\ell-1),
\end{eqnarray*}
which gives the formula for the regularity. 
\end{proof}

\section{Vanishing ideals}\label{applications-to-i(x)}

In this section we study graded vanishing ideals over arbitrary fields and give
some applications of the results of Section~\ref{lattice-dim1-section}. For
finite fields, we give formulae for the degree and regularity of
graded vanishing
ideals over degenerate tori. Given a
sequence of positive integers, we construct vanishing ideals, over
finite fields, with prescribed regularity and degree of a certain
type. We characterize when a graded lattice ideal of dimension $1$ is a
vanishing ideal in terms of the degree. We show that the vanishing 
ideal of $X$ is a lattice ideal of dimension $1$ if and only if $X$ is
a finite subgroup of a projective torus. For finite fields, it is
shown that $X$ is a  subgroup of a projective torus if and only if $X$
is parameterized by monomials. 

Let $K\neq\mathbb{F}_2$ be a field and let
$\mathbb{P}^{s-1}$ be a 
projective space of dimension $s-1$ over the field $K$. If $X$ is a subset of
$\mathbb{P}^{s-1}$, the {\it vanishing ideal\/} of
$X$,  denoted by
$I(X)$, is the ideal 
of $\widetilde{S}$ generated by all the homogeneous polynomials that vanish on $X$. 

\begin{definition} Given a sequence
$v=(v_1,\ldots,v_s)$ of  
positive integers, the set 
$$
\{[(x_1^{v_{1}},\ldots,x_s^{v_s})]\, \vert\, x_i\in K^*\mbox{ for all
}i\}\subset\mathbb{P}^{s-1}
$$
is called a {\it degenerate projective torus\/} of type $v$, 
where $K^*=K\setminus\{0\}$. If $v_i=1$ for all $i$, this set 
is called a {\it projective torus\/} in $\mathbb{P}^{s-1}$ and it is
denoted by $\mathbb{T}$.   
\end{definition}

The next result was shown in \cite{ci-mcurves} under the hypothesis that 
$I(X)$ is a complete intersection.  

\begin{corollary}\label{dec9-12} Let $K=\mathbb{F}_q$ be a finite field and let $X$
be a degenerate projective torus of type $v=({v_1},\ldots,v_s)$. If
$d_i=(q-1)/\gcd(v_i,q-1)$ for 
$i=1,\ldots,s$ and $r=\gcd(d_1,\ldots,d_s)$, then 
$$
\textstyle{\rm
reg}(\widetilde{S}/I(X))=r\cdot g(\mathcal{S})+1+\sum_{i=1}^s(d_i-1)\
\mbox{ and }\ \deg(\widetilde{S}/I(X))=d_1\cdots d_s/r,
$$
where $\mathcal{S}=\mathbb{N}(d_1/r)+\cdots+\mathbb{N}(d_s/r)$ is the
semigroup generated by 
$d_1/r,\ldots,d_s/r$. 
\end{corollary}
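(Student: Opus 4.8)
The plan is to realize the degenerate torus $X$ as a lattice ideal of dimension $1$ and then apply Theorem~\ref{reg-deg-mcurves} together with the correspondence $I\mapsto\widetilde{I}$ developed in the previous sections. First I would recall from \cite[Proposition~3.2]{ci-mcurves} (the foundational fact motivating this paper) that the vanishing ideal $I(X)\subset\widetilde{S}$ of a degenerate projective torus $X$ of type $v$ arises as $\widetilde{I}$, where $I\subset S$ is the toric ideal of a monomial curve. More precisely, with $d_i=(q-1)/\gcd(v_i,q-1)$, the point coordinates $x_i^{v_i}$ run over the cyclic group of $(q-1)/\gcd(v_i,q-1) = d_i$-th powers in $\mathbb{F}_q^*$, so that $X$ is (projectively) parameterized by monomials $y_1^{d_1},\ldots,y_1^{d_s}$. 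Hence $I(X)$ equals the homogenization $\widetilde{I}$ of the toric ideal $I$ of the monomial curve $K[y_1^{d_1},\ldots,y_1^{d_s}]\subset K[y_1]$ with respect to $t_1^{d_1},\ldots,t_s^{d_s}$.

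Once this identification $I(X)=\widetilde{I}$ is in place, the corollary is essentially immediate. For the regularity, I would apply Theorem~\ref{reg-deg-mcurves}(a) verbatim to the toric ideal $I$ of $K[y_1^{d_1},\ldots,y_1^{d_s}]$: since $r=\gcd(d_1,\ldots,d_s)$ and $\mathcal{S}=\mathbb{N}(d_1/r)+\cdots+\mathbb{N}(d_s/r)$, that theorem gives
\[
{\rm reg}(\widetilde{S}/\widetilde{I})=r\cdot g(\mathcal{S})+1+\sum_{i=1}^s(d_i-1),
\]
which is exactly the claimed regularity of $\widetilde{S}/I(X)$. For the degree, Theorem~\ref{reg-deg-mcurves}(b) yields $\deg(\widetilde{S}/\widetilde{I})=d_1\cdots d_s/r$ directly. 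Thus both formulae follow by substituting $I(X)=\widetilde{I}$.

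The one genuinely technical point is verifying the identification $I(X)=\widetilde{I}$ carefully, i.e., that the image of $X$ under the monomial parameterization is precisely the degenerate torus and that its vanishing ideal is the homogenization of the monomial-curve toric ideal. The crux is the arithmetic of the finite field: as $x_i$ ranges over $\mathbb{F}_q^*$ (a cyclic group of order $q-1$), the element $x_i^{v_i}$ ranges over the subgroup of order $(q-1)/\gcd(v_i,q-1)=d_i$, which coincides with the set of $d_i$-th powers—equivalently the image of the map $w\mapsto w^{d_i}$. This is the step where the hypothesis that $K$ is finite is used, and it is what lets us rewrite the parameterization by $v_i$-th powers as a parameterization by $d_i$-th powers of a free parameter, matching the monomial curve $y_1^{d_1},\ldots,y_1^{d_s}$. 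I expect this to be the main obstacle only in the bookkeeping sense; conceptually it is the standard translation, already established in \cite{ci-mcurves}, between degenerate tori over finite fields and monomial curves.

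With the identification secured, no further work is required: the corollary is the specialization of Theorem~\ref{reg-deg-mcurves} to the toric ideal attached to $X$, and the mention of the hypothesis in \cite{ci-mcurves} that $I(X)$ be a complete intersection is precisely what we have removed, since Theorem~\ref{reg-deg-mcurves} imposes no such restriction.
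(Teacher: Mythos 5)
Your proposal is correct and follows essentially the same route as the paper: identify $I(X)$ with the homogenization $\widetilde{I}$ of the toric ideal of $K[y_1^{d_1},\ldots,y_1^{d_s}]$ via the result in \cite{ci-mcurves} (the paper cites \cite[Lemma~3.1]{ci-mcurves} for exactly this), and then read off both formulae from Theorem~\ref{reg-deg-mcurves}. One small correction to your heuristic gloss of that identification: the image of $x_i\mapsto x_i^{v_i}$ in $\mathbb{F}_q^*$ is the unique subgroup of order $d_i$, which is the set of $d_i$-th \emph{roots of unity} (equivalently the image of $w\mapsto w^{(q-1)/d_i}$), not the set of $d_i$-th powers; this does not affect the proof since the identification is supplied by the cited lemma.
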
 

\begin{proof} Let $I$ be the toric ideal of
$K[y_1^{d_1},\ldots,y_1^{d_s}]$ and let $\widetilde{I}$ be the
homogenization of $I$ with respect to $t_1^{d_1},\ldots,t_s^{d_s}$. 
According to \cite[Lemma~3.1]{ci-mcurves}, $\widetilde{I}$ is equal to
$I(X)$. Hence, the result follows from Theorem~\ref{reg-deg-mcurves}.
\end{proof}

\begin{lemma}\label{renteria-proof} Given positive integers
$d_1,\ldots,d_s$, there is a prime number 
$p$ such that $d_i$ divides $p-1$ for all $i$.
\end{lemma}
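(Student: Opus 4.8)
The goal is to prove Lemma~\ref{renteria-proof}: given positive integers $d_1,\ldots,d_s$, there exists a prime $p$ with $d_i \mid p-1$ for all $i$. The plan is to reduce the simultaneous divisibility condition to a single divisibility and then invoke Dirichlet's theorem on primes in arithmetic progressions. Let $m = \mathrm{lcm}(d_1,\ldots,d_s)$. Since each $d_i$ divides $m$, it suffices to find a prime $p$ such that $m \mid p-1$, i.e. $p \equiv 1 \pmod{m}$, because $m \mid p-1$ forces $d_i \mid p-1$ for every $i$.

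First I would observe that the arithmetic progression $\{1 + km : k \geq 0\}$ has first term $1$ and common difference $m$, and that $\gcd(1,m)=1$. By Dirichlet's theorem on arithmetic progressions, any progression $\{a + km\}$ with $\gcd(a,m)=1$ contains infinitely many primes; applying this with $a=1$ produces a prime $p$ with $p \equiv 1 \pmod m$. For such a $p$ we have $m \mid p-1$, hence $d_i \mid m \mid p-1$ for each $i$, which is exactly the claim.

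The only conceptual ingredient is Dirichlet's theorem, so there is no genuine obstacle beyond citing it; the reduction through the least common multiple is elementary. One could streamline the argument even further by taking $p$ to be any prime in the progression $1 + k\,(d_1 d_2 \cdots d_s)$, since each $d_i$ divides the product $d_1\cdots d_s$ and hence divides $p-1$; this avoids even mentioning the lcm, at the cost of a slightly larger modulus. Either way the structure is the same: express the desired conclusion as a single congruence $p \equiv 1$ modulo a suitable integer coprime to $1$, and extract the prime from Dirichlet's theorem. Since the modulus and the residue $1$ are automatically coprime, the hypotheses of Dirichlet's theorem are satisfied with no further checking, and the proof concludes immediately.
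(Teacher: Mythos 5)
Your proof is correct and is essentially identical to the paper's: both set $m=\mathrm{lcm}(d_1,\ldots,d_s)$, note $\gcd(1,m)=1$, and apply Dirichlet's theorem to the progression $1+km$ to obtain a prime $p$ with $m\mid p-1$, hence $d_i\mid p-1$ for all $i$. No differences worth noting.
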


\begin{proof} We set $m={\rm lcm}(d_1,\ldots,d_s)$ and $a=1$. As $a$ and $m$ are
relatively prime positive integers, by a classical theorem of
Dirichlet \cite[p.~25, p~.61]{serre-book}, there exist infinitely many
primes $p$ such that $p\equiv a\ \mod (m)$. Thus, we can write $p-1=km$ for
some integer $k$.  This proves that $d_i$ divides $p-1$ for all $i$.
\end{proof}

The next result allows us to
construct vanishing ideals over finite fields with prescribed regularity and degree of a
certain type.

\begin{proposition}\label{dec30-12-3} Given a sequence $d_1,\ldots,d_s$ of positive
integers, there is a finite field $K=\mathbb{F}_q$ and a degenerate
projective torus $X$ such that
$$
\textstyle{\rm
reg}(\widetilde{S}/I(X))=r\cdot g(\mathcal{S})+1+\sum_{i=1}^s(d_i-1)\
\mbox{ and }\ \deg(\widetilde{S}/I(X))=d_1\cdots d_s/r,
$$
where $r=\gcd(d_1,\ldots,d_s)$ and 
$\mathcal{S}=\mathbb{N}(d_1/r)+\cdots+\mathbb{N}(d_s/r)$.  
\end{proposition}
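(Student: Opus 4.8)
The plan is to realize the prescribed sequence $d_1,\ldots,d_s$ as the invariants attached to a suitable degenerate torus and then quote Corollary~\ref{dec9-12}. The whole argument is a specialization of that corollary; the only external input needed is the existence of a convenient ground field, supplied by Lemma~\ref{renteria-proof}.

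First I would apply Lemma~\ref{renteria-proof} to obtain a prime number $p$ such that $d_i$ divides $p-1$ for every $i$, and set $K=\mathbb{F}_q$ with $q=p$. Then $d_i\mid q-1$ for all $i$, so the integers $v_i=(q-1)/d_i$ are well defined and positive. Let $X\subset\mathbb{P}^{s-1}$ be the degenerate projective torus of type $v=(v_1,\ldots,v_s)$.

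The key point is to check that the numbers associated to $X$ in Corollary~\ref{dec9-12} recover the prescribed $d_i$. Since $v_i=(q-1)/d_i$ is itself a divisor of $q-1$, one has $\gcd(v_i,q-1)=v_i$, and therefore
$$
\frac{q-1}{\gcd(v_i,q-1)}=\frac{q-1}{v_i}=d_i
$$
for all $i$. Hence the sequence $d_1,\ldots,d_s$, the integer $r=\gcd(d_1,\ldots,d_s)$, and the semigroup $\mathcal{S}=\mathbb{N}(d_1/r)+\cdots+\mathbb{N}(d_s/r)$ appearing in Corollary~\ref{dec9-12} are exactly the ones in the statement. Applying Corollary~\ref{dec9-12} to $X$ then gives simultaneously the formula for ${\rm reg}(\widetilde{S}/I(X))$ and the formula for $\deg(\widetilde{S}/I(X))$, completing the argument.

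There is no serious obstacle here: the content is entirely in the already-established Corollary~\ref{dec9-12} and Lemma~\ref{renteria-proof}. The one thing to get right is the choice $v_i=(q-1)/d_i$, which works precisely because a divisor $v_i$ of $q-1$ satisfies $\gcd(v_i,q-1)=v_i$; any other $v_i$ with $\gcd(v_i,q-1)=(q-1)/d_i$ would serve equally well.
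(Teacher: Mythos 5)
Your proposal is correct and follows exactly the paper's own argument: invoke Lemma~\ref{renteria-proof} to get a prime $q$ with $d_i\mid q-1$, set $v_i=(q-1)/d_i$, and apply Corollary~\ref{dec9-12} to the degenerate torus of type $v$. The only difference is that you spell out the (easy) verification that $\gcd(v_i,q-1)=v_i$, which the paper leaves implicit.
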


\begin{proof} 
By Lemma~\ref{renteria-proof}, there is a prime number $q$ such that 
$d_i$ divides $q-1$ for $i=1,\ldots,s$. We set $K=\mathbb{F}_q$ and
$v_i=(q-1)/d_i$ for all
$i$. If $X$ is a degenerate torus of type $v=(v_1,\ldots,v_s)$ then,
by Corollary~\ref{dec9-12}, the result follows.
\end{proof}

\begin{proposition}\label{dec30-12-2} Let $L$ be a graded lattice ideal of
$\widetilde{S}$ of dimension $1$ over an arbitrary field $K$ and let
$L=\mathfrak{q}_1\cap\cdots\cap\mathfrak{q}_m$ be a minimal primary 
decomposition of $L$. Then, $\deg(\widetilde{S}/L)\geq m$ with equality
if and only if $L=I(X)$ for some finite set $X$ of a 
projective torus $\mathbb{T}$ of $\mathbb{P}^{s-1}$.
\end{proposition}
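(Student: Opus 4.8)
The plan is to combine the additivity of the degree over the primary components with the non-zerodivisor property that characterizes lattice ideals. First I would record the structural consequence of $L$ being a lattice ideal: by Theorem~\ref{jun12-02}, each variable $t_i$ is a non-zerodivisor of $\widetilde{S}/L$, so no associated prime of $L$ contains any $t_i$; in particular the irrelevant maximal ideal $\mathfrak{m}=(t_1,\ldots,t_s)$ is not associated to $L$. Since the only homogeneous prime $\mathfrak{p}$ of $\widetilde{S}$ with $\dim(\widetilde{S}/\mathfrak{p})=0$ is $\mathfrak{m}$, and $\dim(\widetilde{S}/L)=1$, every associated prime of $L$ has dimension exactly $1$, i.e.\ height $s-1$. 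Thus $L$ is unmixed, the components $\mathfrak{q}_1,\ldots,\mathfrak{q}_m$ all have height $s-1={\rm ht}(L)$, and Proposition~\ref{additivity-of-the-degree} gives $\deg(\widetilde{S}/L)=\sum_{i=1}^m\deg(\widetilde{S}/\mathfrak{q}_i)$. As each summand is at least $1$, this yields $\deg(\widetilde{S}/L)\ge m$ and shows that equality holds if and only if $\deg(\widetilde{S}/\mathfrak{q}_i)=1$ for every $i$.

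Next I would analyze the equality case by showing that $\deg(\widetilde{S}/\mathfrak{q}_i)=1$ forces $\mathfrak{q}_i$ to be the vanishing ideal of a single $K$-rational point. Writing $\mathfrak{p}_i={\rm rad}(\mathfrak{q}_i)$, the surjection $\widetilde{S}/\mathfrak{q}_i\twoheadrightarrow\widetilde{S}/\mathfrak{p}_i$ between $1$-dimensional graded rings forces $1=\deg(\widetilde{S}/\mathfrak{q}_i)\ge\deg(\widetilde{S}/\mathfrak{p}_i)\ge 1$, so both degrees equal $1$ and the two Hilbert polynomials coincide; hence the kernel $\mathfrak{p}_i/\mathfrak{q}_i$ has dimension $0$. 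As this kernel is a submodule of the $\mathfrak{p}_i$-coprimary module $\widetilde{S}/\mathfrak{q}_i$, it would have $\mathfrak{p}_i$ (of dimension $1$) as an associated prime unless it is zero; therefore $\mathfrak{q}_i=\mathfrak{p}_i$ is prime. It then remains to argue that a homogeneous prime $\mathfrak{p}_i$ with $\dim(\widetilde{S}/\mathfrak{p}_i)=1$ and $\deg(\widetilde{S}/\mathfrak{p}_i)=1$ is the ideal $I([P_i])$ of a $K$-rational point $[P_i]\in\mathbb{P}^{s-1}$: the Hilbert function of $\widetilde{S}/\mathfrak{p}_i$ is eventually $1$, so multiplication by any nonzero linear form is an isomorphism in large degrees, and the zero-dimensional, degree-one reduced subscheme of $\mathbb{P}^{s-1}$ it defines is a single rational point, whose ideal is generated by the linear forms $a_jt_i-a_it_j$ attached to $[P_i]=[a_1:\cdots:a_s]$.

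With the points $[P_1],\ldots,[P_m]$ in hand, I would use the lattice property once more to locate them in the torus: for each $i$ and each $j$, the variable $t_j$ is a non-zerodivisor, hence $t_j\notin\mathfrak{p}_i=I([P_i])$, which means the $j$-th coordinate of $P_i$ is nonzero; therefore $[P_i]\in\mathbb{T}$. Setting $X=\{[P_1],\ldots,[P_m]\}\subset\mathbb{T}$ and intersecting, $L=\mathfrak{p}_1\cap\cdots\cap\mathfrak{p}_m=I([P_1])\cap\cdots\cap I([P_m])=I(X)$, as desired. For the converse, assuming $L=I(X)$ with $X\subset\mathbb{T}$ finite, the points of $X$ are $K$-rational, so $I(X)=\bigcap_{[P]\in X}I([P])$ is a minimal primary decomposition into exactly $|X|$ prime components, and by \cite{geramita-cayley-bacharach} one has $\deg(\widetilde{S}/L)=|X|=m$, giving equality.

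The step I expect to be the main obstacle is the identification, over an arbitrary field $K$, of a homogeneous prime $\mathfrak{p}_i$ with $\dim(\widetilde{S}/\mathfrak{p}_i)=\deg(\widetilde{S}/\mathfrak{p}_i)=1$ as the ideal of a $K$-rational point. The difficulty is that when $K$ is not algebraically closed a zero-dimensional homogeneous prime can define a closed point $P_i$ whose residue field $\kappa(P_i)$ is a proper finite extension of $K$, and such a point has $\deg(\widetilde{S}/\mathfrak{p}_i)=[\kappa(P_i):K]>1$. Ruling this out is exactly what the degree-one hypothesis accomplishes, via the fact that the degree of a zero-dimensional reduced scheme equals the $K$-dimension of its ring of global functions; I would invoke this standard formula rather than reprove it, so that $\deg(\widetilde{S}/\mathfrak{p}_i)=1$ yields $\kappa(P_i)=K$ and hence $\mathfrak{p}_i=I([P_i])$ with $[P_i]\in\mathbb{P}^{s-1}$ having all coordinates nonzero.
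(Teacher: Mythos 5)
Your argument is correct and follows the same overall skeleton as the paper's proof: additivity of the degree over the primary components reduces the equality case to $\deg(\widetilde{S}/\mathfrak{q}_i)=1$ for every $i$, each such component is then identified with the vanishing ideal of a $K$-rational point of the torus $\mathbb{T}$, and the converse is handled exactly as in the paper via $\deg(\widetilde{S}/I(X))=|X|$ and the decomposition $I(X)=\cap_{[Q]\in X}I_{[Q]}$. The one step you implement differently is the analysis of a degree-one component. The paper works directly with the primary ideal $\mathfrak{q}_i$: it takes the reduced Gr\"obner basis for the graded reverse lexicographic order, uses $H_{\mathfrak{q}_i}(d)=1$ for $d\gg 0$ together with the fact that each $t_j$ is a nonzerodivisor modulo $\mathfrak{q}_i$ to conclude that the basis consists of the $s-1$ linear forms $t_i-\lambda_it_s$ with $\lambda_i\in K^*$; this yields primality, rationality, and torus membership in one stroke. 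You instead first prove $\mathfrak{q}_i=\mathfrak{p}_i$ by comparing Hilbert polynomials and noting that a nonzero submodule of a $\mathfrak{p}_i$-coprimary module would force the one-dimensional prime $\mathfrak{p}_i$ to be associated to a zero-dimensional module, and then appeal to the standard fact that a one-dimensional homogeneous prime of degree one over an arbitrary field is the ideal of a $K$-rational point. Both routes are sound; the Gr\"obner computation is more self-contained (and makes your separate primality step unnecessary, since the relation $t_it_s^{d-1}\equiv\lambda_it_s^d$ with $\lambda_i\neq 0$ is available already for the primary ideal), while your version has the merit of spelling out the unmixedness of $L$ needed to deduce $\deg(\widetilde{S}/L)\geq m$ from the additivity formula --- a point the paper leaves implicit --- and of isolating exactly where the degree-one hypothesis excludes points with larger residue field.
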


\begin{proof} The inequality $\deg(\widetilde{S}/L)\geq m$ follows at
once from Proposition~\ref{additivity-of-the-degree}. Assume that
$\deg(\widetilde{S}/L)=m$. Let $\mathfrak{q}=\mathfrak{q}_i$ be any primary
component of $L$. Then, $\deg(\widetilde{S}/\mathfrak{q})=1$.
Consider  the reduced Gr\"obner
basis $\mathcal{G}=\{g_1,\ldots,g_p\}$ of $\mathfrak{q}$ relative to the 
graded reverse lexicographical order
of $\widetilde{S}$. As usual, we denote the initial term of $g_i$ by
${\rm in}(g_i)$. As the degree and the Krull dimension of
$\widetilde{S}/\mathfrak{q}$  
are equal to $1$, $H_\mathfrak{q}(d)=1$ for $d\gg 0$, i.e.,
$\dim_K(\widetilde{S}/\mathfrak{q})_d=1$ for $d\gg 0$. Using that $t_i$
is not a zero divisor of $\widetilde{S}/\mathfrak{q}$ for
$i=1,\ldots,s$, we get that
$t_s$ does not divide ${\rm in}(g_i)$ for any $i$. Then, $t_s^d+\mathfrak{q}$
generates $(\widetilde{S}/\mathfrak{q})_d$ as a $K$-vector space for
$d\gg 0$. Hence, for $i=1,\ldots,s-1$, there is $\mu_i\in K^*$ such that
$t_it_s^{d-1}-\mu_i t_s^d\in\mathfrak{q}$. Thus, $t_it_s^{d-1}$ is in
the initial ideal ${\rm in}(\mathfrak{q})$ of $\mathfrak{q}$ 
which is generated by ${\rm in}(g_1),\ldots,{\rm
in}(g_p)$. In particular, $t_i\in{\rm in}(L)$ for $i=1,\ldots,s-1$ 
and $p=s-1$ because $\mathcal{G}$ is reduced. 
Therefore for $i=1,\ldots,s-1$, using that $\mathcal{G}$ is a reduced Gr\"obner 
basis, we may assume that $g_i=t_i-\lambda_it_s$ for some
$\lambda_i\in K^*$. If $Q=(\lambda_1,\ldots,\lambda_{s-1},1)$, it is
seen that $\mathfrak{q}$ is the vanishing ideal of $[Q]$. Let
$X=\{[Q_1],\ldots,[Q_m]\}$ be the set of points in the projective
torus $\mathbb{T}\subset\mathbb{P}^{s-1}$ such that
$\mathfrak{q}_i$ is the vanishing ideal of $[Q_i]$, then 
$$
I(X)=\cap_{i=1}^mI_{[Q_i]}=\mathfrak{q}_1\cap\cdots\cap\mathfrak{q}_m=L,
$$
where $I_{[Q_i]}$ is the vanishing ideal of $[Q_i]$.

To show the converse, assume that $L$ is the vanishing ideal of a
finite set of points $X$ in a projective torus
$\mathbb{T}$. Let $[Q]=[(\alpha_i)]$ be a point in $X$ and let $I_{[Q]}$ be the 
vanishing ideal of $[Q]$. It is not hard to see that the ideal $I_{[Q]}$ is given by
\begin{equation*}
I_{[Q]}=(\alpha_1t_2-\alpha_2t_1,\alpha_1t_3-\alpha_3t_1,\ldots,
\alpha_1t_s-\alpha_st_1).
\end{equation*}
The primary decomposition of $L=I(X)$ is 
$I(X)=\cap_{[Q]\in X}I_{[Q]}$ because $I_{[Q]}$ is
a prime ideal of $\widetilde{S}$ for any $[Q]\in X$. To complete the proof notice
that $\deg(\widetilde{S}/I_{[Q]})=1$ for any $[Q]\in X$ and
$\deg(\widetilde{S}/I(X))=|X|$. 
\end{proof}

A similar statement holds for non-graded lattice ideals of dimension $0$.

\begin{proposition} Let $L$ be a lattice ideal of
$S$ of dimension $0$ and let
$L=\mathfrak{q}_1\cap\cdots\cap\mathfrak{q}_m$ be a minimal primary 
decomposition of $L$. Then, $\deg(S/L)\geq m$ with equality
if and only if $L=I(X^*)$ for some finite set $X^*$ contained in 
an affine torus $\mathbb{T}^*$ of $K^s$.
\end{proposition}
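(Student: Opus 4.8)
The plan is to mirror the proof of Proposition~\ref{dec30-12-2}, replacing the role of $\dim(S/L)=1$ and projective points by $\dim(S/L)=0$ and affine points; in the zero-dimensional setting matters in fact simplify. Since $\dim(S/L)=0$, every primary component $\mathfrak{q}_i$ has height $s$, so all of them are top-dimensional and Proposition~\ref{additivity-of-the-degree} gives $\deg(S/L)=\sum_{i=1}^m\deg(S/\mathfrak{q}_i)$. As each $\deg(S/\mathfrak{q}_i)\geq 1$, this yields at once the inequality $\deg(S/L)\geq m$, with equality precisely when $\deg(S/\mathfrak{q}_i)=1$ for every $i$.

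For the forward implication I would assume equality and analyze a single primary component $\mathfrak{q}=\mathfrak{q}_i$. Because $\dim(S/\mathfrak{q})=0$, the degree of $S/\mathfrak{q}$ equals $\dim_K(S/\mathfrak{q})$, so the hypothesis forces $\dim_K(S/\mathfrak{q})=1$; hence $S/\mathfrak{q}\cong K$ as a $K$-algebra and $\mathfrak{q}$ is a maximal ideal. Writing $\alpha_j\in K$ for the image of $t_j$ in $S/\mathfrak{q}\cong K$, I obtain $\mathfrak{q}=(t_1-\alpha_1,\ldots,t_s-\alpha_s)$, the vanishing ideal of the point $Q_i=(\alpha_1,\ldots,\alpha_s)\in K^s$. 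Since $L$ is a lattice ideal, Theorem~\ref{jun12-02} tells me that each $t_j$ is a non-zero divisor of $S/L$ and therefore lies outside every associated prime of $L$; in particular $t_j\notin\mathfrak{q}$, which forces $\alpha_j\neq 0$, so $Q_i\in\mathbb{T}^*=(K^*)^s$. As the $\mathfrak{q}_i$ are distinct maximal (hence prime) ideals, the points $Q_i$ are distinct; collecting them into $X^*=\{Q_1,\ldots,Q_m\}$, each $\mathfrak{q}_i$ is the vanishing ideal of $Q_i$ and $L=\bigcap_{i=1}^m\mathfrak{q}_i=I(X^*)$.

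For the converse I would start from $L=I(X^*)$ with $X^*\subset\mathbb{T}^*$ finite. For each $Q=(\alpha_1,\ldots,\alpha_s)\in X^*$ the vanishing ideal $I_Q=(t_1-\alpha_1,\ldots,t_s-\alpha_s)$ is maximal with $\deg(S/I_Q)=1$, and distinct points yield distinct, pairwise comaximal, maximal ideals. Consequently $I(X^*)=\bigcap_{Q\in X^*}I_Q$ is already a minimal primary decomposition, so its number of components is $m=|X^*|$, and Proposition~\ref{additivity-of-the-degree} gives $\deg(S/L)=\sum_{Q\in X^*}\deg(S/I_Q)=|X^*|=m$.

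The only point requiring a little care is the identification, in the equality case, of each degree-one primary component with the vanishing ideal of a point of the affine torus; this is where the lattice hypothesis is essential, since it is exactly the non-zero-divisor property of the variables (Theorem~\ref{jun12-02}) that rules out components supported on a coordinate hyperplane and guarantees $\alpha_j\neq 0$. Everything else is a direct transcription of the dimension-one argument, so I expect no substantive obstacle.
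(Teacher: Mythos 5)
Your proof is correct. Note that the paper itself gives no argument for this affine statement: it is recorded immediately after Proposition~\ref{dec30-12-2} with only the remark that ``a similar statement holds,'' so the intended proof is precisely the kind of transcription you carry out. Your version is in fact cleaner than a literal transcription. In the projective, dimension-one case the paper must work to identify a degree-one primary component with the vanishing ideal of a torus point: it takes a reduced Gr\"obner basis of $\mathfrak{q}$ for the graded reverse lexicographic order, argues that $t_s^d$ spans $(\widetilde{S}/\mathfrak{q})_d$ for $d\gg 0$, and extracts linear generators $t_i-\lambda_i t_s$ from the initial ideal. In your zero-dimensional setting none of that is needed: $\deg(S/\mathfrak{q})=\dim_K(S/\mathfrak{q})=1$ immediately gives $S/\mathfrak{q}\cong K$, hence $\mathfrak{q}=(t_1-\alpha_1,\ldots,t_s-\alpha_s)$, and Theorem~\ref{jun12-02} (the variables are non-zerodivisors modulo a lattice ideal, hence avoid all associated primes) forces $\alpha_j\neq 0$, exactly as you say. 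The remaining steps --- additivity of the degree over the height-$s$ components via Proposition~\ref{additivity-of-the-degree}, and the converse via the irredundant intersection of the pairwise comaximal maximal ideals $I_Q$ --- are all sound. I see no gap.
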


Let $K$ be a field with $K\neq\mathbb{F}_2$ and 
let $y^{v_1},\ldots,y^{v_s}$ be a finite set of monomials.  
As usual if $v_i=(v_{i1},\ldots,v_{in})\in\mathbb{N}^n$, 
then we set 
$$
y^{v_i}=y_1^{v_{i1}}\cdots y_n^{v_{in}},\ \ \ \ i=1,\ldots,s,
$$
where $y_1,\ldots,y_n$ are the indeterminates of a ring of 
polynomials with coefficients in $K$. The {\it projective algebraic
toric set} parameterized by $y^{v_1},\ldots,y^{v_s}$ is the set:
$$
\{[(x_1^{v_{11}}\cdots x_n^{v_{1n}},\ldots,x_1^{v_{s1}}\cdots
x_n^{v_{sn}})]\, \vert\, x_i\in K^*\mbox{ for all
}i\}\subset\mathbb{P}^{s-1}.
$$
A set of this form is said to be {\it parameterized by monomials\/}. 

\begin{proposition}\label{dec30-12-1} Let $K\neq\mathbb{F}_2$ be an
arbitrary field and let 
$X\subset\mathbb{P}^{s-1}$. Then the following hold.
\begin{itemize}
\item[\rm(a)] $I(X)$ is a lattice ideal of dimension $1$ if and only
if $X$ is a finite subgroup of $\mathbb{T}$.
\item[\rm(b)] If $K$ is finite, then $X$ is a
subgroup of $\mathbb{T}$ if and only if $X$ is parameterized by
monomials. 
\end{itemize}
\end{proposition}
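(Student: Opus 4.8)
The plan is to handle (a) and (b) separately, and within (a) to pass through the explicit lattice attached to $X$. Throughout I identify the projective torus $\mathbb{T}$ with the abelian group $(K^*)^{s-1}$ via $[(x_1,\dots,x_s)]\mapsto(x_1/x_s,\dots,x_{s-1}/x_s)$, so that $[(x_i)]\cdot[(y_i)]=[(x_iy_i)]$, the identity is $[\mathbf{1}]=[(1,\dots,1)]$, and a homogeneous binomial $t^a-t^b$ with $\sum a_i=\sum b_i$ vanishes at a point $[P]\in\mathbb{T}$ exactly when $P^{a-b}=1$ (the exponent lying in $H=\{c\in\mathbb{Z}^s:\sum c_i=0\}$ makes $P^{c}$ well defined on projective classes).

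For the forward implication of (a), suppose $I(X)$ is a lattice ideal of dimension $1$. Then $I(X)$ is binomial and, by Theorem~\ref{jun12-02}, every $t_i$ is a nonzerodivisor modulo $I(X)$; hence no variable lies in any associated (minimal) prime, and since $I(X)$ is radical this forces every point of $X$ to have all coordinates nonzero, i.e. $X\subseteq\mathbb{T}$, with $X$ finite because $\dim(\widetilde S/I(X))=1$. Writing $I(X)=I(\mathcal{L})$ for a lattice $\mathcal{L}\subseteq H$, the $K$-points of $V(I(\mathcal{L}))$ lying in $\mathbb{T}$ are exactly $X$ (for a finite set of distinct points any extra point is separated by a high-degree form), so $X=\{[P]\in\mathbb{T}:P^{c}=1\text{ for all }c\in\mathcal{L}\}$; this description is visibly closed under products and inverses and contains $[\mathbf{1}]$, hence $X$ is a subgroup.

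The substantial direction is the converse of (a): a finite subgroup $X$ has binomial vanishing ideal. I would first note $\dim(\widetilde S/I(X))=1$ and, since $X\subseteq\mathbb{T}$, that each $t_i$ is a nonzerodivisor modulo $I(X)$; by Theorem~\ref{jun12-02} it then suffices to show $I(X)$ is generated by binomials. Set $\mathcal{L}=\{c\in H:P^{c}=1\text{ for all }[P]\in X\}$, so that $I(\mathcal{L})\subseteq I(X)$ and $I(\mathcal{L})$ is a graded lattice ideal. The crux is to prove $\deg(\widetilde S/I(\mathcal{L}))=|X|$. I would use $\deg(\widetilde S/I(\mathcal{L}))=|T(\mathbb{Z}^s/\mathcal{L})|$ (the degree formula for dimension-$1$ lattice ideals from the proof of Theorem~\ref{dec5-12}) and identify $T(\mathbb{Z}^s/\mathcal{L})$ with $H/\mathcal{L}$. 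The pairing $X\times(H/\mathcal{L})\to K^*$, $([P],\bar c)\mapsto P^{c}$, is well defined and nondegenerate on both sides; the key point making it perfect is that any finite set of roots of unity in $K$ generates a cyclic group, so with $e=\exp(X)$ the field $K$ contains a full set of $e$-th roots of unity, whence $|\mathrm{Hom}(X,K^*)|=|X|$, and every character of $X$ extends to the full $e$-torsion of $\mathbb{T}$, where every character is monomial, i.e. of the form $P\mapsto P^{c}$ with $c\in H$. This gives surjectivity of $H\to\mathrm{Hom}(X,K^*)$, hence $|H/\mathcal{L}|=|X|$. Finally, $\widetilde S/I(\mathcal{L})$ is Cohen-Macaulay of dimension $1$ (the rank of $\mathcal{L}$ being $s-1$), and $I(\mathcal{L})\subseteq I(X)$ with $\deg(\widetilde S/I(\mathcal{L}))=|X|=\deg(\widetilde S/I(X))$ forces $I(\mathcal{L})=I(X)$: the quotient $I(X)/I(\mathcal{L})$ has Hilbert polynomial $0$, hence finite length, but a depth-$1$ module has no nonzero finite-length submodule.

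For (b), with $K=\mathbb{F}_q$, the implication ``parameterized by monomials $\Rightarrow$ subgroup'' is immediate, since the set is the image of the group homomorphism $(K^*)^n\to\mathbb{T}$, $x\mapsto[(x^{v_1},\dots,x^{v_s})]$, and an image of a homomorphism is a subgroup. For the converse I would use that $K^*=\langle\beta\rangle$ is cyclic of order $q-1$: choosing group generators $[P_1],\dots,[P_n]$ of the finite subgroup $X$ and writing each coordinate $(P_j)_i=\beta^{v_{ij}}$ with $v_{ij}\in\mathbb{N}$, the monomials $y^{v_1},\dots,y^{v_s}$, where $v_i=(v_{i1},\dots,v_{in})$, parameterize precisely $\langle[P_1],\dots,[P_n]\rangle=X$, because letting $x_j=\beta^{a_j}$ range over $K^*$ reproduces all products $\prod_j[P_j]^{a_j}$. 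The main obstacle is the degree and character computation in (a); the remaining steps are formal group theory together with the Cohen-Macaulayness of dimension-$1$ lattice ideals recorded earlier.
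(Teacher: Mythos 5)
Your argument is correct, and for the substantive direction of (a) it takes a genuinely different route from the paper. The paper's proof of ``finite subgroup $\Rightarrow$ lattice ideal'' simply invokes \cite[Proposition~2.3(a)]{EisStu} (and its proof) to conclude that $I(X)$ is binomial, and then checks via the explicit primes $I_{[\alpha]}$ that each $t_i$ is a nonzerodivisor, so that Theorem~\ref{jun12-02} applies; you instead make this self-contained by producing the candidate lattice $\mathcal{L}=\{c\in H: P^c=1\ \forall [P]\in X\}$, proving $|H/\mathcal{L}|=|X|$ through the perfect pairing (using cyclicity of finite subgroups of $K^*$ to get enough roots of unity, injectivity of $\mu_e$ to extend characters, and monomiality of characters of $\mu_e^{s-1}$), and then forcing $I(\mathcal{L})=I(X)$ by comparing degrees of two one-dimensional quotients, one of which is Cohen--Macaulay. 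This costs you the degree formula $\deg(\widetilde S/I(\mathcal{L}))=|T(\mathbb{Z}^s/\mathcal{L})|$ and the Cohen--Macaulayness of one-dimensional graded lattice ideals --- both of which the paper already uses elsewhere (in the proofs of Theorem~\ref{dec5-12} and Theorem~\ref{vila-12-12-12}) --- but it buys a proof that does not lean on Eisenbud--Sturmfels as a black box and, as a bonus, exhibits the lattice of $I(X)$ explicitly as the annihilator of $X$ under the character pairing. Your forward direction of (a) is also phrased differently (identifying $X$ with the torus points of $V(I(\mathcal{L}))$, which is visibly a subgroup, rather than the paper's hands-on verification that the componentwise product of two points again satisfies the binomial generators), and your part (b) coincides with the paper's. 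The only stylistic caveat is that in the forward direction of (a) the finiteness of $X$ (from $\dim \widetilde S/I(X)=1$) should be stated \emph{before} using the decomposition $I(X)=\bigcap_{[\alpha]\in X}I_{[\alpha]}$ into minimal primes to deduce $X\subseteq\mathbb{T}$; you do record the finiteness, just in the wrong order within the sentence.
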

\begin{proof} (a): ($\Rightarrow$) The set $X$ is finite because
$\dim\, \widetilde{S}/I(X)=1$ (see \cite[Proposition~6, p.~441]{CLO}). Let
$[\alpha]=[(\alpha_i)]$ be a point of $X$ and let $I_{[\alpha]}$ be
its vanishing ideal. We may assume that
$\alpha_k=1$ for some $k$. Since the ideal 
\begin{equation*}\label{tag-vanishing-point}
I_{[\alpha]}=(t_1-\alpha_1t_k,\ldots,
t_{k-1}-\alpha_{k-1}t_k,t_{k+1}-\alpha_{k+1}t_k,\ldots,t_s-\alpha_st_k)\tag{$\ast$}
\end{equation*}
is a minimal prime of $I(X)$, $\alpha_i\neq 0$ for all $i$ because
$t_i$ is not a zero divisor of $\widetilde{S}/I(X)$. Thus, $[\alpha]\in\mathbb{T}$. This
proves that $X\subset\mathbb{T}$. Next, we show that $X$ is a subgroup
of $\mathbb{T}$. Let $g_1,\ldots,g_r$ be a generating set of $I(X)$
consisting of binomials and let
$[\alpha]=[(\alpha_i)],[\beta]=[(\beta_i)]$ be two elements
of $X$. We set $\gamma=\alpha\cdot\beta=(\alpha_i\beta_i)$. Since the
entries of $\gamma$ are all non-zero, we may assume 
that $\gamma_s=1$. Since $g_i(\alpha)=0$ and $g_i(\beta)=0$ for all $i$, we get
that $g_i(\gamma)=0$ for all $i$. Hence, $I(X)\subset I_{[\gamma]}$
and consequently $I_{[\gamma]}$ is a minimal prime of $I(X)$. Hence
there is $[\gamma']\in X$, with $\gamma_s'=1$ such that
$I_{[\gamma]}=I_{[\gamma']}$. It follows that $\gamma=\gamma'$. Thus,
$[\gamma]\in X$. By a similar argument it follows that
$[\alpha]^{-1}=[(\alpha_i^{-1})]$ is in $X$.  

(a): ($\Leftarrow$) The ideal $I(X)$ is
generated by binomials, this follows from
\cite[Proposition~2.3(a)]{EisStu} and its proof. Since $I(X)$ is equal
to $\cap_{[\alpha]\in X}I_{[\alpha]}$, using
Eq.~(\ref{tag-vanishing-point}), we get that $t_i$ is not a zero
divisor of $S/I(X)$ for all $i$. Hence, by Theorem~\ref{jun12-02}, $I(X)$ is a
lattice ideal. 

(b): ($\Rightarrow$) By the fundamental theorem of finitely generated
abelian groups, $X$ is a direct product of cyclic groups. Hence,
there are $[\alpha_1],\ldots,[\alpha_n]$ in $X$ such that
$$
X=\left\{\left.[\alpha_1]^{i_1}\cdots[\alpha_n]^{i_n}\, \right\vert\,
i_1,\ldots,i_n\in\mathbb{Z}\right\}.
$$
If $\beta$ is a generator of $(K^*,\cdot\, )$, we can write
$$
\alpha_1=(\beta^{v_{11}},\ldots,\beta^{v_{s1}}),\ldots,
\alpha_n=(\beta^{v_{1n}},\ldots,\beta^{v_{sn}})
$$ 
for some $v_{ij}$'s in $\mathbb{N}$. Then, $[\gamma]$ is in $X$ if and
only if we can write
$$
[\gamma]=[(
(\beta^{i_1})^{v_{11}}\cdots (\beta^{i_n})^{v_{1n}},\ldots,
(\beta^{i_1})^{v_{s1}}\cdots (\beta^{i_n})^{v_{sn}}
)]
$$
for some $i_1,\ldots,i_n\in\mathbb{Z}$. Therefore, $X$ is parameterized
by the monomials $y^{v_1},\ldots,y^{v_s}$, where $v_i=(v_{i1},\ldots,v_{is})$ for
$i=1,\ldots,s$. 

(b): ($\Leftarrow$) If $X\subset\mathbb{P}^{s-1}$ is a projective
algebraic toric set parameterized by 
$y^{v_1},\ldots,y^{v_s}$, then by the exponent laws it is not hard to show that $X$ is a
multiplicative group under componentwise multiplication. 
\end{proof}

The next structure theorem allows us---with the help of {\em
Macaulay\/}$2$ \cite{mac2}---to compute the vanishing ideal of 
an algebraic toric set parameterized by monomials over a 
finite field.

\begin{theorem}{\rm\cite[Theorem~2.1]{algcodes}}\label{ipn-ufpe-cinvestav}
Let $B=K[t_1,\ldots,t_s,y_1,\ldots,y_n,z]$ be a polynomial ring
over the finite field $K=\mathbb{F}_q$ and let $X$ be the 
algebraic toric set parameterized by $y^{v_1},\ldots,y^{v_s}$. Then 
$$
I(X)=(\{t_i-y^{v_i}z\}_{i=1}^s\cup\{y_i^{q-1}-1\}_{i=1}^n)\cap \widetilde{S}
$$
and $I(X)$ is a Cohen-Macaulay radical lattice ideal of 
dimension $1$.
\end{theorem}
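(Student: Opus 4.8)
The plan is to establish the displayed formula by realizing $I(X)$ as the kernel of an explicit ring homomorphism, and then to read off all the algebraic properties from that description. Write $B=K[t_1,\dots,t_s,y_1,\dots,y_n,z]$ and $J=(\{t_i-y^{v_i}z\}_{i=1}^s\cup\{y_i^{q-1}-1\}_{i=1}^n)$. Since a polynomial $f\in\widetilde S$ lies in $J$ exactly when its image in $B/J$ vanishes, one has $J\cap\widetilde S=\ker(\widetilde S\hookrightarrow B\twoheadrightarrow B/J)$, so the whole problem reduces to understanding $B/J$ together with this composite. First I would eliminate the variables $t_i$: the relations $t_i-y^{v_i}z$ let me solve $t_i=y^{v_i}z$, giving a $K$-algebra isomorphism $B/(\{t_i-y^{v_i}z\})\cong K[y_1,\dots,y_n,z]$ under which $t_i\mapsto y^{v_i}z$. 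Passing further to the quotient by the $y_i^{q-1}-1$ yields $B/J\cong R:=K[y,z]/(y_1^{q-1}-1,\dots,y_n^{q-1}-1)$, and the induced map $\psi\colon\widetilde S\to R$ sends $t_i\mapsto y^{v_i}z$.

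The key step is to split $R$ by the Chinese Remainder Theorem. Because $K=\mathbb F_q$, each $y_i^{q-1}-1=\prod_{\beta\in K^*}(y_i-\beta)$ factors into distinct linear forms, so $K[y_i]/(y_i^{q-1}-1)\cong\prod_{\beta\in K^*}K$; tensoring over $K$ and adjoining $z$ gives $R\cong\prod_{\alpha\in(K^*)^n}K[z]$, where the $\alpha$-component sends $y_i\mapsto\alpha_i$ and $z\mapsto z$. Thus $\psi=(\psi_\alpha)_\alpha$ with $\psi_\alpha\colon\widetilde S\to K[z]$, $t_i\mapsto\alpha^{v_i}z$, and $\ker\psi=\bigcap_{\alpha\in(K^*)^n}\ker\psi_\alpha$. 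For each $\alpha$ every $\alpha^{v_i}$ lies in $K^*$, so $\psi_\alpha$ is onto $K[z]$ and $\ker\psi_\alpha$ is prime of height $s-1$; since the point ideal $I_{[P_\alpha]}$ of $P_\alpha=(\alpha^{v_1},\dots,\alpha^{v_s})$ is a height $s-1$ prime contained in $\ker\psi_\alpha$ (its binomial generators $\alpha^{v_i}t_j-\alpha^{v_j}t_i$ map to $0$), the two coincide. As $\alpha$ runs over $(K^*)^n$ the points $[P_\alpha]$ run over all of $X$, whence $J\cap\widetilde S=\ker\psi=\bigcap_{[P]\in X}I_{[P]}=I(X)$, which is the asserted formula.

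Finally I would deduce the algebraic properties from this intersection description. Being a finite intersection of the primes $I_{[P]}$, the ideal $I(X)$ is radical, and it has dimension $1$ since each $\widetilde S/I_{[P]}\cong K[z]$ is one-dimensional. Every point of $X$ lies in the projective torus $\mathbb T$, so no coordinate vanishes on $X$ and consequently $t_i\notin I_{[P]}$ for all $i$ and all $[P]\in X$; as the $I_{[P]}$ are exactly the associated primes of the reduced ring $\widetilde S/I(X)$, each $t_i$ is a nonzerodivisor, giving ${\rm depth}\geq 1=\dim$ and hence Cohen--Macaulayness. That $I(X)$ is a lattice ideal follows directly from Proposition~\ref{dec30-12-1}: since $X$ is parameterized by monomials over the finite field $K$, part (b) shows $X$ is a subgroup of $\mathbb T$, and then part (a) shows $I(X)$ is a lattice ideal of dimension $1$ (re-proving the dimension claim and, via Theorem~\ref{jun12-02}, the nonzerodivisor property just used).

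The main subtlety, and the reason I route the argument through the kernel computation rather than through geometry, is that over a finite field neither the naive Nullstellensatz nor the elimination "closure theorem" directly delivers radicality or the exact ideal (the relevant set-theoretic statements hold only after base change to $\overline K$, whereas $I(X)$ is defined by the $K$-rational points). Computing the contraction $J\cap\widetilde S$ as $\ker\psi$ sidesteps this entirely. The crux is therefore the Chinese Remainder splitting of $R$ and the identification $\ker\psi_\alpha=I_{[P_\alpha]}$; once these are in hand the remaining verifications are routine bookkeeping.
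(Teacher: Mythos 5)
Your argument is correct, but note that the paper itself offers no proof of this statement: Theorem~\ref{ipn-ufpe-cinvestav} is quoted from \cite[Theorem~2.1]{algcodes}, and the only proof of a result of this shape appearing in the paper is that of the infinite-field analogue, Theorem~\ref{ipn-ufpe-cinvestav-1}. That proof (like the one in \cite{algcodes}) runs by double inclusion: for $I(X)\subseteq J\cap\widetilde S$ one expands a homogeneous $F$ vanishing on $X$ via the binomial theorem to leave a remainder $z^dG(y)$ with $G$ vanishing on $(K^*)^n$ (over $\mathbb F_q$ one must then argue $G\in(y_1^{q-1}-1,\dots,y_n^{q-1}-1)$, which is exactly where those generators are needed), while the reverse inclusion uses Buchberger's algorithm and elimination theory to see that $J\cap\widetilde S$ is a binomial ideal and then checks each such binomial is homogeneous and vanishes on $X$. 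Your route is genuinely different and, in the finite-field setting, cleaner: you realize $J\cap\widetilde S$ as $\ker(\widetilde S\to B/J)$, split $B/J\cong\prod_{\alpha\in(K^*)^n}K[z]$ by the Chinese Remainder Theorem (using that $y^{q-1}-1$ has distinct roots exactly on $K^*$), and identify each $\ker\psi_\alpha$ with the point ideal $I_{[P_\alpha]}$ by comparing two height $s-1$ primes --- for the containment it is even quicker to note $\psi_\alpha(f)=z^{\deg f}f(P_\alpha)$ for homogeneous $f$. This hands you the decomposition $I(X)=\bigcap_{[P]\in X}I_{[P]}$ outright, from which radicality, $\dim=1$, and Cohen--Macaulayness (each $t_i$ lies outside every associated prime, so $\operatorname{depth}\geq 1=\dim$) follow simultaneously, with the lattice-ideal property supplied by Proposition~\ref{dec30-12-1} (no circularity: its proof rests on \cite{EisStu} and Theorem~\ref{jun12-02}, not on the present theorem). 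What the elimination approach buys instead is uniformity --- it works verbatim over infinite fields, where no CRT splitting is available --- which is presumably why the paper adopts it for Theorem~\ref{ipn-ufpe-cinvestav-1}. The only loose end in your write-up is the case $q=2$, which Proposition~\ref{dec30-12-1} excludes by hypothesis; there $X$ is the single point $[(1,\dots,1)]$ and every claim is verified directly from your own kernel computation.
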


The following theorem takes care of the infinite field case.

\begin{theorem}\label{ipn-ufpe-cinvestav-1}
Let $B=K[t_1,\ldots,t_s,y_1,\ldots,y_n,z]$ be a polynomial ring
over an infinite field $K$. If $X$ is an algebraic toric set
parameterized by monomials $y^{v_1},\ldots,y^{v_s}$, 
then 
$$
I(X)=(\{t_i-y^{v_i}z\}_{i=1}^s)\cap \widetilde{S} 
$$
and $I(X)$ is the toric ideal of $K[x^{v_1}z,\ldots,x^{v_s}z]$.
\end{theorem}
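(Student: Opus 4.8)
The plan is to identify $I(X)$ with the toric ideal $P=\ker(\varphi)$, where $\varphi\colon\widetilde{S}\to K[y_1,\ldots,y_n,z]$ is the $K$-algebra homomorphism determined by $t_i\mapsto y^{v_i}z$ (the parameterizing monomials, written $x^{v_i}$ in the statement), and then to recover the elimination formula from a formal computation. First I would record that $P$ is a graded ideal: since $\varphi$ sends the degree-$d$ part of $\widetilde{S}$ into the $z$-degree-$d$ part of $K[y_1,\ldots,y_n,z]$ (because $\varphi(t^a)=y^{a_1v_1+\cdots+a_sv_s}z^{|a|}$), decomposing $f\in P$ into homogeneous components and comparing $z$-degrees forces each component to lie in $P$. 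Thus $P$ is graded; it is by definition the toric ideal of $K[y^{v_1}z,\ldots,y^{v_s}z]$, and it is prime since this subalgebra is a domain.

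Next I would prove $P=I(X)$. The inclusion $P\subseteq I(X)$ needs no hypothesis on $K$: given a homogeneous $f\in P$ and a point $[w]\in X$ with $w=(x^{v_1},\ldots,x^{v_s})$ for some $x\in(K^*)^n$, one has $\varphi(f)(x,1)=f(w)$, so $\varphi(f)=0$ forces $f(w)=0$, and homogeneity of $f$ then gives vanishing at the projective point $[w]$. For the reverse inclusion $I(X)\subseteq P$, take a homogeneous $f\in I(X)$ of degree $d$. Because each $\varphi(t_i)$ has $z$-degree $1$, one gets $\varphi(f)=z^{d}g(y)$ with $g(y)=\varphi(f)(y,1)=f(y^{v_1},\ldots,y^{v_s})$, and $f\in I(X)$ says exactly that $g$ vanishes on $(K^*)^n$. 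This is the one place the infiniteness of $K$ is used: since $K$ is infinite, $(K^*)^n$ is Zariski dense in $\mathbb{A}^n_K$, so $g\equiv 0$, whence $\varphi(f)=0$ and $f\in P$. As $I(X)$ and $P$ are both graded, checking homogeneous elements suffices, and we conclude $I(X)=P$.

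Finally I would deduce the elimination identity. Set $I'=(\{t_i-y^{v_i}z\}_{i=1}^s)\subset B$ and let $\Phi\colon B\to K[y_1,\ldots,y_n,z]$ be the $K$-algebra map fixing the $y_j$ and $z$ and sending $t_i\mapsto y^{v_i}z$; it restricts to $\varphi$ on $\widetilde{S}$ and kills every generator of $I'$. If $f\in I'\cap\widetilde{S}$ then $\varphi(f)=\Phi(f)=0$, so $f\in P$; conversely, working modulo $I'$ one may replace each $t_i$ by $y^{v_i}z$, so $f\equiv\varphi(f)\pmod{I'}$, and $f\in P$ gives $f\in I'$, hence $f\in I'\cap\widetilde{S}$. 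Therefore $I'\cap\widetilde{S}=P=I(X)$, which is the asserted formula, and the second assertion is the identification $I(X)=P$ already established.

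I expect the only genuine obstacle to be the density step in the second paragraph: over a finite field $(K^*)^n$ is not Zariski dense and $g$ need only vanish on it, so the conclusion $g\equiv 0$ can fail. This is precisely why the finite-field statement (Theorem~\ref{ipn-ufpe-cinvestav}) must carry the extra generators $y_i^{q-1}-1$; once density is available, every remaining step is purely formal.
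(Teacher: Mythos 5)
Your proof is correct, and it takes a genuinely different route from the paper's in its second half. The paper proves the equality $I(X)=I'\cap\widetilde{S}$ directly in both directions: for $I(X)\subset I'\cap\widetilde{S}$ it expands each monomial of $F$ by the binomial theorem to produce the explicit representation $F=\sum_i g_i(t_i-y^{v_i}z)+z^dG(y)$ and then kills $G$ by the density of $(K^*)^n$; for the reverse inclusion it invokes Buchberger's algorithm and elimination theory to see that $I'\cap\widetilde{S}$ is a binomial ideal and then checks that every binomial in it is homogeneous and vanishes on $X$; finally the identification of $I(X)$ with the toric ideal is quoted from \cite[Proposition~7.1.9]{monalg}. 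You instead take the toric ideal $P=\ker\varphi$ as the pivot: you prove $P=I(X)$ first --- your density step is exactly the paper's essential use of infiniteness, phrased through $\varphi(f)=z^d g(y)$ rather than through the binomial expansion --- and then obtain $I'\cap\widetilde{S}=P$ by the purely formal substitution argument modulo $I'$. This buys you two things: the toric-ideal identification comes for free instead of by citation, and the Gr\"obner-basis and elimination-theory machinery is avoided entirely. What the paper's route buys in exchange is the explicit decomposition $F=\sum_i g_i(t_i-y^{v_i}z)+z^dG$, which is the form of argument that carries over to the finite-field case (Theorem~\ref{ipn-ufpe-cinvestav}), where the extra generators $y_i^{q-1}-1$ absorb the failure of density; your closing remark correctly isolates that density of $(K^*)^n$ is the only non-formal ingredient.
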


\begin{proof} We set
$I'=(t_1-y^{v_1}z,\ldots,t_s-y^{v_s}z)\subset B$. First we show the
inclusion $I(X)\subset I'\cap \widetilde{S}$.  
Take a homogeneous polynomial $F=F(t_1,\ldots,t_s)$ of degree $d$
that vanishes on $X$. We can write
\begin{equation}\label{aug27-09}
F=\lambda_1 t^{m_1}+\cdots+\lambda_r t^{m_r}\ \ \ \ (\lambda_i\in
K^*;\, m_i\in \mathbb{N}^s),
\end{equation}
where $\deg(t^{m_i})=d$ for all $i$. Write
$m_i=(m_{i1},\ldots,m_{is})$ for $1\leq i\leq r$. Applying the
binomial theorem to expand the right hand side of the equality
$$
t_j^{m_{ij}}=\left[(t_j-y^{v_j}z)+y^{v_j}z\right]^{m_{ij}},\ \ \
1\leq i\leq r,\ 1\leq j\leq s,
$$
and then substituting all the $t_j^{m_{i j}}$ in
Eq.~(\ref{aug27-09}), we obtain that $F$ can be written as:
\begin{equation}\label{23-jul-10}
F=\sum_{i=1}^sg_i(t_i-y^{v_i}z)+z^dF(y^{v_1},\ldots,y^{v_s})
=\sum_{i=1}^sg_i(t_i-y^{v_i}z)+z^dG(y_1,\ldots,y_n)
\end{equation}
for some $g_1,\ldots,g_s$ in $B$. Thus to show that $F\in I'\cap
S$ we need only show that $G=0$. We claim that $G$ vanishes on
$(K^*)^n$. Take an arbitrary sequence $x_1,\ldots,x_n$ of elements
of $K^*$. Making $t_i=x^{v_i}$ for all $i$ in
Eq.~(\ref{23-jul-10}) and using that $F$ vanishes on $X$, we
obtain
\begin{equation}\label{23-jul-10-3}
0=F(x^{v_1},\ldots,x^{v_s})=\sum_{i=1}^sg_i'(x^{v_i}-y^{v_i}z)+
z^dG(y_1,\ldots,y_n).
\end{equation}
We can make $y_i=x_i$ for all $i$ and $z=1$ in
Eq.~(\ref{23-jul-10-3}) to get that $G$ vanishes on
$(x_1,\ldots,x_n)$. This completes the proof of the claim.
Therefore $G$ vanishes on $(K^*)^n$ and since the field $K$ is
infinite it follows that $G=0$.

Next we show the inclusion $I(X)\supset I'\cap \widetilde{S}$. Let
$\mathcal{G}$ be a Gr\"obner basis of $I'$ with respect to the
lexicographical order $y_1\succ\cdots\succ y_n\succ z\succ
t_1\succ\cdots\succ t_s$. By Buchberger's algorithm \cite[Theorem~2,
p.~89]{CLO} the set $\mathcal{G}$ consists of binomials and by
elimination theory \cite[Theorem~2, p.~114]{CLO} the set
$\mathcal{G}\cap \widetilde{S}$ is a Gr\"obner basis of $I'\cap \widetilde{S}$. Hence
$I'\cap \widetilde{S}$ is a binomial ideal. Thus to show the inclusion
$I(X)\supset I'\cap \widetilde{S}$ it suffices to show that any binomial in
$I'\cap \widetilde{S}$ is homogeneous and vanishes on $X$. Take a binomial
$f=t^a-t^b$ in $I'\cap \widetilde{S}$, where $a=(a_i)$ and $b=(b_i)$ are in
$\mathbb{N}^s$. Then we can write
\begin{equation}\label{sept1-09}
f=\sum_{i=1}^sg_i(t_i-y^{v_i}z)
\end{equation}
for some polynomials $g_1,\ldots,g_s$ in $B$. Making $y_i=1$ for
$i=1,\ldots,n$ and $t_i=y^{v_i}z$ for $i=1,\ldots,s$, we get
$$
z^{a_1}\cdots z^{a_s}-z^{b_1}\cdots z^{b_s}=0\ \Longrightarrow\
a_1+\cdots+a_s=b_1+\cdots+b_s.
$$
Hence $f$ is homogeneous. Take a point $[P]$ in $X$ with
$P=(x^{v_1},\ldots,x^{v_s})$. Making $t_i=x^{v_i}$ in
Eq.~(\ref{sept1-09}), we get
$$
f(x^{v_1},\ldots,x^{v_s})=\sum_{i=1}^sg_i'(x^{v_i}-y^{v_i}z).
$$
Hence making $y_i=x_i$ for all $i$ and $z=1$, we get that
$f(P)=0$. Thus $f$ vanishes on $X$. Thus, we have shown the
equality $I(X)=I'\cap \widetilde{S}$. 

By \cite[Proposition 7.1.9]{monalg}
$I(X)$ is the toric ideal of $K[x^{v_1}z,\ldots,x^{v_s}z]$.
 \end{proof}

\section{Vanishing ideals over graphs}\label{applications-to-i(x)-1}

In this section, we study graded vanishing ideals over bipartite
graphs. For a projective algebraic toric set parameterized by the
edges of a bipartite graph, we are able to express the regularity of
the vanishing ideal in terms of the corresponding regularities for
the blocks of the graph. For bipartite graphs, we introduce a method that can be
used to compute the regularity. 

Let $K=\mathbb{F}_q$ be a finite field with $q$ elements and let 
$G$ be a simple graph with vertex set $V_G=\{y_1,\ldots,y_n\}$
and edge set $E_G$. We refer to \cite{Boll} for the general theory of
graphs. 

\begin{definition}\label{charvec-def}
Let $e=\{y_i,y_j\}$ be an edge of
$G$. The {\it characteristic vector\/} of $e$ is the vector
$v=e_i+e_j$, where $e_i$ is the $i^\textup{th}$ unit vector in $\mathbb{R}^n$. 
\end{definition}

In what follows $\mathcal{A}=\{v_1,\ldots,v_s\}$ will denote the 
set of all characteristic vectors of the edges of the graph $G$. We
may identify the edges of $G$ with the variables $t_1,\ldots,t_s$ of a
polynomial ring $K[t_1,\ldots,t_s]$ and
refer to $t_1,\ldots,t_s$ as the edges of $G$. 

\begin{definition} If $X$ is 
the projective algebraic
toric set parameterized by $y^{v_1},\ldots,y^{v_s}$, we call $X$ the
{\it projective algebraic toric set parameterized by the edges} of
$G$.
\end{definition}

\begin{definition} A graph $G$ is called {\it bipartite\/} if its vertex 
set can be partitioned into two disjoint 
subsets ${V}_1$ and ${V}_2$ such 
that every edge of $G$ has one end in ${V}_1$ and one end in ${V}_2$. The pair 
$(V_1,V_2)$ is called a {\it bipartition\/} of $G$.  
\end{definition}

Let $G$ be a graph. A vertex $v$ (resp. an edge $e$) of $G$  is called a 
{\it cutvertex\/} (resp. {\it
bridge\/}) if the number of connected components
of $G\setminus\{v\}$ (resp. $G\setminus\{e\}$) is larger than that of
$G$. A 
maximal connected subgraph of $G$ without cutvertices 
is called a {\it block\/}. A graph $G$ 
is $2$-{\it connected\/} if $|V_G|>2$ and $G$ has no 
cutvertices. Thus a block of $G$ is either a maximal 
$2$-connected subgraph, a bridge or an isolated vertex. By their maximality, 
different blocks of $G$ intersect in at most one vertex, 
which is then a cutvertex of $G$. Therefore every 
edge of $G$ lies in a unique block, and $G$ is the 
union of its blocks  (see \cite[Chapter~III]{Boll} for details).  

We come to the main result of this section.

\begin{theorem}\label{maria-jorge-vila-12-12-12} 
Let $G$ be a bipartite graph without isolated vertices
and let $G_1,\ldots,G_c$ be the blocks of $G$. If $K$ is a finite
field with $q$ elements and $X$ $($resp $X_k)$
is the projective algebraic toric set parameterized by
the edges of $G$ $($resp. $G_k\mathrm{)}$, then
$$
{\rm reg}\, K[E_G]/I(X)=\sum_{k=1}^c{\rm reg}\,
K[E_{G_k}]/I(X_k)+(q-2)(c-1).
$$
\end{theorem}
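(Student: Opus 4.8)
The plan is to deduce this from the additivity result Theorem~\ref{vila-12-12-12}, applied with $\ell=q-1$. First I would record the combinatorial input: since every edge of $G$ lies in a unique block, the edge sets $E_{G_1},\ldots,E_{G_c}$ form a partition of $V=E_G=\{t_1,\ldots,t_s\}$, and this is the partition $V_1,\ldots,V_c$ to which Theorem~\ref{vila-12-12-12} is applied, with $K[V_k]=K[E_{G_k}]$ and $K[V]=K[E_G]$ in the standard grading. For each block I would take $I_k=I(X_k)\subset K[E_{G_k}]$. By Theorem~\ref{ipn-ufpe-cinvestav} each $I(X_k)$ is a graded lattice ideal of dimension $1$; in particular it is a graded binomial ideal, and since every coordinate of a point of $X_k$ is a $(q-1)$-st root of unity, the binomials $t_i^{q-1}-t_j^{q-1}$ vanish on $X_k$, so $t_i^{\ell}-t_j^{\ell}\in I_k$ for all $t_i,t_j\in E_{G_k}$. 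This verifies the hypotheses of Theorem~\ref{vila-12-12-12}. Moreover, being a lattice ideal, $I_k$ has every variable as a nonzerodivisor (Theorem~\ref{jun12-02}), so $h_k=\prod_{t_i\in E_{G_k}}t_i$ is regular modulo $I_k$ and hence $(I_k\colon h_k^\infty)=I_k=I(X_k)$.

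With these choices, Theorem~\ref{vila-12-12-12}(ii) states that
\begin{equation*}
{\rm reg}\, K[E_G]/(I(X_1)+\cdots+I(X_c)+\mathcal{I}\colon h^\infty)=\sum_{k=1}^c{\rm reg}\, K[E_{G_k}]/I(X_k)+(c-1)(q-2),
\end{equation*}
where $\mathcal{I}=(\{t_i^{q-1}-t_j^{q-1}\mid t_i,t_j\in E_G\})$ and $h=t_1\cdots t_s$. Thus the theorem reduces to the single identity $I(X)=(I(X_1)+\cdots+I(X_c)+\mathcal{I}\colon h^\infty)$. I would prove this by a squeeze. Let $P$ be the toric ideal of the edge subring of $G$ (the kernel of $t_i\mapsto y^{v_i}$) and $P_k$ that of the block $G_k$. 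The chain of inclusions $P+\mathcal{I}\subseteq I(X_1)+\cdots+I(X_c)+\mathcal{I}\subseteq I(X)$ holds because $P=P_1+\cdots+P_c$ with $P_k\subseteq I(X_k)$, because each $I(X_k)$ is contained in $I(X)$ (the $E_{G_k}$-coordinates of a point of $X$ constitute a point of $X_k$, and $I(X_k)$ is homogeneous), and because $\mathcal{I}\subseteq I(X)$. Saturating with respect to $h$ and using that $I(X)$, being a lattice ideal, is already saturated, the outer terms become $(P+\mathcal{I}\colon h^\infty)$ and $I(X)$; by Proposition~\ref{dec30-12} these two coincide, so the middle saturation is squeezed to $I(X)$, giving the identity.

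The main obstacle is the decomposition $P=P_1+\cdots+P_c$ of the toric ideal over the blocks. This is precisely where the bipartiteness of $G$ is used: for a bipartite graph the toric ideal of the edge subring is generated by the binomials attached to the even cycles of $G$, and every cycle, being a $2$-connected subgraph, is contained in a single block. Hence each such generator lies in $K[E_{G_k}]$ for some $k$, which yields $P=\sum_k P_k$ and, after adjoining $\mathcal{I}$, the needed containment $P+\mathcal{I}\subseteq\sum_k I(X_k)+\mathcal{I}$. Once this combinatorial reduction is in place, Theorem~\ref{vila-12-12-12}(ii) delivers the stated formula verbatim, the constant being $(c-1)(\ell-1)=(c-1)(q-2)$.
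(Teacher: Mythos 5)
Your proposal is correct and follows essentially the same route as the paper: both reduce to Theorem~\ref{vila-12-12-12} with $\ell=q-1$ via the block decomposition $P=P_1+\cdots+P_c$ of the toric ideal and the saturation identity $(P+\mathcal{I}\colon h^\infty)=I(X)$; the paper simply takes $I_k=P_k+\mathcal{I}_k$ so that $I_1+\cdots+I_c+\mathcal{I}$ equals $P+\mathcal{I}$ on the nose, whereas your choice $I_k=I(X_k)$ requires the extra (valid) squeeze. One small citation point: the identity $(P+\mathcal{I}\colon h^\infty)=I(X)$ should be attributed to \cite[Corollary~2.11]{algcodes} (as in the paper's proof), not to Proposition~\ref{dec30-12}, whose hypotheses exclude the case where $G$ is a forest.
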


\begin{proof} We denote the set of all 
characteristic vectors of the edges of $G$ by
$\mathcal{A}=\{v_1,\ldots,v_s\}$. Let $P$ be the toric ideal of
$K[\{y^{v}\vert v\in\mathcal{A}\}]$, let $\mathcal{A}_k$ be the set of
characteristic vectors of the edges of $G_k$ and let $P_k$ be the toric ideal 
of $K[\{y^{v}\vert v\in\mathcal{A}_k\}]$. The toric ideal $P$ is the
kernel of the epimorphism of $K$-algebras
\[ 
\varphi\colon S=K[t_1,\ldots,t_s] \longrightarrow K[\{y^{v}\vert v\in\mathcal{A}\}], \ \ \ 
t_i\longmapsto y^{v_i}.
 \]
Permitting an abuse of notation, we may denote the edges of $G$ by
$t_1,\ldots,t_s$. As $G$ is a bipartite graph and
$E_{G_i}\cap E_{G_j}=\emptyset$ for $i\neq j$, from 
\cite[Proposition 3.1]{Vi3}, it follows that $P=P_1+\cdots+P_c$. 
Setting 
$$\mathcal{I}=(\{t_i^{q-1}-t_j^{q-1}\vert\, t_i,t_j\in E_G\})
\ \mbox{ and }\ \mathcal{I}_k=(\{t_i^{q-1}-t_j^{q-1}\vert\,
t_i,t_j\in E_{G_k}\}),
$$
by \cite[Corollary 2.11]{algcodes}, we get
$$
\textstyle((P+\mathcal{I})\colon \prod_{t_i\in E_G}t_i)=I(X)\ \mbox{
and }\ \textstyle((P_k+\mathcal{I}_k)\colon \prod_{t_i\in
E_{G_k}}t_i)=I(X_k).$$
Therefore the formula for the regularity follows from
Theorem~\ref{vila-12-12-12}.
\end{proof}

This result is interesting because it reduces the computation of the
regularity to the case of $2$-connected bipartite graphs. 
Next, we compare the ideals $I(X)$ and $I=P+\mathcal{I}$, where
$\mathcal{I}$ is the ideal $(\{t_i^{q-1}-t_j^{q-1}\vert\, 
t_i,t_j\in E_{G}\})$, and relate the
regularity of $I(X)$ with the Hilbert function of $S/I$ and the
primary decompositions of $I$. 

\begin{proposition}\label{dec30-12} Let $G$ be a bipartite graph which is not a
forest and let $P$ be the toric ideal of $K[y^{v_1},\ldots,y^{v_s}]$.
If $I=P+\mathcal{I}$ and $X$ is the projective algebraic toric set parameterized
by the edges of $G$, then the following hold\/{\rm:}
\begin{itemize}
\item[\rm(a)]  $I\subsetneq I(X)$ and $I$ is not unmixed.

\item[\rm(b)] There is a minimal primary decomposition 
$I=\mathfrak{p}_1\cap\cdots\cap\mathfrak{p}_m\cap\mathfrak{q}'$, where
$\mathfrak{p}_1,\ldots,\mathfrak{p}_m$ are prime ideals such 
that $I(X)=\mathfrak{p}_1\cap\cdots\cap\mathfrak{p}_m$ and
$\mathfrak{q}'$ is an $\mathfrak{m}$-primary ideal. Here
$\mathfrak{m}$ denotes the maximal ideal $(t_1,\ldots,t_s)$.  
\item[\rm(c)] If $t^a\in\mathfrak{q}'$, then $\mathfrak{q}:=I+(t^a)$
is $\mathfrak{m}$-primary, $(I\colon t^a)=I(X)$ and
$I=I(X)\cap\mathfrak{q}$.   
\item[\rm(d)] If $i_0$ is the least integer $i\geq |a|$ such
that $H_I(i)-H_\mathfrak{q}(i)=|X|$, then ${\rm reg}\,
S/I(X)=i_0-|a|$.
 \end{itemize}
\end{proposition}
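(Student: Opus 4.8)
The plan is to base everything on the single identity $I(X)=(I\colon h^\infty)$ with $h=t_1\cdots t_s$, which is what \cite[Corollary~2.11]{algcodes} supplies and which is already invoked in the proof of Theorem~\ref{maria-jorge-vila-12-12-12}; this gives $I\subseteq I(X)$ for free. First I would pin down the associated primes of $I$. Since $V(I,t_i)=\{0\}$ for every $i$ (the binomial $t_i^{q-1}-t_j^{q-1}$ forces all coordinates to vanish once one does), Lemma~\ref{apr24-12-1}(b) shows that the only associated prime of $I$ containing a variable is $\mathfrak{m}=(t_1,\dots,t_s)$; every other associated prime avoids $h$ and so survives saturation, hence is one of the minimal primes $\mathfrak{p}_1,\dots,\mathfrak{p}_m$ of $I(X)$. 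Because $I(X)$ is a radical Cohen--Macaulay ideal of dimension $1$ (Theorem~\ref{ipn-ufpe-cinvestav}), these $\mathfrak{p}_i$ are exactly the vanishing ideals of the points of $X$, and the $\mathfrak{p}_i$-primary component of $I$ coincides with that of $(I\colon h^\infty)=I(X)$, namely $\mathfrak{p}_i$ itself. Thus ${\rm Ass}(S/I)\subseteq\{\mathfrak{p}_1,\dots,\mathfrak{p}_m,\mathfrak{m}\}$ and $I=\mathfrak{p}_1\cap\cdots\cap\mathfrak{p}_m\cap\mathfrak{q}'$ with $\mathfrak{q}'$ either absent or $\mathfrak{m}$-primary. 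This is the skeleton of (b) and reduces (a) to showing that $\mathfrak{q}'$ is genuinely present, i.e. $\mathfrak{m}\in{\rm Ass}(S/I)$.

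For part (a) the hypothesis that $G$ is not a forest enters: $G$ then contains an even cycle, whose edges give a binomial $g=m_1-m_2\in P\subseteq I$ with $m_1,m_2$ squarefree of equal degree and disjoint support. The hard part will be to manufacture from $g$ and the power binomials $t_i^{q-1}-t_j^{q-1}$ an element $f\notin I$ that is annihilated modulo $I$ by every $t_i$, i.e. a nonzero socle element of $S/I$. Its existence yields $\mathfrak{m}\in{\rm Ass}(S/I)$, so ${\rm depth}(S/I)=0$, $I$ is not saturated, and therefore $I\subsetneq(I\colon h^\infty)=I(X)$ while $I$ is not unmixed (it has the height-$(s-1)$ primes $\mathfrak{p}_i$ and the height-$s$ prime $\mathfrak{m}$). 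I expect this construction to be the main obstacle, since it is the only non-formal step; once $\mathfrak{q}'\neq S$ is known, the minimal primary decomposition in (b) is immediate, with $\bigcap_i\mathfrak{p}_i=I(X)$ and $\mathfrak{q}'$ the $\mathfrak{m}$-primary embedded component.

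Part (c) is then a chain of colon computations. Fix a monomial $t^a\in\mathfrak{q}'$. Since each $\mathfrak{p}_i$ is the ideal of a point of the torus, $S/\mathfrak{p}_i$ is a polynomial ring in one variable and $t^a\notin\mathfrak{p}_i$, so $\mathfrak{p}_i+(t^a)$ is $\mathfrak{m}$-primary; as every prime over $\mathfrak{q}:=I+(t^a)$ contains $I$ and hence some $\mathfrak{p}_i$, we get $\sqrt{\mathfrak{q}}=\mathfrak{m}$, whence $\mathfrak{q}$ is $\mathfrak{m}$-primary. Because $t^a$ is a nonzerodivisor modulo the radical ideal $I(X)$ we have $(I(X)\colon t^a)=I(X)$, and writing $I=I(X)\cap\mathfrak{q}'$ with $t^a\in\mathfrak{q}'$ gives
\[
(I\colon t^a)=(I(X)\colon t^a)\cap(\mathfrak{q}'\colon t^a)=I(X)\cap S=I(X).
\]
Finally $I=I(X)\cap\mathfrak{q}$: the inclusion $\subseteq$ is clear, and for $\supseteq$ one writes $f=g+ct^a\in I(X)\cap\mathfrak{q}$ with $g\in I$, deduces $c\in(I(X)\colon t^a)=I(X)$, and concludes $ct^a\in I(X)\cap\mathfrak{q}'=I$, so $f\in I$.

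For part (d) I would feed $I=I(X)\cap\mathfrak{q}$ into the Mayer--Vietoris sequence
\[
0\longrightarrow S/I\longrightarrow S/I(X)\oplus S/\mathfrak{q}\longrightarrow S/(I(X)+\mathfrak{q})\longrightarrow 0,
\]
and note $I(X)+\mathfrak{q}=I(X)+(t^a)$. Multiplication by the regular element $t^a$ on the Cohen--Macaulay dimension-one ring $S/I(X)$ gives $H_{I(X)+(t^a)}(d)=H_{I(X)}(d)-H_{I(X)}(d-|a|)$, and combining the two Hilbert-series identities collapses to
\[
H_I(d)-H_{\mathfrak{q}}(d)=H_{I(X)}(d-|a|).
\]
Since $S/I(X)$ is Cohen--Macaulay of dimension $1$ in the standard grading, its regularity equals its index of regularity, i.e. the least $R$ with $H_{I(X)}(j)=|X|$ for all $j\ge R$ (the Hilbert function being nondecreasing up to the value $|X|=\deg(S/I(X))$). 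Hence $H_I(i)-H_{\mathfrak{q}}(i)=|X|$ holds exactly when $i-|a|\ge R$, so the least $i\ge|a|$ satisfying it is $i_0=|a|+R$, giving ${\rm reg}(S/I(X))=R=i_0-|a|$, as claimed.
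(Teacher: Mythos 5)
Your parts (b), (c) and (d) are essentially correct and track the paper's argument closely (the paper uses the short exact sequence $0\to (S/I(X))[-|a|]\xrightarrow{\,t^a\,} S/I\to S/\mathfrak{q}\to 0$ directly from $I=I(X)\cap\mathfrak{q}$ and $(I\colon t^a)=I(X)$, where your Mayer--Vietoris computation lands in the same Hilbert-function identity). But part (a) contains a genuine gap, and it is the load-bearing step: you never actually produce the socle element you need. You explicitly defer "the hard part" of manufacturing, from the even-cycle binomial and the binomials $t_i^{q-1}-t_j^{q-1}$, an element $f\notin I$ with $\mathfrak{m}f\subseteq I$. Without that, you have established neither $I\subsetneq I(X)$ nor $\mathfrak{m}\in{\rm Ass}(S/I)$, and everything in (a) and the existence of the embedded component $\mathfrak{q}'$ in (b) collapses. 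This is not a routine verification: the strict inclusion $I\subsetneq I(X)$ for a bipartite graph containing an even cycle of length at least $4$ is itself a theorem (the paper cites \cite[Theorem 5.9]{evencycles} for exactly this), so it cannot be waved through as a construction "to be done".

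Note also that the paper's logic runs in the opposite, and easier, direction from yours. It first gets $I\subsetneq I(X)$ from the external result, and then derives non-unmixedness by contradiction: if $I$ were unmixed, Lemma~\ref{apr24-12-1} would force every $t_i$ to be a nonzerodivisor on $S/I$, hence $I=(I\colon h^\infty)=I(X)$, contradicting the strict inclusion. You instead try to prove $\mathfrak{m}\in{\rm Ass}(S/I)$ directly and deduce $I\subsetneq I(X)$ from it; that implication is fine, but it makes the unproved socle construction indispensable rather than avoidable. If you import $I\subsetneq I(X)$ as a known fact (as the paper does), the contradiction argument gives you non-unmixedness for free and your proof of (b)--(d) then goes through.
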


\begin{proof} (a): Since $G$ has at least one even cycle of length at
least $4$, using \cite[Theorem 5.9.]{evencycles} it follows that
$I\subsetneq I(X)$. To show that $I$ is not unmixed, we proceed by
contradiction. Assume that $I$ is unmixed, i.e., all associated
primes of $I$ have height $s-1$. Then, by Lemma~\ref{apr24-12-1}, 
$I$ is a lattice ideal, i.e., $I$ is equal to $(I\colon(t_1\cdots t_s)^\infty)$, a
contradiction because $G$ is bipartite and according to
\cite[Corollary 2.11]{algcodes} one 
has $(I\colon(t_1\cdots t_s)^\infty)=I(X)$.

(b): As $I$ is graded, by (a), there is a minimal primary
decomposition
$I=\mathfrak{q}_1\cap\cdots\cap\mathfrak{q}_m\cap\mathfrak{q}'$, where
$\mathfrak{q}_i$ is $\mathfrak{p}_i$-primary of height $s-1$ for all
$i$ and $\mathfrak{q}'$ is $ \mathfrak{m}$-primary. By
Lemma~\ref{apr24-12-1}, $\mathfrak{q}_i$ is a lattice ideal for all
$i$. Hence
$$
I(X)=(I\colon(t_1\cdots
t_s)^\infty)=\mathfrak{q}_1\cap\cdots\cap\mathfrak{q}_m.
$$
As $I(X)$ is a radical ideal, so is $\mathfrak{q}_i$ for
$i=1,\ldots,m$, i.e., $\mathfrak{q_i}=\mathfrak{p}_i$ for all $i$.

(c): Let
$I=\mathfrak{p}_1\cap\cdots\cap\mathfrak{p}_m\cap\mathfrak{q}'$ be a
minimal primary decomposition as in $(b)$. Pick any monomial $t^a$ in 
$\mathfrak{q}'$. Then, by Lemma~\ref{apr24-12-1},
$\mathfrak{q}=I+(t^a)$ is $\mathfrak{m}$-primary and 
$$
(I\colon
t^a)=(\mathfrak{p}_1\colon t^a)\cap\cdots\cap (\mathfrak{p}_m\colon
t^a)\cap (\mathfrak{q}'\colon t^a)
=\mathfrak{p}_1\cap\cdots\cap\mathfrak{p}_m=I(X).
$$
From the equality $(I\colon t^a)=I(X)$, it follows 
readily that $I=I(X)\cap\mathfrak{q}$.

(d): Let $t^a$ be any monomial of $\mathfrak{q}'$ and let
$\ell=\deg(t^a)$. If $\mathfrak{q}=I+(t^a)$, by (c), there is an
exact sequence
$$
0\longrightarrow S/I(X)[-\ell]\stackrel{t^a}{\longrightarrow}
S/I\longrightarrow S/\mathfrak{q}\longrightarrow 0.
$$
Hence, by the additivity of Hilbert functions,
$H_X(i-\ell)=H_I(i)-H_\mathfrak{q}(i)$ for $i\geq 0$. Since $I(X)$ is
Cohen-Macaulay of dimension $1$, ${\rm reg}\, S/I(X)$ is equal to the
index of regularity of $S/I(X)$. Thus, ${\rm reg}(S/I(X))$, is the
least integer $r\geq 0$ such that $H_X(d)=|X|$ for $d\geq r$. Thus,
$r=i_0-|a|$.   
\end{proof}

\begin{theorem}{\rm(\cite{evencycles},
\cite{rs-codes})}\label{upper-lower-bounds-reg-bip}  
Let $G$ be a connected bipartite graph with
bipartition $(V_1,V_2)$ and let $X$ be the projective algebraic toric
set parameterized by 
the edges of $G$. If $|V_2|\leq|V_1|$, then 
$$
(|V_1|-1)(q-2)\leq{\rm reg}\, S/I(X)\leq (|V_1|+|V_2|-2)(q-2).
$$
Furthermore, equality on the left occurs if $G$ is a complete bipartite
graph or if $G$ is a Hamiltonian graph and equality on the right
occurs if $G$ is a tree. 
\end{theorem}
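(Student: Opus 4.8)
The plan is to combine the block additivity of Theorem~\ref{maria-jorge-vila-12-12-12} with bounds for the regularity of a single two-connected bipartite graph, the latter being the content of the cited references \cite{evencycles,rs-codes}. First I would record the two facts that make the problem one-dimensional: by Theorem~\ref{ipn-ufpe-cinvestav} the ideal $I(X)$ is a Cohen-Macaulay lattice ideal of dimension $1$, so that ${\rm reg}\, S/I(X)$ coincides with the index of regularity ${\rm r}(S/I(X))$, i.e. with the least integer $\ell$ such that $H_X(d)=|X|$ for all $d\geq\ell$. This reformulation is what both bounds will ultimately control.

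For the upper bound I would pass to the blocks $G_1,\ldots,G_c$ of $G$ and apply Theorem~\ref{maria-jorge-vila-12-12-12}, reducing the estimate to a bound on each ${\rm reg}\, K[E_{G_k}]/I(X_k)$. Writing $n_k=|V(G_k)|$, a block is either a bridge, in which case $X_k$ is a point of $\mathbb{P}^0$ and ${\rm reg}=0=(n_k-2)(q-2)$, or a two-connected bipartite graph, for which I would invoke the bound ${\rm reg}\, K[E_{G_k}]/I(X_k)\leq(n_k-2)(q-2)$ from \cite{rs-codes}. The conclusion then follows from the block-tree counting identity $\sum_{k=1}^c(n_k-1)=|V_G|-1$: since $\sum_k(n_k-2)+(c-1)=|V_G|-2$, Theorem~\ref{maria-jorge-vila-12-12-12} yields ${\rm reg}\, K[E_G]/I(X)\leq(|V_G|-2)(q-2)=(|V_1|+|V_2|-2)(q-2)$.

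For the lower bound I would show that $H_X$ has not yet stabilized below degree $(|V_1|-1)(q-2)$, i.e. that $H_X(d)<|X|$ for $d=(|V_1|-1)(q-2)-1$; this amounts to showing that the degree-$d$ evaluation map onto $K^{|X|}$ is not surjective, which is the combinatorial input furnished by \cite{evencycles,rs-codes}. The equality statements I would then dispatch case by case. If $G$ is a tree then every block is a bridge and $c=|V_G|-1$, so the upper-bound computation above is forced to be an equality, giving ${\rm reg}=(|V_G|-2)(q-2)=(|V_1|+|V_2|-2)(q-2)$. If $G$ is complete bipartite then $c=1$, the correction term in Theorem~\ref{maria-jorge-vila-12-12-12} vanishes, and the value ${\rm reg}=(|V_1|-1)(q-2)$ is read off from the explicit regularity of $I(X)$ for $K_{|V_1|,|V_2|}$. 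If $G$ is Hamiltonian then $|V_1|=|V_2|=:a$ and $G$ contains a spanning even cycle $C_{2a}$ whose regularity equals $(a-1)(q-2)$; matching this against the general lower bound, together with the corresponding upper estimate for Hamiltonian graphs from \cite{rs-codes}, forces equality.

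The main obstacle I anticipate is the per-block estimate for a two-connected bipartite graph, which is exactly where the external results \cite{evencycles,rs-codes} enter and which does not reduce further through Theorem~\ref{maria-jorge-vila-12-12-12}; a secondary subtlety is tracking how the global bipartition $(V_1,V_2)$ restricts to the individual blocks, so that the summed estimates assemble correctly into the stated quantities $(|V_1|-1)(q-2)$ and $(|V_1|+|V_2|-2)(q-2)$ rather than some larger or smaller bound.
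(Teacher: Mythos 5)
The first thing to observe is that the paper does not prove this theorem at all: it carries the citation (\cite{evencycles}, \cite{rs-codes}) in its header and is imported as a known result, used only in the closing remark that it can be combined with Theorem~\ref{maria-jorge-vila-12-12-12} to bound the regularity for arbitrary bipartite graphs. So there is no internal argument to compare yours against, and any ``proof'' here must either reproduce the arguments of the two references or be honest that it is only a reduction.

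Read as a proof, your proposal has exactly that problem: every substantive estimate is outsourced to the same two references that the theorem cites. The per-block bound $\mathrm{reg}\,K[E_{G_k}]/I(X_k)\le(n_k-2)(q-2)$ for a $2$-connected block is precisely the upper bound of the theorem applied to $G_k$; the lower bound is described only as ``the combinatorial input furnished by the references''; and the complete bipartite and Hamiltonian equality cases are again read off from the references. What is genuinely yours is the reduction of the connected case to the $2$-connected case via Theorem~\ref{maria-jorge-vila-12-12-12} together with the identity $\sum_k(|V_{G_k}|-1)=|V_G|-1$; this is correct, matches the paper's own remark that the block formula reduces computations to $2$-connected graphs, and does yield the tree case of the right-hand equality for free (all blocks are bridges of regularity $0$ and $c=|V_G|-1$). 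But it is not needed for the theorem as stated, since \cite{rs-codes} proves the bounds for connected bipartite graphs directly. Two further points: your Hamiltonian argument does not close as written --- the spanning cycle $C_{2a}$ with regularity $(a-1)(q-2)$ gives (via monotonicity under subgraphs) only the lower bound you already have, and the matching upper bound for Hamiltonian graphs is once more an external input rather than something ``forced'' by your setup; and the claim ``$c=1$ for complete bipartite'' fails for stars $K_{1,n}$, though that case is harmlessly absorbed by the tree case.
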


For an arbitrary bipartite graph, Theorems~\ref{maria-jorge-vila-12-12-12} and
\ref{upper-lower-bounds-reg-bip} can be used to bound the regularity
of $I(X)$. 

\medskip

{\bf Acknowledgments.} 
The authors would like to thank the referee for its 
careful reading of the paper and for showing us the right formulation
of Lemma~\ref{jul8-12} and a short proof of this lemma. 

\bibliographystyle{plain}

\end{document}